\documentclass[12pt]{amsart}
\usepackage{a4ams}
\usepackage{amsfonts}
\usepackage{amssymb}
\usepackage[arrow,matrix,curve]{xy}
\usepackage{ifthen}

\usepackage[active]{srcltx}





\sloppy


\newtheorem{theorem}{Theorem}[section]
\newtheorem{definition}[theorem]{Definition}

\newtheorem{lemma}[theorem]{Lemma}
\newtheorem{proposition}[theorem]{Proposition}
\newtheorem{corollary}[theorem]{Corollary}

\newcommand{\N}{{\mathbb N}}

\newcommand{\Z}{{\mathbb Z}}
\newcommand{\F}{{\mathbb F}}

\newcommand{\C}{{\mathbb C}}

\newcommand{\T}{{\mathbb T}}

\renewcommand{\P}{{\mathbb P}}

\def\cal{\mathcal}

\newcommand{\cala}{{\cal A}}
\newcommand{\calo}{{\cal O}}

\newcommand{\calb}{{\cal B}}
\newcommand{\calp}{{\cal P}}
\newcommand{\calh}{{\cal H}}

\newcommand{\calt}{{\cal T}}

\newcommand{\cals}{{\cal S}}

\newcommand{\calq}{{\cal Q}}
\newcommand{\calx}{{\cal X}}

\newcommand{\alg}[0]{ {\rm Alg} }

\newcommand{\ring}{ {\rm Ring} }

\newcommand{\aut}[0]{ \mbox{\rm{Aut}}}

\newcommand{\set}[2]{\{ \, #1 \,\, | \, \, #2 \, \} }

\newcommand{\lin}[0]{ \mbox{\rm{span}}}

\begin{document}

\title{On freely generated semigraph $C^*$-algebras}
\author[B. Burgstaller]{Bernhard Burgstaller}
\address{Doppler Institute for mathematical physics, Trojanova 13, 12000 Prague, Czech Republic}
\email{bernhardburgstaller@yahoo.de}
\subjclass{46L05, 46L80}
\keywords{higher rank, graph algebra, semigraph, ultragraph, labelled graph, $K$-theory}
%

\begin{abstract}
For special
universal $C^*$-algebras associated to $k$-semigraphs
we present the universal representations of these algebras, prove a Cuntz--Krieger uniqueness theorem, and compute the $K$-theory.
These $C^*$-algebras seem to be the most universal Cuntz--Krieger like algebras naturally associated to $k$-semigraphs.
For instance, the Toeplitz Cuntz algebra is a proper quotient of such an algebra.
\end{abstract}

\maketitle


\section{Introduction}

In this paper we continue our study of higher rank semigraph $C^*$-algebras from \cite{burgiSemigraphI}.
The class of higher rank semigraph $C^*$-algebras present a flexible generalisation of Tomforde's ultragraph algebras \cite{tomforde}
or $C^*$-algebras of labelled graphs \cite{batespask} to higher rank
structures.

%

Let $T$ be a $k$-semigraph.
We define a special semigraph $C^*$-algebra $\calq(T)$ called a freely generated semigraph $C^*$-algebra.
It is generated by $T$ as a subset of partial isometries of $\calq(T)$, thereby satisfying only a minimum
set of relations such that we can still speak of a generalised Cuntz--Krieger algebra.
Notably, a Cuntz--Krieger uniqueness theorem holds.
We find a concrete faithful representation of $\calq(T)$ on a Hilbert space as a left regular representation of a semimultiplicative
set in Proposition \ref{PropRepresVarphi}.
A freely generated semigraph algebra is free, weakly free, cancelling, and satisfies a Cuntz--Krieger theorem, see Theorem \ref{TheoremSemigraphCStarToeplitz}.
We compute the $K$-theory of $\calq(T)$
in Theorem \ref{TheoremKTheory}.
The proof is just an application of
a theorem in \cite{burgiKTheoryGaugeActions}.



Every semigraph $C^*$-algebra is generated by partial isometries with commuting source and range projections (see Lemma \ref{semigraphwordrep}).
%
There is some recent interest in universal $C^*$-algebras generated by partial isometries and their $K$-theory,
see for instance Cho and Jorgensen \cite{chojorgensen} and Brenken and Niu \cite{brenkenniu}.
The strongest motivation for considering a freely generated semigraph algebra is that it seems to be somehow the freest Cuntz--Krieger
like algebra which naturally contains a given $k$-graph, see also Lemma \ref{lemmaExampleQuell} which further justifies this.
Moreover, we think every universal algebra generated by partial isometries for which one knows the $K$-theory is a benefit;
even more so, as there is (despite \cite{1181.46047}) an ongoing program initiated by G. Elliott to classify certain
subclasses of nuclear $C^*$-algebras (like the subclass of purely infinite nuclear $C^*$-algebras) where $K$-theory plays a major role.


We give a brief overview of this paper.
In Section \ref{sectionSemigraphAlgebra} we recall the theory of semigraph algebras.
In Section \ref{SectionFreeSemiAlgebra} we consider two conditions for a semigraph algebra
which we call weakly free (Definition \ref{defKToeplitz}) and free (Definition \ref{defToeplitz}).
Freeness implies weakly freeness (Proposition \ref{PropToeplitzKToeplitz}),
and free semigraph algebras are cancelling (Proposition \ref{PropToeplitzAmenability}), and so satisfy a Cuntz--Krieger uniqueness
theorem.
In Section \ref{SectionQuellSemigraph} we introduce freely generated semigraph algebras (Definition \ref{ToeplitzgrahDef}) and show
that they are free and cancelling (Theorem \ref{TheoremSemigraphCStarToeplitz}).
In Section \ref{SectionKTheoryWeaklyFreeSemigraph} we explicitly compute the $K$-theory of weakly free semigraph
$C^*$-algebras in Theorem \ref{TheoremKTheory}.
In Section \ref{SectionKTheoryQuellSemigraph} we use Theorem \ref{TheoremKTheory} for the computation of the $K$-theory of a freely generated semigraph $C^*$-algebra in Theorem \ref{theoremKTheoryQuell}.
In Section \ref{SectionConcludingRemarks} we give some final remarks, including examples and a discussion which known results are generalised by the paper.

\section{Semigraph algebras}  \label{sectionSemigraphAlgebra}


In this section we recall briefly the definition and some basic facts about semigraph algebras \cite{burgiSemigraphI}
for further reference.
%


%

\begin{definition}  \label{definitionSemimultiSet}
{\rm
A {\em semimultiplicative set} $T$ is a set equipped with
a subset $T^{\{2\}} \subseteq T \times T$ and a multiplication
$T^{\{2\}} \longrightarrow T : (s,t) \mapsto st$,
which is associative, that is, for all $s,t,u \in T$,
$(s t) u$ is defined if and only if $s (t u)$ is defined, and
both expressions are equal if they are defined.
}
\end{definition}

\begin{definition}  \label{Defsemigraph}
{\rm
Let $k$ be an index set (which may be regarded as a natural number if $k$ is finite).
A {\em $k$-semigraph} $T$ is a semimultiplicative set $T$ equipped with a degree map
$d:T \longrightarrow \N_0^k$ satisfying the {\em unique
factorisation property} which consists of the following two conditions:

(1) For all $x,y \in T$ for which the product $xy$ is defined one has
$d(xy)= d(x)+d(y)$.

(2) For all $x \in T$ and all $n_1,n_2 \in \N_0^k$ with $d(x)
= n_1+n_2$ there exist unique $x_1,x_2 \in T$ with $x = x_1 x_2$ satisfying $d(x_1)=n_1$ and $d(x_2)=n_2$.
%
}
\end{definition}

We call a $k$-semigraph also a higher rank semigraph or just a semigraph.
We shall also write $|t|$ rather than $d(t)$ for elements $t$ in a $k$-semigraph.
Let $T$ be a $k$-semigraph. We denote the set of all elements of $T$ with degree $n$
by $T^{(n)}$ ($n \in \N_0^k$).
If $x \in T$ and $0 \le n_1
\le n_2 \le d(x)$ then there are unique $x_1,x_2,x_3 \in T$ such that $x = x_1
x_2 x_3$, $d(x_1)=n_1, d(x_2)= n_2-n_1$ and $d(x_3) = d(x)-n_2$.
The element $x_2$ is denoted by $x(n_1,n_2)$.

\begin{definition}
{\rm
Let $T$ be a $k$-semigraph.
For $s,t \in T$ we write $s \le t$ if $\alpha s=t$ for some $\alpha \in T$.
%
}
\end{definition}

\begin{lemma}   \label{lemmaOrderrel}
The last relation is an order relation on a semigraph.
\end{lemma}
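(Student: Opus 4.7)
The plan is to verify in turn the three axioms of an order relation: reflexivity, transitivity, and antisymmetry. The main tool throughout will be the unique factorisation property from Definition \ref{Defsemigraph}, together with the basic fact about $k$-semigraphs that every element $s \in T$ admits a canonical decomposition $s = r(s)\, s$ with $r(s) \in T^{(0)}$ a degree-$0$ element (a ``range vertex''). This fact is not explicitly stated in the excerpt, but it is a direct consequence of applying unique factorisation with $n_1 = 0$ and $n_2 = d(s)$, combined with the standard semigraph theory of \cite{burgiSemigraphI}.

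For reflexivity, given $s \in T$, I would take $\alpha = r(s) \in T^{(0)}$ from the canonical decomposition $s = r(s)\, s$. Then $\alpha s = s$ is defined and equals $s$, so $s \le s$.

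For transitivity, suppose $s \le t$ and $t \le u$, witnessed by $\alpha s = t$ and $\beta t = u$. Then $\beta (\alpha s) = \beta t = u$ is defined, and by the associativity axiom of a semimultiplicative set (Definition \ref{definitionSemimultiSet}), $(\beta \alpha) s$ is also defined and equals $u$. In particular $\beta \alpha \in T$, and this element witnesses $s \le u$.

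The substantive step is antisymmetry. If $s \le t$ and $t \le s$, write $\alpha s = t$ and $\beta t = s$. Applying the degree map and using condition (1) of Definition \ref{Defsemigraph}, we get $d(\alpha) + d(s) = d(t)$ and $d(\beta) + d(t) = d(s)$, hence $d(\alpha) + d(\beta) = 0$ in $\N_0^k$. Since $d$ takes values in $\N_0^k$, this forces $d(\alpha) = d(\beta) = 0$ and therefore $d(s) = d(t)$. Now the equation $t = \alpha s$ exhibits a factorisation of $t$ into a degree-$0$ piece and a piece of degree $d(t)$; by the uniqueness clause (2) of the factorisation property applied to $t$ with $n_1 = 0$ and $n_2 = d(t)$, this factorisation agrees with the canonical one $t = r(t)\, t$, giving $s = t$. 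The only delicate point, and thus the main obstacle to writing out the argument cleanly, is invoking the identification of the ``right factor'' in the degree-$(0,d(t))$ decomposition with $t$ itself; for this one relies on the standard structural lemmas for $k$-semigraphs developed in \cite{burgiSemigraphI}.
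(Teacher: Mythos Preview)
Your proof is correct and follows essentially the same approach as the paper: reflexivity via a degree-$0$ left factor, transitivity by associativity, and antisymmetry by first forcing $d(\alpha)=d(\beta)=0$ with the degree map and then invoking unique factorisation. The only variation is in the last step: you compare $t=\alpha s$ against the canonical decomposition $t=r(t)\,t$, whereas the paper substitutes $s=\beta\alpha s$ into itself to obtain $\beta\alpha s=\beta\alpha\beta\alpha s$ and then compares the two $(0,d(s))$-factorisations $(\beta\alpha)\cdot s$ and $(\beta\alpha\beta)\cdot(\alpha s)$ to conclude $s=\alpha s=t$ --- this route sidesteps the appeal to $t=r(t)\,t$ that you flag as delicate, though that same fact is already used (by both you and the paper) for reflexivity, so the difference is cosmetic.
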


\begin{proof}
The relation is reflexive since $d(x)=0+d(x) \Rightarrow x=sx$ for some $s \in T$.
Transitivity is clear. So
assume $s \le t$ and $t \le s$. Then there are $\alpha,\beta \in T$ such that
$t = \alpha s$ and $s = \beta t$. Then $s = \beta \alpha s$, and so $d(s)= d(s) + d(\alpha)+ d(\beta)$,
which implies $d(\alpha)=d(\beta)=0$.
Hence one has $\beta \alpha s = \beta \alpha \beta \alpha s$. By the unique factorisation property
$s = \alpha s$. So $s =t$.
\end{proof}


\begin{definition}
{\rm A $k$-semigraph $T$ is called {\em finitely aligned} if for all
$x,y \in T$ the
{\em minimal common extension} of $x$
and $y$, which is the set
\begin{eqnarray*}
T^{(\min)}(x,y) &=& \{(\alpha,\beta) \in T \times T |\,
\mbox{$x \alpha$ and $y \beta$ are defined},\\
&& x \alpha = y \beta,\, d(x \alpha) =
d(x) \vee d(y) \},
\end{eqnarray*}
is finite. }
\end{definition}
It might help to recall the meaning of the relation $(\alpha,\beta) \in T^{\rm (min)}(x,y)$ by visualising it by the tautologies
$\alpha \le x \alpha$ and $\beta \le y \beta$.

\begin{definition}   \label{defNonunitalSemigraph}
{\rm
$T$ is called a {\em non-unital} $k$-semigraph if there exists a $k$-semigraph $T_1$ which has a unit $1 \in T_1$ such that
$T= T_1 \backslash \{1\}$.
}
\end{definition}

We shall use the following notions when we speak about algebras.
A {\em $*$-algebra} means an algebra
over $\C$ endowed with an involution. An element $s$ in a
$*$-algebra is called a {\em partial isometry} if $s s^* s= s$,
and a {\em projection} $p$ is an element with $p = p^2 = p^*$.
We define $P_a := a a^*$ and $Q_a := a^* a$ for elements $a$ of a $*$-algebra.
If $s$ is a partial isometry then $P_s$ and $Q_s$ are called the {\em range} and {\em source projection} of $s$, respectively.
If $I$ is a subset of a $*$-algebra then $\langle I \rangle$ denotes the self-adjoint two-sided
ideal generated by $I$ in this $*$-algebra.

Assume that we are given a set $\calp$ and a non-unital $k$-semigraph $\calt$ with $\calp \cap \calt = \emptyset$.
Recall that in a non-unital $k$-semigraph one has $d(t)>0$ for all its elements $t$ (because $t 1 = 1 t$, and thus
the unique factorization property implies $t=1$ if $d(t)=0$, see the last paragraph in \cite[Section 3]{burgiSemigraphI}).
We denote by $\calt_1 = \calt \sqcup \{1\}$ the unital $k$-semigraph of Definition \ref{defNonunitalSemigraph}.
Define $\F$ to be the free non-unital $*$-algebra generated by $\calp \cup \calt$.
We call a $*$-monomial $a_1^{\epsilon_1} \ldots a_n^{\epsilon_n}$ with letters $a_i$ in $\calp \cup \calt$ and exponents
$\epsilon_i \in \{1,*\}$ a word (in $\F$ or in a quotient of $\F$).

\begin{definition} \label{degreeMap}
{\rm
The {\em degree}
$d(x)$ of a {word} $x=x_1 \ldots x_n$ in $\F$ ($n \ge 1$, $x_i
\in \calp \cup \calt \cup \calp^* \cup \calt^*$) is defined to be
$d(x)=d(x_1) + \ldots + d(x_n)$,
where $d(x_i)$ is to be the
semigraph-degree $d(x_i)$ when $x_i \in \calt$, $d(x_i) = 0$ if $x_i \in
\calp$, and $d(x_i^*) = - d(x_i)$ for any $x_i \in \calt \cup \calp$.
}
\end{definition}

\begin{definition}   \label{defFiberSPace}
{\rm
The {\em fiber space} of $\F$ is the union of all fibers $\lin(W_n)$, where $\lin$ means linear span and $W_n$ denotes the set of words with degree $n \in \Z^k$.
}
\end{definition}

\begin{definition}   \label{definitionGaugaActions}
{\rm
Let $\sigma: \T^k \longrightarrow \aut(\F)$ be the {\em gauge action}
defined by
$\sigma_\lambda (p) = p$ and
$\sigma_\lambda(t) = \lambda^{d(t)} t$
for all $p \in \calp,t \in \calt$ and $\lambda \in \T^k$.
}
\end{definition}

\begin{lemma}[\cite{burgiSemigraphI}, Lemma 4.7]  \label{lemmaGaugeActions}
Let $X$ be a $*$-algebra which is a quotient of $\F$. Then $X$ is actually the quotient of $\F$ by a subset of the fiber space if and only if
there is a gauge action on $X$ which is canonically induced by the corresponding gauge action on $\F$ (see Definition \ref{definitionGaugaActions}).
\end{lemma}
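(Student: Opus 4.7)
The plan is to reduce the equivalence to an algebraic statement about the defining ideal. Writing $X = \F/J$, ``$X$ is a quotient of $\F$ by a subset of the fiber space'' should mean $J = \langle S \rangle$ for some set $S$ contained in the fiber space $\bigcup_{n \in \Z^k} \lin(W_n)$, while ``the gauge action on $X$ is canonically induced'' means that $\sigma'_\lambda([a]) := [\sigma_\lambda(a)]$ is well defined, equivalently $\sigma_\lambda(J) \subseteq J$ for every $\lambda \in \T^k$. So my aim is to prove: $J$ is generated by homogeneous elements with respect to the $\Z^k$-grading of $\F$ by word-degree if and only if $J$ is $\sigma$-invariant.

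First I would dispatch the easy direction. If $J = \langle S \rangle$ with every $s \in S$ homogeneous of some degree $n_s$, then $\sigma_\lambda(s) = \lambda^{n_s} s \in J$, and since $\sigma_\lambda$ is a $*$-algebra automorphism of $\F$ it carries $\langle S \rangle$ into itself; hence $\sigma_\lambda(J) \subseteq J$ and $\sigma'_\lambda$ descends to $X$.

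For the converse I would pick $x \in J$ and decompose it along the grading as a finite sum $x = \sum_{n \in F} x_n$ with $F \subset \Z^k$ finite and $x_n \in \lin(W_n)$. Gauge-invariance of $J$ yields
\[
\sigma_\lambda(x) = \sum_{n \in F} \lambda^n x_n \in J \qquad \text{for every } \lambda \in \T^k.
\]
To pry the individual $x_n$ out of this identity I would use a Vandermonde extraction: choose a character $\chi \in \T^k$ of $\Z^k$ that separates the finite set $F$ (such $\chi$ exists because for each pair of distinct $n,n' \in F$ the locus $\{\chi \in \T^k : \chi^{n-n'} = 1\}$ is a proper closed subgroup of $\T^k$, and finitely many proper closed subgroups cannot cover $\T^k$), and set $\lambda_i := \chi^{i-1}$ for $i = 1, \dots, |F|$. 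Then the matrix $M := (\lambda_i^{n})_{i,n} = ((\chi^{n})^{i-1})_{i,n}$ is Vandermonde with pairwise distinct nodes $\chi^{n}$, hence invertible over $\C$. Inverting $M$ expresses each $x_n$ as a finite $\C$-linear combination of the elements $\sigma_{\lambda_i}(x) \in J$; since $J$ is a $\C$-linear subspace, $x_n \in J$. Thus $J$ is in fact spanned by its homogeneous components, all of which lie in the fiber space, and the generating description follows.

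The main obstacle I expect is exactly this Vandermonde step. It is the purely algebraic substitute for integrating $\lambda \mapsto \sigma_\lambda(x)$ against a character of $\T^k$, a maneuver which is not available here because $J$ is only required to be a two-sided $*$-ideal, not a closed one, and the action is not even assumed to be continuous. Once invertibility of $M$ is in hand, the rest is routine bookkeeping.
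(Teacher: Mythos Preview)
The paper does not actually supply a proof of this lemma here: it is stated with the citation \cite{burgiSemigraphI}, Lemma 4.7, and no proof environment follows. So there is no argument in the present paper to compare your proposal against.

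That said, your proposal is a correct and self-contained proof. Your reformulation---that the kernel $J$ is generated by homogeneous elements if and only if $\sigma_\lambda(J)\subseteq J$ for all $\lambda$---is exactly the right translation, and the Vandermonde extraction is the standard purely algebraic substitute for integrating against characters when no topology on $J$ is assumed. One minor point: the paper allows the index set $k$ to be infinite, in which case the phrase ``proper closed subgroups cannot cover $\T^k$'' deserves a word of care. But since any finite set $F$ of degrees involves only finitely many coordinates (each word has finite length, and the gauge action $\sigma_\lambda(t)=\lambda^{d(t)}t$ implicitly requires degrees to be finitely supported for $\lambda^{d(t)}$ to be defined), you can simply project to the relevant finite-dimensional torus before choosing $\chi$, and then your argument goes through verbatim.
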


\begin{definition}[Semigraph algebra]    \label{DefSemigraphAlgebra}
{\rm A {\em $k$-semigraph algebra} $X$ is a $*$-algebra which is
generated by disjoint subsets $\calp$ and $\calt$ of $X$, where

\begin{itemize}
\item[(i)]
$\calp$ is a set of commuting projections closed under taking
multiplications,

\item[(ii)]
$\calt$ is a set of nonzero partial isometries closed under nonzero
products,

\item[(iii)]
%
$\calt$ is a non-unital finitely aligned $k$-semigraph,

\item[(iv)]
for all $x \in \calt$ and all $p \in \calp$ there is a $q \in \calp$
such that $p x = x q$,

\item[(v)]
for all $x,y \in \calt$ there exist $q_{x,y,\alpha,\beta} \in \calp$
such that
\begin{equation}  \label{defExpansion}
x^* y = \sum_{(\alpha,\beta) \in \calt_1^{(\min)}(x,y)} \alpha q_{x,y,\alpha,\beta} \beta^*, \mbox{ and}
\end{equation}

\item[(vi)]
$X$ is canonically isomorphic to the quotient of $\F$ by a subset of the fiber space
(Definition \ref{defFiberSPace}).
\end{itemize}
}
\end{definition}

It is understood in identity (\ref{defExpansion}) that the unit $1$ in $\calt_1 = \calt \sqcup \{1\}$ is also a unit for all elements of $X$.
The universal $C^*$-algebra $C^*(X)$ generated by $X$ is called the $k$-semigraph $C^*$-algebra associated to $X$.
We often write also $\sum_{x \alpha = y \beta}$ rather than $\sum_{(\alpha,\beta) \in \calt_1^{(\min)}(x,y)}$
in (\ref{defExpansion}).
\begin{definition}
{\rm
%
We call an element of $\{ s p t^* \in X|\, s,t \in \calt_1, p \in\calp\}$ a {\em standard word}
(of the semigraph algebra $X$).
We call an element of $\{ s p \in X|\,s \in \calt_1, p \in\calp\}$ a {\em half-standard word}.
}
\end{definition}


For a subset $Z$ of a $*$-algebra $X$ we set
\begin{eqnarray*}
\P(Z) &=& \set{P_{x} (1-P_{y_1}) \ldots (1-P_{y_m}) \in X}{ x,y_i \in Z,\,  m \ge 0}.
\end{eqnarray*}

\begin{definition}
{\rm

We call an element of
$\P(\{sp| s \in \calt_1, p \in \calp\})$
a {\em standard projection}
(of the semigraph algebra $X$).
}
\end{definition}

\begin{lemma}[\cite{burgiSemigraphI}, Lemma 5.8] \label{semigraphwordrep}
\begin{itemize}
\item[(a)]
The word set of $X$ is an inverse semigroup of partial isometries.

\item[(b)]
For each word $w$ there exist half-standard words $a_i,b_i$ and $c_i$ such
that
$w w^* = \sum_{i=1}^n a_i a_i^*$ and
$w = \sum_{j =1}^m b_j
c_j^*$
with $d(w)=d(b_s c_s^*)$ for all $1 \le s \le m$.
\end{itemize}
\end{lemma}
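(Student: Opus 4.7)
The plan is to establish (b) by induction on the length of $w$ and then to derive (a) from (b). Three facts are used throughout: single letters in $\calp \cup \calt$ are already partial isometries by axioms (i) and (ii); the relations (iv) and (v) are degree-preserving, so the degree assertion in (b) is automatic; and $Q_t = t^*t \in \calp$ for each $t \in \calt$, since unique factorisation forces $\calt_1^{(\min)}(t,t) = \{(1,1)\}$ and axiom (v) then collapses to a single term.

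For the induction step in (b) I would write $w = va$ with $a \in \calp \cup \calt \cup \calp^* \cup \calt^*$ and apply the inductive hypothesis to express $v = \sum_j b_j c_j^*$ with $b_j = s_j p_j$ and $c_j = t_j q_j$ half-standard. For $a \in \calp$ the adjoint of axiom (iv) yields $t_j^* a = \tilde a\, t_j^*$ with $\tilde a \in \calp$, which is absorbed into the projection part of a new $c_j'$. For $a \in \calt$, axiom (v) rewrites $t_j^* a$ as a finite sum over $\calt_1^{(\min)}(t_j,a)$ (finite alignment ensures finiteness); axiom (iv) then redistributes the resulting projections, and the semigraph multiplication combines each $b_j$ with the new prefix $\alpha \in \calt_1$. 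The cases $a \in \calp^* \cup \calt^*$ reduce to the previous two by taking adjoints. Every rewrite preserves degree by Definition \ref{degreeMap}.

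For the range projection identity $ww^* = \sum_i a_i a_i^*$ I would first refine the decomposition just obtained so its summands are pairwise orthogonal half-standard terms. Commutativity of $\calp$ permits splitting any finite collection of projections into mutually disjoint pieces, and finite alignment ensures that this refinement terminates. Once orthogonal, the cross terms in $ww^* = \sum_{j,k} b_j c_j^* c_k b_k^*$ vanish, and each diagonal term equals $s_j(p_j q_j Q_{t_j})s_j^* = a_j a_j^*$ with the half-standard word $a_j = s_j(p_j q_j Q_{t_j})$.

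Part (a) then follows. The word set is closed under $*$-monomial concatenation and involution by construction. Each standard word $spt^*$ is a partial isometry by the routine computation $(spt^*)(tps^*)(spt^*) = s(pQ_tQ_sp)t^* = spQ_tt^* = spt^*$, using $Q_s, Q_t \in \calp$ together with idempotency and commutativity in $\calp$ and the identities $sQ_s = s$ and $Q_tt^* = t^*$ (which both follow from the partial isometry property of $s$ and $t$). A finite sum of pairwise orthogonal partial isometries is again a partial isometry, so every word is one, and the word set forms an inverse semigroup. The main obstacle is the orthogonal refinement step: producing pairwise orthogonal half-standard representatives is delicate because the natural complement $p - pp'$ lies in $\P(Z)$ (in the notation preceding the lemma) rather than in $\calp$ itself, so one must carry the bookkeeping at the level of range projections of half-standard words throughout.
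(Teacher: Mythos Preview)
The paper does not supply a proof of this lemma: it is quoted verbatim from \cite{burgiSemigraphI}, Lemma 5.8, and used as a black box. There is therefore no ``paper's own proof'' to compare your attempt against.

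That said, your outline is the natural one and tracks how such results are typically established. Two points deserve a closer look. First, the orthogonal refinement you flag as the main obstacle really is the crux: to kill the cross terms $b_j c_j^* c_k b_k^*$ you need $c_j^* c_k = 0$, which via axiom~(v) unfolds into a sum over $\calt_1^{(\min)}(t_j,t_k)$ and does not vanish merely because the $\calp$-parts have been made disjoint; one has to simultaneously refine on the $\calt$-side (using finite alignment) and on the $\calp$-side. Second, your derivation of (a) asserts that ``every word is a partial isometry, so the word set forms an inverse semigroup'', but in a bare $*$-algebra (as opposed to a $C^*$-algebra) this inference requires the additional check that all idempotents in the word set commute; this does follow from the explicit form of $ww^*$ in (b) once you observe that range projections of half-standard words commute (expand $P_a P_b$ via axiom~(v)), but it should be stated rather than absorbed into the sentence.
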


\begin{corollary}   \label{corollarySpannedStandardWords}  \label{corollaryInvSemigroup} \label{lemmaSourceProjHSword}
(a) A semigraph algebra is linearly spanned by its standard words.

(b) The range projection of a word is a sum of range projections of half-standard words.

(c) The source projection of a half-standard word is in $\calp$.
\end{corollary}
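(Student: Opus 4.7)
The plan is to derive all three parts essentially as corollaries of Lemma \ref{semigraphwordrep}, handling the three items in the order (c), (b), (a), since (a) will use (c).

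For (c), I start with a half-standard word $sp$ where $s \in \calt_1$ and $p \in \calp$. Since $p$ is a self-adjoint projection, the source projection is $(sp)^*(sp) = p (s^*s) p$, so it suffices to show $s^*s \in \calp$. If $s = 1$ this is immediate. If $s \in \calt$, I apply relation (v) of Definition \ref{DefSemigraphAlgebra} with $x = y = s$: any pair $(\alpha,\beta) \in \calt_1^{(\min)}(s,s)$ must satisfy $d(s\alpha) = d(s) \vee d(s) = d(s)$, hence $d(\alpha) = d(\beta) = 0$. Since $\calt$ is non-unital (so $d > 0$ on $\calt$), the only candidates are $\alpha = \beta = 1$, so the sum collapses to a single element of $\calp$. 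Closure of $\calp$ under multiplication then gives $p(s^*s)p \in \calp$.

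For (b), there is nothing to do beyond invoking Lemma \ref{semigraphwordrep}(b): the decomposition $ww^* = \sum_{i=1}^n a_i a_i^*$ with $a_i$ half-standard is precisely the desired statement, since $a_i a_i^* = P_{a_i}$.

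For (a), note that $X$ is generated as a $*$-algebra by $\calp \cup \calt$, hence linearly spanned by words. By Lemma \ref{semigraphwordrep}(b) every word $w$ can be written as $w = \sum_j b_j c_j^*$ with $b_j, c_j$ half-standard, say $b_j = s_j p_j$ and $c_j = t_j q_j$ with $s_j, t_j \in \calt_1$ and $p_j, q_j \in \calp$. Since $\calp$ consists of self-adjoint commuting projections closed under multiplication, $p_j q_j^* = p_j q_j \in \calp$, and therefore $b_j c_j^* = s_j (p_j q_j) t_j^*$ is a standard word. The only mild subtlety is tracking that the ambient $\calt_1$ (versus $\calt$) appearances do not cause trouble; this is immediate because the unit $1 \in \calt_1$ acts as the unit in $X$ by the convention following Definition \ref{DefSemigraphAlgebra}. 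No step presents a genuine obstacle—the whole corollary is a bookkeeping exercise on top of Lemma \ref{semigraphwordrep}, with (c) being the only place where the non-unital hypothesis on $\calt$ enters nontrivially.
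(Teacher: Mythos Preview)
Your proof is correct and is precisely the argument the paper leaves implicit: the corollary is stated without proof, as an immediate consequence of Lemma~\ref{semigraphwordrep}, and your derivations of (c) via Definition~\ref{DefSemigraphAlgebra}(v) with $x=y=s$, of (b) directly from the $ww^*$ decomposition, and of (a) by rewriting $b_jc_j^*$ as $s_j(p_jq_j)t_j^*$ are exactly what is intended.
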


The {\em core} of a semigraph algebra is the linear span of all its words with degree zero.
It forms a $*$-subalgebra of the semigraph algebra.

\begin{corollary}[\cite{burgiSemigraphI}, Corollary 6.4] \label{corollarySemigraphMatrixunits}
The core is the union of a net of finite dimensional $C^*$-algebras,
each one allowing a matrix representation where each projection on
the diagonal is a finite sum of mutually orthogonal standard projections.
A $C^*$-representation of $X$ is injective on
the core if and only if it is non-vanishing on nonzero standard projections.
\end{corollary}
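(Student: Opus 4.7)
The plan is to exhibit the core as the closure of a net of finite-dimensional $*$-subalgebras, each of which decomposes as a direct sum of full matrix algebras whose identities are sums of mutually orthogonal standard projections, and then to deduce the faithfulness criterion from the corresponding elementary fact for finite-dimensional $C^*$-algebras.

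First I would record the matrix-unit-like relation that governs the core. By Corollary~\ref{corollarySpannedStandardWords}(a) the core is linearly spanned by standard words $spt^*$ with $d(s)=d(t)$. For any four elements with $d(s)=d(t)=d(s')=d(t')=n$, the expansion formula in Definition~\ref{DefSemigraphAlgebra}(v) applied to $t^*s'$ is indexed by $\calt_1^{(\min)}(t,s')$; the requirement $d(t\alpha)=d(t)\vee d(s')=n$ together with the non-unitality of $\calt$ forces $\alpha=\beta=1$, collapsing the sum to $t^*s'=\delta_{t,s'}\,Q_t\in\calp$. Combined with axiom~(iv) this yields $(spt^*)(s'p't'^*)=\delta_{t,s'}\,s(p\,Q_t\,p')t'^*$, showing that for each $n\in\N_0^k$ the span $\calf_n$ of standard words with underlying degree $n$ is a $*$-subalgebra of the core behaving like matrix units with ``scalar'' entries in $\calp$.

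Next, for each finite $F\subseteq\calt$ all of whose elements have a common degree $n$, and each finite Boolean subalgebra $B$ of the commuting-projection algebra generated by $\calp$ (large enough to contain $\{Q_s:s\in F\}$), I would set $A_{F,B}=\lin\{sbt^*:s,t\in F,\,b\in B\}$. Letting $r_1,\ldots,r_N$ denote the atoms of $B$ and $F_j=\{s\in F:r_j\le Q_s\}$, the families $\{sr_jt^*:s,t\in F_j\}$ are mutually orthogonal systems of matrix units, giving $A_{F,B}\cong\bigoplus_j M_{|F_j|}(\C)$. The identity of the $j$th summand is $\sum_{s\in F_j}sr_js^*$; writing $r_j=p\prod_i(1-p_i)$ with $p,p_i\in\calp$, a computation using axiom~(iv) and the identity $P_{sp}P_{spp'}=P_{spp'}$ (which follows from $sp=spQ_s$) yields
\[
sr_js^* \;=\; sp\prod_i(1-p_i)s^* \;=\; P_{sp}\prod_i\bigl(1-P_{spp_i}\bigr),
\]
which is by definition a standard projection. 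The net $\{A_{F,B}\}$ indexed by all such pairs is linearly dense in the core by the previous paragraph, so passing to $C^*$-closures gives the first assertion.

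For the faithfulness statement, a $*$-representation $\pi$ of $X$ is injective on the core iff it is injective on each $A_{F,B}$; for a finite-dimensional $C^*$-algebra $\bigoplus_j M_{|F_j|}(\C)$ this is in turn equivalent to $\pi$ not vanishing on the identity of each matrix summand, which by the previous paragraph is a sum of mutually orthogonal standard projections. Since every nonzero standard projection appears as such an identity for sufficiently coarse $(F,B)$, the criterion is equivalent to $\pi$ being non-vanishing on every nonzero standard projection. The main obstacle is the identity $sp\prod_i(1-p_i)s^*=P_{sp}\prod_i(1-P_{spp_i})$ used above; it requires a careful induction on the number of factors $(1-p_i)$ and repeated application of axiom~(iv) to shift projections through $s$, and without it one cannot conclude that the diagonal entries of the matrix representation fall into the prescribed class of standard projections.
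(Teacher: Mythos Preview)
The statement is imported from \cite{burgiSemigraphI} and is not proved in the present paper, so there is no in-paper argument to compare against; the comments below concern only the internal soundness of your sketch.

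The decisive gap is your restriction that the finite set $F\subseteq\calt$ consist of elements of a \emph{single} degree~$n$. With that restriction the family $\{A_{F,B}\}$ is not directed and its union is not the core. Concretely, take the Toeplitz algebra of a single isometry $s$ (so $\calt=\{s,s^2,\ldots\}$, $\calp=\{1\}$): for each $n$ your $A_{F,B}$ is at most $\C\,s^ns^{*n}$, and the core element $ss^*+s^2s^{*2}$ lies in none of them. More generally, in exactly the free/Toeplitz situations this paper treats, a standard word $spt^*$ with $d(s)=d(t)=n_1$ cannot be rewritten as a sum of standard words at a strictly larger degree, so there is no common upper bound for $(F_1,B_1)$ at degree $n_1$ and $(F_2,B_2)$ at degree $n_2\neq n_1$. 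Your assertion that ``the net $\{A_{F,B}\}$ \ldots\ is linearly dense in the core'' is true only in the weak sense that the union \emph{spans} the core; it does not give the core as a \emph{union} of a directed family, which is what the corollary asserts and what your faithfulness argument needs.

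The single-degree matrix-unit calculation you give is correct and is one ingredient, but the actual approximating finite-dimensional algebras must be built from half-standard words of several degrees simultaneously, and the orthogonal diagonal pieces are then obtained by refining the commuting range projections $P_{sp}$ against one another. This is precisely why the corollary speaks of diagonal entries that are \emph{sums of mutually orthogonal standard projections} rather than single projections, and why standard projections of the general shape $P_x\prod_i(1-P_{y_i})$ with $d(y_i)>d(x)$ must appear; in your construction every diagonal entry has the special form $P_{sp}\prod_i(1-P_{spp_i})$ with all $y_i$ sharing the $\calt$-part of $x$, and projections such as $P_{s^j}(1-P_{s^{j+1}})$ in the example above never arise.
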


\begin{definition}   \label{defCancel}
{\rm A semigraph algebra $X$ is called {\em cancelling} if for every
standard word $w$ with nonzero degree and every nonzero
standard projection $p$ there is a nonzero standard projection
$q$ such that $q \le p$ and $q w q = 0$.
}
\end{definition}

\begin{theorem}[Cuntz--Krieger uniqueness theorem, \cite{burgiSemigraphI}, Theorem 7.3] \label{theoremamenablesemigraphsystem}

Let $X$ be a cancelling semigraph algebra.
Then
the universal representation $X \longrightarrow C^*(X)$ is injective on
the core, and so non-vanishing on the nonzero standard projections,
and up to
isomorphism this is the only existing representation of $X$ in a $C^*$-algebra which
is non-vanishing on nonzero standard projections and has dense image.
\end{theorem}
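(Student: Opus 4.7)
The plan is a standard gauge-action/faithful-conditional-expectation argument, with the cancelling hypothesis on $X$ supplying the crucial cutting-down step of the type that proves Cuntz--Krieger uniqueness theorems in the graph algebra literature.

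\emph{Setup.} By Corollary \ref{corollarySemigraphMatrixunits}, injectivity on the core is equivalent, for any $C^*$-representation of $X$, to non-vanishing on nonzero standard projections, so the two stated conclusions about the universal representation $\iota: X \to C^*(X)$ amount to a single assertion. By Definition \ref{DefSemigraphAlgebra}(vi) and Lemma \ref{lemmaGaugeActions}, $X$ carries a canonical gauge action $\sigma: \T^k \to \aut(X)$ extending strongly continuously to $C^*(X)$, and Haar averaging yields a faithful conditional expectation $E: C^*(X) \to C^*(X)^{\sigma}$ onto the fixed-point subalgebra, which coincides with the norm-closure of the image of the core. For the uniqueness statement, let $\pi: X \to B$ be any representation with dense image that is non-vanishing on standard projections; by universality $\pi = \bar{\pi} \circ \iota$ for a surjection $\bar\pi: C^*(X) \to B$ which we must show is an isomorphism.

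\emph{Cutting down via cancelling.} The technical heart of the proof is the claim that, for every $a \in X$ and every nonzero standard projection $p$, there is a nonzero standard subprojection $q \le p$ with $qaq = qE_X(a)q$, where $E_X: X \to \mathrm{core}(X)$ denotes the algebraic projection onto the zero-degree fiber. Corollary \ref{corollarySpannedStandardWords}(a) writes $a$ as a finite sum of standard words, whose nonzero-degree contributions form a finite list $w_1, \ldots, w_N$. Applying Definition \ref{defCancel} to $p$ and $w_1$ gives a nonzero standard $q_1 \le p$ with $q_1 w_1 q_1 = 0$; applying cancelling again to $q_1$ and $w_2$ gives $q_2 \le q_1$ killing $w_2$; iterating $N$ times produces $q := q_N$ lying in $\P(\{sp \mid s \in \calt_1,\, p \in \calp\})$ and annihilating every $w_j$ from both sides.

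\emph{Assembly and obstacle.} Combining the ingredients, for any representation $\rho \in \{\iota, \pi\}$ (each isometric on the AF closure of $\iota(\mathrm{core}(X))$ by Corollary \ref{corollarySemigraphMatrixunits} and the non-vanishing hypothesis) and any $a \in X$, the cutting-down gives $\|\rho(a^*a)\| \ge \|\rho(q(a^*a)q)\| = \|\rho(qE_X(a^*a)q)\|$. Taking a supremum over $q$ and using faithfulness of $E$ forces $\|\rho(a)\|^2$ to equal the same core-side quantity for both representations, so $\|\pi(a)\|_B = \|\iota(a)\|_{C^*(X)}$, and $\bar\pi$ is isometric on the dense subalgebra $\iota(X)$, hence an isomorphism. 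I expect the main obstacle to be the iterative cancelling step: one must verify at each stage that the successive projection is nonzero and lies in the class $\P(\cdot)$ of standard projections, and that the process composes into a single $q$ compatible with all the $w_j$ simultaneously. A secondary but essential issue is knowing a priori that some $C^*$-representation of $X$ is non-vanishing on standard projections, so that $\iota$ itself is, which in this paper's setting is handled by concrete constructions such as Proposition \ref{PropRepresVarphi}.
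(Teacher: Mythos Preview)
The paper does not prove this theorem; it is quoted from \cite{burgiSemigraphI} without proof, so there is no ``paper's own proof'' to compare against. I can only assess your proposal on its merits.

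Your outline for the uniqueness part (B) is the standard one and is essentially correct, but the ``Assembly'' paragraph is too compressed to be a proof. The phrase ``taking a supremum over $q$'' is misleading: what is actually needed is to pick, for a self-adjoint $b\in X$, a \emph{specific} standard projection $q$ with $qbq=\lambda q$ and $|\lambda|=\|E_X(b)\|_{\mathrm{core}}$. One obtains this by locating $E_X(b)$ in a finite-dimensional block of the core (Corollary~\ref{corollarySemigraphMatrixunits}), choosing a minimal diagonal projection on which the norm of $E_X(b)$ is attained, passing to a standard subprojection $p$, and \emph{then} running the iterative cancelling to get $q\le p$. With this refinement the kernel argument goes through: if $\bar\pi(a)=0$ with $a\neq 0$, approximate $a^*a$ by $b\in X$, cut down to $qbq=\lambda q$ with $|\lambda|$ bounded below, and derive a contradiction from $\|\bar\pi(\iota(b))\|$ being small while $\pi(q)\neq 0$.

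There is, however, a genuine gap in your treatment of part (A), the injectivity of $\iota$ on the core. You correctly identify that one needs \emph{some} $C^*$-representation non-vanishing on standard projections, but your proposed fix via Proposition~\ref{PropRepresVarphi} does not work: that proposition constructs $\varphi$ only for the freely generated semigraph algebra of a $k$-semigraph $T$, not for an arbitrary cancelling semigraph algebra $X$. Worse, the paper's own logic runs in the opposite direction: in the proof of Theorem~\ref{TheoremSemigraphCStarToeplitz} one first checks directly that $\varphi$ is non-vanishing on standard projections and \emph{then} invokes Theorem~\ref{theoremamenablesemigraphsystem} to identify $\varphi$ with the universal representation. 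So even in that special case, citing Proposition~\ref{PropRepresVarphi} to establish part (A) of Theorem~\ref{theoremamenablesemigraphsystem} would be circular. The general existence of such a representation for arbitrary cancelling $X$ is exactly what the proof in \cite{burgiSemigraphI} must supply, and your proposal does not address it.
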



\section{Free semigraph algebras}
\label{SectionFreeSemiAlgebra}

In this section we consider a condition on a semigraph algebra called freeness, and a weaker one called weakly freeness.
The rough idea of freeness is that range projections of generators should not sum up to a unit.
Freeness is thus a generalisation of the defining inequality  $P_{s_1} + P_{s_2} < 1$ in the Toeplitz version of the Cuntz algebra $\calo_2$
with generating isometries $s_1$ and $s_2$ to arbitrary higher rank semigraph algebras.
%
%
In this section $X$ denotes a semigraph algebra.

\begin{definition}
{\rm
Write $X^{\backslash i}$
for the $*$-subalgebra of $X$ generated by the elements $\calp$
and the non-unital finitely aligned semigraph
$\calt^{\backslash i}= \set{t \in \calt}{d(t)_i =0}$
(the $i$th coordinate vanishes). We set $\calt_1^{\backslash i} = \calt^{\backslash i} \sqcup \{1\}$.
}
\end{definition}

\begin{lemma}   \label{lemmaXbacki}
$X^{\backslash i}$ is a semigraph algebra.
\end{lemma}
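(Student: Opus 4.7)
The plan is to verify each of the six clauses of Definition \ref{DefSemigraphAlgebra} for $X^{\backslash i}$ as a $*$-algebra generated by the sets $\calp$ and $\calt^{\backslash i}$. Clauses (i), (ii) and (iv) transfer immediately from $X$: the set $\calp$ is unchanged, and $\calt^{\backslash i}$ is closed under defined products because degrees add coordinatewise (so a product of two elements with $i$th degree $0$ again has $i$th degree $0$), and the commutation relations $px = xq$ already hold inside $X$.

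For clause (iii), I would first observe that $\calt^{\backslash i}$ is a $k$-semigraph in its own right: given $x \in \calt^{\backslash i}$ with $d(x)=n_1+n_2$ in $\N_0^k$, the unique factorisation $x=x_1x_2$ from $\calt$ automatically lies in $\calt^{\backslash i}$ because $(n_1)_i+(n_2)_i=d(x)_i=0$ forces both $(n_j)_i=0$. Non-unitality is inherited since no element of $\calt$ has zero degree. For finite alignment, I would argue that
$$(\calt^{\backslash i})_1^{(\min)}(x,y)\subseteq \calt_1^{(\min)}(x,y)$$
whenever $x,y\in\calt^{\backslash i}$, so the left-hand side is finite. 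In fact, these two sets are equal: if $(\alpha,\beta)\in\calt_1^{(\min)}(x,y)$ with $x,y\in\calt^{\backslash i}$, then $d(x\alpha)_i=(d(x)\vee d(y))_i=0$, which forces $d(\alpha)_i=d(\beta)_i=0$ and puts $\alpha,\beta\in\calt_1^{\backslash i}$. This same observation takes care of clause (v): the expansion identity in $X$,
$$x^*y=\sum_{(\alpha,\beta)\in\calt_1^{(\min)}(x,y)}\alpha\,q_{x,y,\alpha,\beta}\,\beta^*,$$
has its indexing set already contained in $(\calt^{\backslash i})_1^{(\min)}(x,y)$, so the same identity holds inside $X^{\backslash i}$ with the indexing set read in $\calt_1^{\backslash i}$.

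The one step that is not bookkeeping is clause (vi). Here I would invoke Lemma \ref{lemmaGaugeActions}: let $\F^{\backslash i}$ denote the free non-unital $*$-algebra on $\calp\cup\calt^{\backslash i}$ and consider the canonical surjection $\F^{\backslash i}\twoheadrightarrow X^{\backslash i}$. The $\T^k$-gauge action on $X$ (which exists by (vi) applied to $X$) fixes $\calp$ pointwise and multiplies each $t\in\calt^{\backslash i}\subseteq\calt$ by $\lambda^{d(t)}$, so it preserves the subalgebra $X^{\backslash i}$ and induces a gauge action there. This restricted action is visibly compatible with the gauge action on $\F^{\backslash i}$ under the surjection, so Lemma \ref{lemmaGaugeActions} identifies $X^{\backslash i}$ as a quotient of $\F^{\backslash i}$ by a subset of the fiber space.

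The only real obstacle is (vi); everything else is essentially degree-bookkeeping using the fact that the $i$th coordinate is identically zero on $\calt^{\backslash i}$. By routing (vi) through Lemma \ref{lemmaGaugeActions} rather than trying to describe the kernel of $\F^{\backslash i}\to X^{\backslash i}$ directly, one avoids any delicate argument about lifting relations from $X$ to the smaller presentation.
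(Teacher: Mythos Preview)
Your proof is correct and follows essentially the same approach as the paper: points (i)--(v) are handled by degree bookkeeping (the paper simply calls them ``almost obvious''), and the only substantive step, point (vi), is dispatched exactly as the paper does, by restricting the gauge action on $X$ to $X^{\backslash i}$ and invoking Lemma~\ref{lemmaGaugeActions}. Your write-up is more detailed on (i)--(v), but there is no difference in strategy.
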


\begin{proof}
All points (i)-(v) of Definition \ref{DefSemigraphAlgebra} are
almost obvious. Point (vi) can be seen by Lemma
\ref{lemmaGaugeActions} and the fact that the restriction of the gauge action on $X$ to $X^{\backslash i}$ is a gauge action on $X^{\backslash i}$.
\end{proof}

We shall regard $X^{\backslash i}$ as a sub-semigraph algebra of $X$.
When we work in $X$ and say ``$w$ is a word in $X^{\backslash i}$'' (and so on)
then ``word" refers to the semigraph algebra $X^{\backslash i}$; so $w$ does not mean a word in $X$
which accidentally happens to be in $X^{\backslash i}$.

 %
%

%

\begin{definition}   \label{defKToeplitz}
{\rm A semigraph algebra $X$ is called {\em weakly free} if for all
coordinates $i \in k$, all nonzero standard projections $p$ of $X^{\backslash i}$
and all finite subsets
$\calb \subseteq \calt^{(e_i)}$ the inequality
$p \le \sum_{b \in \calb} P_b$
does not hold. }
\end{definition}

The next two lemmas (Lemma \ref{lemmamini22} and Lemma \ref{lemmaKToeplitz}) will only be used in the proof of Lemma \ref{proofcondab}.
The reader only interested in Theorem \ref{TheoremKTheory} could go directly to Section \ref{SectionKTheoryWeaklyFreeSemigraph} after these two lemmas.

\begin{lemma}   \label{lemmamini22}
If $i \in k, a \in \calt^{(e_i)}$ and $w$ is a word in $X^{\backslash i}$
then there exist $b_1, \ldots, b_n \in \calt^{(e_i)}$ such that
$P_a w = P_a w (P_{b_1} + \ldots
+ P_{b_n})$.
\end{lemma}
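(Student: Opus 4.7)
The plan is to reduce $a^*w$ to a linear combination of standard words whose right factors $\beta^*$ carry an $e_i$-letter on the outermost position, and then to read off the $b_j$'s directly from those letters.

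First I would invoke Corollary \ref{corollarySpannedStandardWords}(a) to write
\[ a^*w = \sum_l c_l\,\alpha_l q_l \beta_l^* \]
with $c_l \in \C$, $\alpha_l,\beta_l \in \calt_1$, $q_l \in \calp$. Since $a^*w$ is gauge-homogeneous of degree $d(w)-e_i$, and each standard word is gauge-homogeneous of degree $d(\alpha_l)-d(\beta_l)$, the fiber-space decomposition of $X$ (Lemma \ref{lemmaGaugeActions}) lets me assume that every $\alpha_l q_l\beta_l^*$ in the expansion satisfies $d(\alpha_l)-d(\beta_l) = d(w)-e_i$. Reading off the $i$-th coordinate and using $d(w)_i=0$ then forces $d(\beta_l)_i \geq 1$. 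The unique factorisation property therefore yields $\beta_l = b_l\beta_l'$ with $b_l \in \calt^{(e_i)}$ and $\beta_l' \in \calt_1$, so that
\[ P_a w = a a^* w = \sum_l c_l\, a\alpha_l q_l (\beta_l')^* b_l^*. \]

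Taking $B \subseteq \calt^{(e_i)}$ to be the finite set of distinct $b_l$'s, the identity I want follows from the orthogonality $b_l^*b = 0$ for $b \in \calt^{(e_i)}$ with $b \neq b_l$. To see this, any $(\alpha,\beta) \in \calt_1^{(\min)}(b_l,b)$ would have $d(b_l\alpha) = e_i \vee e_i = e_i$, forcing $\alpha = \beta = 1$ and hence $b_l = b$; so the minimal common extension set is empty, and property (v) of Definition \ref{DefSemigraphAlgebra} yields $b_l^*b = 0$. Combined with the trivial $b_l^* b_l b_l^* = b_l^*$, this gives $b_l^* \sum_{b \in B} P_b = b_l^*$ for each $l$, and right-multiplying $P_a w$ by $\sum_{b \in B} P_b$ returns $P_a w$.

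The main obstacle I expect is the first step: cleanly justifying that the standard-word expansion of $a^*w$ may be taken homogeneous of degree $d(w)-e_i$. Once this reduction is in place (it rests on combining Corollary \ref{corollarySpannedStandardWords}(a) with Lemma \ref{lemmaGaugeActions}), the remainder is a direct calculation using unique factorisation and the elementary orthogonality of distinct degree-$e_i$ generators.
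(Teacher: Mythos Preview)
Your argument is correct. In fact, the homogeneity step you flag as the ``main obstacle'' is already packaged in Lemma~\ref{semigraphwordrep}(b): applied to the word $a^*w$, it directly yields a standard-word expansion $a^*w=\sum_j b_j c_j^*$ with each summand of degree $d(w)-e_i$, so you need not route through Corollary~\ref{corollarySpannedStandardWords}(a) and Lemma~\ref{lemmaGaugeActions} separately. After that, your degree count and the orthogonality of distinct elements of $\calt^{(e_i)}$ (via Definition~\ref{DefSemigraphAlgebra}(v) with an empty minimal common extension) finish the job.

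The paper takes a more hands-on route. It first reduces by linearity to a single standard word $w = s p t^*$ with $s,t \in \calt_1^{\backslash i}$, then expands $a^* s$ explicitly via axiom~(v); since $d(a)=e_i$ and $d(s)_i=0$, the $\beta$'s arising in $\calt_1^{(\min)}(a,s)$ automatically have $d(\beta)=e_i$. It then pushes $p$ past $\beta$ using axiom~(iv) and refactors $t\beta = \beta' t'$ with $d(\beta')=e_i$ via unique factorisation, so that the $\beta'$'s serve as the $b_j$'s. Your approach trades this explicit bookkeeping for a single appeal to the graded structure, which is cleaner; the paper's computation, on the other hand, is entirely local to axioms~(iv)--(v) and unique factorisation, and does not invoke the gauge action or the word decomposition lemma at all.
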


\begin{proof}
Since every word can be written as a sum of standard words, we may assume that $w$ is a standard word and write $w = s p t^*$ for $s,t \in \calt_1^{\backslash i}$ and $p \in \calp$.
Then (by Definition \ref{DefSemigraphAlgebra} (v))
\begin{eqnarray*}
(a a^*) (s p t^*)
&=& a \sum_{a \alpha =  s \beta} \alpha q_{a,s,\alpha,\beta} \beta^* p t^* \\
&=&  \sum_{a \alpha =  s \beta, \,t \beta \neq 0} a \alpha q_{a,s,\alpha,\beta} p_\beta \beta^*  t^* \\
&=&  \sum_{a \alpha =  s \beta , \, t \beta \neq 0} a \alpha q_{a,s,\alpha,\beta} p_\beta t'^* \beta'^*
\sum_{a \alpha =  s \beta ,\, t \beta \neq 0} P_{\beta'}\\
\end{eqnarray*}
for $p_\beta \in \calp$ with $p \beta = \beta p_\beta$ (by Definition \ref{DefSemigraphAlgebra} (iv)), and where $d(\beta) = d(a)= e_i$,
and $t \beta = \beta' t'$ by the unique factorisation property with $d(\beta')= e_i$
(if $t \beta \neq 0$, which, recall, is equivalent to $t \beta$ being a defined product in $\calt$).
\end{proof}

\begin{lemma} \label{lemmaKToeplitz}
If $X$ is
weakly free then for every coordinate $i \in k$, every element $x$ of the fiber space of $X^{\backslash i}$,
and every
finite subset $\calb$ of  $\calt^{(e_i)}$ we have
that $(\sum_{b \in \calb} P_b ) x = x$ implies $x=0$.
\end{lemma}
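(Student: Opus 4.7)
The plan is to argue by contradiction: assume $x \ne 0$ and produce a nonzero standard projection of $X^{\backslash i}$ dominated by $P := \sum_{b \in \calb} P_b$, which directly violates weak freeness.

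First I verify that $P$ is itself a projection. For distinct $b, b' \in \calt^{(e_i)}$ the set $\calt_1^{(\min)}(b,b')$ is empty: any $(\alpha,\beta)$ in it would satisfy $d(b\alpha) = e_i \vee e_i = e_i$, hence $d(\alpha)=d(\beta)=0$; since $\calt$ is non-unital this forces $\alpha = \beta = 1 \in \calt_1$ and then $b = b'$, a contradiction. Identity~(\ref{defExpansion}) then gives $P_b P_{b'} = 0$ for $b \ne b'$, so $P$ is a sum of pairwise orthogonal projections and is itself a projection. The hypothesis $Px=x$ yields $Pxx^*P = xx^*$.

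Next, $xx^* \in X^{\backslash i}$ is positive of degree $0$, so it lies in the core of $X^{\backslash i}$. By Corollary~\ref{corollarySemigraphMatrixunits} applied to $X^{\backslash i}$, the element $xx^*$ sits in some finite-dimensional $C^*$-subalgebra $A$ of this core whose matrix representation has its minimal diagonal projections $q_1,\ldots,q_N$ given by mutually orthogonal standard projections of $X^{\backslash i}$ summing to $1_A$. If $xx^*=0$ we are done. Otherwise, the faithful trace on the finite-dimensional algebra $A$ yields some index $m_0$ with $q_{m_0}xx^*q_{m_0} \ne 0$, and minimality of $q_{m_0}$ in $A$ then gives $q_{m_0}xx^*q_{m_0} = \gamma\,q_{m_0}$ with $\gamma>0$.

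Substituting $xx^* = Pxx^*P$ yields $\gamma q_{m_0} = (q_{m_0}P)(xx^*)(Pq_{m_0})$, and applying functional calculus to $P xx^* = xx^* = xx^* P$ gives $P(xx^*)^{1/2} = (xx^*)^{1/2} = (xx^*)^{1/2}P$. Then $v := \gamma^{-1/2} q_{m_0} P (xx^*)^{1/2}$ is a partial isometry in $C^*(X)$ with $vv^* = q_{m_0}$ and source projection $v^*v \le P$, so $q_{m_0}$ is Murray--von Neumann subequivalent to $P$. The remaining task is to upgrade this subequivalence to the order relation $q_{m_0} \le P$, exploiting that $q_{m_0}$ is a standard projection of $X^{\backslash i}$ and that the semigraph relations (Definition~\ref{DefSemigraphAlgebra}(iv)--(v)) tightly constrain how $q_{m_0}$ interacts with each $P_b$, $b \in \calt^{(e_i)}$. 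Once $q_{m_0}\le P$ is established, weak freeness gives the desired contradiction and forces $x=0$.

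The main obstacle is this last upgrade from Murray--von Neumann subequivalence to the literal projection inequality. In a generic $C^*$-algebra this step is not automatic, but here the matrix-unit structure supplied by Corollary~\ref{corollarySemigraphMatrixunits}, combined with the controlled way that standard projections of $X^{\backslash i}$ pull through $\calt^{(e_i)}$-range projections via (\ref{defExpansion}), must be used to show that $q_{m_0}Pq_{m_0}$---an a priori arbitrary positive element of the corner $q_{m_0} C^*(X) q_{m_0}$ bounded above by $q_{m_0}$---is actually equal to $q_{m_0}$. This is the delicate combinatorial heart of the argument, requiring careful bookkeeping of the minimal common extensions $\calt_1^{(\min)}(s,b)$ that appear when one expands $q_{m_0} P_b q_{m_0}$ for $q_{m_0}$ of the form $P_{sp}$.
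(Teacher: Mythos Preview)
Your argument has a genuine gap that you yourself flag: the passage from Murray--von Neumann subequivalence $q_{m_0} \precsim P$ to the order relation $q_{m_0} \le P$. You describe this as the ``delicate combinatorial heart'' and leave it unproved. In fact the whole partial-isometry detour is unnecessary, and the fix is a one-line observation that the paper uses and you overlook: the projection $P = \sum_{b \in \calb} P_b$ \emph{commutes} with every standard projection of $X^{\backslash i}$. Each $P_b = b b^*$ is the range projection of a word in $X$, and each standard projection of $X^{\backslash i}$ is built from range projections of words in $X$; by Lemma~\ref{semigraphwordrep}(a) the word set of $X$ is an inverse semigroup of partial isometries, so all such range projections commute. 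Once you know $P q_{m_0} = q_{m_0} P$, the relation $P x x^* = x x^*$ gives
\[
\gamma\, q_{m_0} \;=\; q_{m_0} x x^* q_{m_0} \;=\; q_{m_0} (P x x^*) q_{m_0} \;=\; P\, q_{m_0} x x^* q_{m_0} \;=\; \gamma\, P q_{m_0},
\]
hence $q_{m_0} \le P$, contradicting weak freeness. No minimal-common-extension bookkeeping is required. The paper's proof uses this same commutativity trick directly on $x$ (not $x x^*$) via the matrix units: $e_{i_0 i_0} = e_{i_0 i_0} x e_{j_0 i_0} = e_{i_0 i_0} P x e_{j_0 i_0} = P e_{i_0 i_0}$.

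There is a second, smaller issue. Your reduction of the general fiber case to the core case goes through $x x^*$, and you tacitly assume that $x \ne 0$ forces $x x^* \ne 0$. This is automatic in a $C^*$-algebra but not in the $*$-algebra $X^{\backslash i}$, and the lemma is stated without any hypothesis that $X \hookrightarrow C^*(X)$ is injective; you also invoke functional calculus for $(x x^*)^{1/2}$, which again lives only in the $C^*$-completion. The paper avoids this entirely: it first proves the core case purely algebraically, and then for a general fiber writes $x = \sum_k a_k x_k b_k^*$ with $x_k$ in the core and uses Lemma~\ref{lemmamini22} to pull $P$ through $a_{k_0}^*$, obtaining $P' r_{k_0} = r_{k_0}$ for $r_{k_0} = a_{k_0}^* x b_{k_0}$ in the core. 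This reduction is algebraic and does not rely on positivity.
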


\begin{proof}
Suppose that $x$ is a nonzero element of the $0$-fiber $X_0^{\backslash i}$ (i.e. the core) of $X^{\backslash i}$, $p= \sum_{b
\in \calb} P_b$, and $p x = x$. The element $x$
is in the core of $X^{\backslash i}$ and so in a finite dimensional
$C^*$-algebra $A$ as described in Corollary
\ref{corollarySemigraphMatrixunits}. Say, $x= \sum_{ij} \lambda_{ij}
e_{ij}$ for matrix units $e_{ij}$, where each diagonal unit $e_{ii}$
is a sum of mutually orthogonal standard projections of $X^{\backslash i}$ (Corollary
\ref{corollarySemigraphMatrixunits}). Since $x \neq 0$ we may
suppose that $0 \neq \lambda_{i_0 j_0} = 1$ for some fixed pair
$(i_0,j_0)$. By Corollary \ref{corollarySemigraphMatrixunits}, there
is a nonzero standard projection $q$ in $X^{\backslash i}$ (so $q \in \P(\calt_1^{\backslash i} \calp)$) such that $q \le e_{i_0 i_0}$. Note that $p$ commutes with $e_{i_0 i_0}$ since standard projections commute.
Then
$$q \le e_{i_0 i_0} = e_{i_0 i_0} x e_{j_0 i_0} =   e_{i_0 i_0} p x
e_{j_0 i_0} = p e_{i_0 i_0} x e_{j_0 i_0} = p e_{i_0 i_0} \le p.$$
However, $q \le p$ contradicts the weakly free condition in $X$.
Thus $x = 0$.

Now assume that $x$ is in another fiber $X_n^{\backslash i}$ ($n \in \Z^k$) and $p x = x$
(where $p= \sum_{b\in \calb} P_b$ again).
We may write $x$ as
$x = \sum_{k=1}^l a_k x_k b_k^*$,
where $x_k$ is an element of the core $X_0^{\backslash i}$, $a_k \in \calt_1$ with $d(a_k) = \max(n,0)$,
$b_k \in \calt_1$ with $d(b_k^*)= \min(n,0)$, and the pairs $(a_k,b_k)$ are mutually distinct for different $k$'s, for all $1 \le k \le l$. Fix $1 \le k_0 \le l$. Then, since $px=x$, and by several applications of Lemma \ref{lemmamini22},
there exists a subset $\calb' \subseteq \calt^{(e_i)}$ such that for $p'=\sum_{b \in \calb'} P_b$ one has
$$r_{k_0}:= a_{k_0}^* x b_{k_0} = Q_{a_{k_0}} x_{k_0} Q_{b_{k_0}} = a_{k_0}^* p x b_{k_0} = p' a_{k_0}^* p x b_{k_0}
= p' r_{k_0}.$$
Thus $r_{k_0} = p' r_{k_0}$ and $r_{k_0} \in X_0^{\backslash i}$, and so by what we have already
proved, $r_{k_0} = 0$. Since $k_0$ was arbitrary,
$$0= \sum_{k_0=1}^l a_{k_0} r_{k_0} b_{k_0}^* = \sum_{k_0=1}^l a_{k_0} x_{k_0} b_{k_0}^* = x.$$
\end{proof}

\begin{definition}   \label{defToeplitz}
{\rm A semigraph algebra $X$ is called {\em free} if
\begin{itemize}
\item[(i)]
$P_a$ is not in $\calp$ for every half-standard word $a \not \in \calp$, and

\item[(ii)] if $p \in \calp$ and $a_1, \ldots, a_n$ are half-standard words with $P_{a_i} < p$ for all $1 \le i \le n$ then
$\bigvee_{i=1}^n P_{a_i}
< p$.


\end{itemize}
}
\end{definition}

\begin{lemma} \label{lemmaEquivToeplitzCond}
Let $X$ be free.
If
$a,b_1, \ldots , b_n$ are half-standard words
satisfying $P_a  (1-P_{b_i}) \neq 0$ for all $1 \le i \le n$
then
$P_a \prod_{i=1}^{n} (1- P_{b_i} ) \neq 0$.
\end{lemma}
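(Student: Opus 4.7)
The plan is to argue by contradiction: assume $P_a\prod_{i=1}^n(1-P_{b_i})=0$, and then conjugate this identity by $a^*$ on the left and $a$ on the right to reduce the problem to one about projections bounded above by $Q_a=a^*a$. The point is that $Q_a$ lies in $\calp$ by Corollary \ref{lemmaSourceProjHSword}(c), so condition (ii) of Definition \ref{defToeplitz} can be applied with $p=Q_a$, whereas the original bounding projection $P_a$ need not lie in $\calp$.

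I would first introduce the auxiliary projections $\tilde P_{b_i}:=a^*P_{b_i}a$. Using that standard projections commute (as already invoked in the proof of Lemma \ref{lemmaKToeplitz}) together with $P_aa=a$, the identities
\[
a^*\bigl[P_a(1-P_{b_i})\bigr]a=Q_a-\tilde P_{b_i}, \qquad a^*\Bigl[P_a\prod_{i=1}^n(1-P_{b_i})\Bigr]a=\prod_{i=1}^n(Q_a-\tilde P_{b_i})
\]
should fall out by a short inductive computation, and the corresponding identities with the roles of $a$ and $a^*$ swapped return the original $P_a(1-P_{b_i})$ and $P_a\prod_i(1-P_{b_i})$. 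Consequently each such conjugate vanishes iff the original does: the hypothesis $P_a(1-P_{b_i})\ne 0$ becomes $\tilde P_{b_i}<Q_a$ strictly for every $i$, while the contradiction hypothesis becomes $\prod_{i=1}^n(Q_a-\tilde P_{b_i})=0$, i.e.\ $\bigvee_i\tilde P_{b_i}=Q_a$.

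Next I would decompose each $\tilde P_{b_i}=(a^*b_i)(a^*b_i)^*$ by applying Lemma \ref{semigraphwordrep}(b) to the word $a^*b_i$: this produces half-standard words $c_{i,1},\ldots,c_{i,m_i}$ with $\tilde P_{b_i}=\sum_k P_{c_{i,k}}$. Because a sum of projections that is itself a projection must be an orthogonal sum, each $P_{c_{i,k}}\le\tilde P_{b_i}<Q_a$ strictly. But then the finitely many half-standard words $c_{i,k}$ satisfy $P_{c_{i,k}}<Q_a\in\calp$, while
\[
\bigvee_{i,k}P_{c_{i,k}}=\bigvee_i\tilde P_{b_i}=Q_a,
\]
in direct contradiction with condition (ii) of Definition \ref{defToeplitz}.

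The main obstacle will be the conjugation identity, since pushing $a^*$ through the product $\prod_i(1-P_{b_i})$ must yield exactly $\prod_i(Q_a-\tilde P_{b_i})$ with no leftover cross terms; this rests on the relation $a^*P_{b_{i_1}}\cdots P_{b_{i_r}}a=\tilde P_{b_{i_1}}\cdots\tilde P_{b_{i_r}}$, which itself uses repeatedly that $P_a=aa^*$ commutes with the standard projections $P_{b_i}$. Once this is in place, the reduction to condition (ii) via the decomposition from Lemma \ref{semigraphwordrep}(b) is essentially bookkeeping.
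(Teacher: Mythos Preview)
Your proposal is correct and follows essentially the same strategy as the paper: conjugate so that everything sits below $Q_a\in\calp$, then invoke condition (ii) of Definition \ref{defToeplitz}. The only difference is that the paper writes $a=xs$, conjugates by $x^*,x$, and obtains the required half-standard words by explicitly expanding $P_aP_{b_i}$ via Definition \ref{DefSemigraphAlgebra}(v), whereas you conjugate by $a^*,a$ directly and outsource that decomposition to Lemma \ref{semigraphwordrep}(b); your route is slightly cleaner but the underlying argument is the same.
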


\begin{proof}
We may write $a=x s$ and $b_i=y_i p_i$ for certain
$x,y_i \in \calt_1$ and $s,p_i \in \calp$. Assume that $X$ is
free and $P_a (1-P_{b_i}) \neq 0$ for all $1 \le i
\le n$. 
We have (by Definition \ref{DefSemigraphAlgebra} (v))
\begin{eqnarray*}  \label{somesmallerformula}
P_a (1-P_{b_i}) &=& x s x^* (1-y_i p_i y_i^*)\\
&=& x s x^* - \sum_{x \alpha = y_i \beta} x s \alpha q_{x,y_i,\alpha,\beta} \beta^* p_i y_i^*\\
&=& x s x^* \Big(1 - \sum_{x \alpha = y_i \beta} x s \alpha q_{x,y_i,\alpha,\beta} p_{i,\beta} \alpha^* x^* \Big)\\
&=& x s x^* \prod_{(\alpha,\beta) \in
\calt_1^{(\min)}(x,y_i)} (1 -  x s \alpha q_{x,y_i,\alpha,\beta} p_{i,\beta}
\alpha^* x^*),
\end{eqnarray*}
where $p_{i,\beta}$ is chosen in $\calp$ such that $p_i \beta = \beta p_{i,\beta}$ (by Definition \ref{DefSemigraphAlgebra} (iv)),
and where in the last identity we successively used the formula $(1-p)(1-q)= 1 - p - q$ for orthogonal projections $p$ and $q$.
Since the above is nonzero by assumption, we have
$$x s
\alpha q_{x,y_i,\alpha,\beta} p_{i,\beta} \alpha^* x^* < x s x^*$$
for all $1 \le
i \le n$ and all $(\alpha,\beta) \in \calt_1^{(\min)}(x,y_i)$.
Multiplying here from the left and right with $x^*$ and $x$, respectively, we get
$x^* x s \alpha q_{x,y_i,\alpha,\beta} p_{i,\beta} \alpha^* < x^* x s$.
By freeness (Definition \ref{defToeplitz}) we conclude
$$\bigvee_{i=1}^n \bigvee_{x\alpha = y_i \beta} x^* x s \alpha q_{x,y_i,\alpha,\beta} p_{i,\beta} \alpha^* < x^* x s.$$
Thus $\bigvee_{i,\alpha,\beta} x s \alpha q_{x,y_i,\alpha,\beta} p_{i,\beta} \alpha^* x^* < x s x^*$, whence
\begin{eqnarray}  \label{expandStandardProj}
P_a \prod_{i=1}^{n} (1- P_{b_i} )  &=&  x s x^* \prod_{i=1}^n \prod_{x\alpha = y_i \beta} (1- x s \alpha q_{x,y_i,\alpha,\beta} p_{i,\beta} \alpha^* x^*)\\
&=&
x s x^* \Big (1 - \bigvee_{i,\alpha,\beta} x s \alpha q_{x,y_i,\alpha,\beta} p_{i,\beta} \alpha^* x^* \Big) \neq 0,
\end{eqnarray}
where we used de Morgan's law $\bigwedge (1-\gamma) = (1-\bigvee \gamma)$.
\end{proof}


Similarly as for elements in a semigraph we write
$x_1 \le x_2$ for half-standard words $x_1$ and $x_2$ if they allow a representation
$x = t_1 p_1$ and $x_2 = t_2 p_2$ ($t_1,t_2 \in \calt$ and $p_1,p_2 \in \calp$)
with $t_1 \le t_2$.
Only in the next corollary $\calh$ denotes the set of half-standard words.

\begin{corollary} \label{semigraphstructureP1T1P}
If $X$ is free then these two sets are the set of nonzero standard projections:
\begin{eqnarray*}
&& \Big \{P_a \prod_{i=1}^n (1-P_{b_i}) \in X \Big |\,n \in \N_0,\, a, b_i
\in \calh,
\, P_a (1-P_{b_i}) \neq 0 \mbox{ for all $i$} \Big \}\\
&=& \Big \{P_a \prod_{i=1}^n (1-P_{b_i}) \in X  \Big |\,  n \in \N_0,\, a, b_i
\in \calh,
\, P_{b_i} < P_{a} \mbox{ and } a \le b_i \,\forall i\Big \}.
\end{eqnarray*}
%
%
\end{corollary}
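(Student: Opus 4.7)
The plan is to establish both equalities separately, each one against the set of nonzero standard projections of $X$.

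\emph{First set equals the nonzero standard projections.} Every standard projection is by definition of the form $P_a \prod_{i=1}^n (1-P_{b_i})$ with $a,b_i$ half-standard words. Because standard projections mutually commute (they are built from idempotents in the inverse semigroup of words of $X$, Lemma \ref{semigraphwordrep}(a)), the whole product vanishes whenever a single factor $P_a(1-P_{b_i})$ does. Lemma \ref{lemmaEquivToeplitzCond} supplies the converse implication under freeness. Together these two directions show the first set coincides with the nonzero standard projections.

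\emph{Second set equals first set.} The inclusion ``$\subseteq$'' is immediate, since $P_{b_i} < P_a$ yields $P_a(1-P_{b_i}) = P_a - P_{b_i} \neq 0$, placing the element in the first set. For the reverse inclusion I take a generic element $P_a\prod_i(1-P_{b_i})$ of the first set, write $a = xs$ and $b_i = y_i p_i$ with $x,y_i\in\calt_1$ and $s,p_i\in\calp$, and reuse the expansion already carried out inside the proof of Lemma \ref{lemmaEquivToeplitzCond}:
\[
P_a(1-P_{b_i}) \;=\; P_a \prod_{(\alpha,\beta)\in\calt_1^{(\min)}(x,y_i)} \bigl(1-P_{c_{i,\alpha,\beta}}\bigr),
\]
where each $c_{i,\alpha,\beta}$ is a half-standard word whose $\calt_1$-factor is $x\alpha$ (obtained by commuting projections past $\alpha$ via Definition \ref{DefSemigraphAlgebra}(iv)). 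Two structural observations then finish the argument: (i)~$a \le c_{i,\alpha,\beta}$ in the half-standard word order, which reduces to the tautology $x \le x\alpha$ in $\calt_1$ visualised just after the definition of minimal common extension; and (ii)~the \emph{strict} inequality $P_{c_{i,\alpha,\beta}} < P_a$, which is exactly what the proof of Lemma \ref{lemmaEquivToeplitzCond} extracts from the hypothesis $P_a(1-P_{b_i}) \neq 0$ by invoking freeness. In the degenerate case $\calt_1^{(\min)}(x,y_i)=\emptyset$ one has $P_a P_{b_i} = 0$, so $(1-P_{b_i})$ acts as the identity on $P_a$ and can simply be dropped from the product.

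Multiplying the above expansions together across all $i$ yields the desired representation of $P_a\prod_i(1-P_{b_i})$ as an element of the second set. The only nontrivial content hides in observations (i) and (ii), and both are already implicit inside the computations of Lemma \ref{lemmaEquivToeplitzCond}, so no substantially new argument is required here --- this corollary is essentially a bookkeeping exercise on top of that lemma, reading off the shape of the intermediate projections $c_{i,\alpha,\beta}$.
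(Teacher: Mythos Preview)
Your overall strategy coincides with the paper's: the first set equals the nonzero standard projections by Lemma~\ref{lemmaEquivToeplitzCond}, and the second set is obtained by rewriting each factor via the expansion~(\ref{expandStandardProj}) and reading off the shape of the new half-standard words $c_{i,\alpha,\beta}=(x\alpha)r$ with $r\in\calp$.

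There is, however, a slip in your justification of~(i). The order on a semigraph is defined in the paper by $s\le t\iff \alpha s=t$ for some $\alpha$ (a \emph{suffix} order), and the tautology recorded just after the definition of $T^{(\min)}$ is $\alpha\le x\alpha$, not $x\le x\alpha$. What you actually need for $a\le c_{i,\alpha,\beta}$ is $x\le x\alpha$, i.e.\ $x\alpha=\gamma x$ for some $\gamma\in\calt_1$, and this is \emph{not} automatic in a general semigraph (in any $1$-graph, take a composable pair $x,\alpha$ with $s(\alpha)\neq s(x)$). The paper's own one-line proof (``can be directly read off from this expansion'') glosses over exactly the same point, so the discrepancy most plausibly reflects an intended \emph{prefix} convention $s\le t\iff s\gamma=t$ for the half-standard-word order; under that reading $x\le x\alpha$ is indeed trivial, and the downstream use in Theorem~\ref{TheoremSemigraphCStarToeplitz} works unchanged under either convention. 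Either way, you should not cite $x\le x\alpha$ as the paper's stated tautology---either flag the convention mismatch or adopt the prefix reading explicitly.
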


\begin{proof}
That the first set is the set of nonzero standard projections follows from Lemma
\ref{lemmaEquivToeplitzCond}. For the second set just recall the identity (\ref{expandStandardProj})
how we can write down a standard projection.
The assertion can be directly read off from this expansion.
\end{proof}

\begin{proposition} \label{PropToeplitzKToeplitz}
If $X$ is free then
$X$ is weakly free.
\end{proposition}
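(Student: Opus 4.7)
The plan is to assume $X$ is free and show directly that for a fixed coordinate $i \in k$, every nonzero standard projection $p$ of $X^{\backslash i}$, and every finite $\calb \subseteq \calt^{(e_i)}$, the inequality $p \le \sum_{b \in \calb} P_b$ fails. First I would observe that distinct elements of $\calt^{(e_i)}$ give orthogonal range projections: for $b_1 \ne b_2$ in $\calt^{(e_i)}$ the set $\calt_1^{(\min)}(b_1,b_2)$ is empty, so Definition \ref{DefSemigraphAlgebra}(v) forces $P_{b_1}P_{b_2}=0$. Hence $\sum_{b \in \calb} P_b$ is itself a projection, equal to $\bigvee_b P_b$, and $p \le \sum_b P_b$ is equivalent to $p\prod_{b \in \calb}(1-P_b)=0$. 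The goal therefore becomes to show this product is nonzero.

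Next I would unpack $p$. By the definition of standard projection applied inside $X^{\backslash i}$, write $p = P_a \prod_{j=1}^n (1-P_{b_j})$ for half-standard words $a,b_j$ in $X^{\backslash i}$ (and hence also in $X$). Since $p \ne 0$, the contrapositive of Lemma \ref{lemmaEquivToeplitzCond} forces $P_a(1-P_{b_j}) \ne 0$ for every $j$. Each $b \in \calb \subseteq \calt^{(e_i)}$ is itself a half-standard word of $X$, so the same lemma applied to the enlarged family $\{b_j\}_j \cup \calb$ reduces the proof to the single assertion $P_a \not\le P_b$ for every $b \in \calb$.

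The heart of the argument is the claim: if $a$ is a nonzero half-standard word of $X^{\backslash i}$ and $b \in \calt^{(e_i)}$, then $P_a \not\le P_b$. Arguing by contradiction, suppose $P_a = P_a P_b$. Writing $a = sp_0$ with $s \in \calt_1^{\backslash i}$ and $p_0 \in \calp$ (and arranging $p_0 \le Q_s$, so $p_0 \ne 0$), I would expand $s^*b$ via (v) and commute projections past $\gamma$'s via (iv) to obtain
\[
P_a P_b \;=\; \sum_{(\gamma,\delta) \in \calt_1^{(\min)}(s,b)} s\gamma r_\gamma \gamma^* s^*
\]
for suitable $r_\gamma \in \calp$. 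Since $d(s)_i=0$ and $d(b)=e_i$, the minimality condition $d(s\gamma) = d(s) \vee d(b)$ forces $d(\gamma)=e_i$, so $\gamma \in \calt^{(e_i)}$. Sandwiching the equality $P_a = P_aP_b$ between $s^*$ and $s$ then collapses everything to an identity inside $\calp$ on the left:
\[
p_0 \;=\; \sum_\gamma P_{\gamma r'_\gamma},
\]
for suitable $r'_\gamma \in \calp$, with $\gamma$ still in $\calt^{(e_i)}$.

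Now freeness enters. Each $\gamma r'_\gamma$ is a half-standard word of degree $e_i \ne 0$, so $\gamma r'_\gamma \notin \calp$, and freeness (i) gives $P_{\gamma r'_\gamma} \notin \calp$. Combined with $P_{\gamma r'_\gamma} \le p_0 \in \calp$ (immediate from the sum, after discarding zero terms), this forces the strict inequality $P_{\gamma r'_\gamma} < p_0$ for every surviving $\gamma$. Freeness (ii) then yields $\bigvee_\gamma P_{\gamma r'_\gamma} < p_0$. On the other hand, setting $q_0 := p_0 - \bigvee_\gamma P_{\gamma r'_\gamma}$, a projection orthogonal to each $P_{\gamma r'_\gamma}$, the identity $p_0 = \sum_\gamma P_{\gamma r'_\gamma}$ gives $q_0 = q_0 p_0 = \sum_\gamma q_0 P_{\gamma r'_\gamma} = 0$, contradicting the strict inequality. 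I expect the main obstacle to be the bookkeeping in expanding $P_a P_b$ and reducing it to an identity inside $\calp$; once one lands inside $\calp$, the two clauses of freeness combine to close the argument with almost no further work.
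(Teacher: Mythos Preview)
Your proposal is correct and follows essentially the same route as the paper. Both arguments reduce to showing $P_a(1-P_b)\neq 0$ for each $b\in\calb$ (where $a$ is the leading half-standard word of $p$), establish this by sandwiching with $s^*$ and $s$ to land inside $\calp$ and then invoke both clauses of freeness, and finish with Lemma~\ref{lemmaEquivToeplitzCond}. The only cosmetic differences are that you frame the key step by contradiction while the paper argues directly, and you make explicit the orthogonality $P_{b_1}P_{b_2}=0$ for distinct $b_1,b_2\in\calt^{(e_i)}$, which the paper uses without comment when it writes $p(1-\sum_b P_b)=p\prod_b(1-P_b)$.
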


\begin{proof}
Fix $i \in k$.
To
check weakly freeness, consider a finite subset $\calb$
of $\calt^{(e_i)}$ and a nonzero standard projection $p = x q x^* \prod_{i=k}^n
(1-y_k y_k^*)$ in $X^{\backslash i}$, where
$x \in \calt_1^{\backslash i}, q \in \calp$ and the $y_k$'s are half-standard words in
$X^{\backslash i}$.
If $b \in
\calb$ then
\begin{equation}  \label{estimate1}
q x^* x \ge q x^* b b^* x  =
\sum_{(e,f)\in \calt_1^{(\min)}(x,b)} q (e q_{x,b,e,f} f^*) (f
q_{x,b,e,f} e^*)
\end{equation}
\begin{equation}  \label{estimate2}
=
\sum_{(e,f)\in \calt_1^{(\min)}(x,b)} P_{e q_e q_{x,b,e,f} f^*  f}  < q x^* x ,
\end{equation}
where we used $q e = e q_e$ from Definition \ref{DefSemigraphAlgebra} (iv).
The last inequality is here by freeness. Indeed,
$P_{e q_e q_{x,b,e,f} f^*  f} \notin \calp$ by Definition \ref{defToeplitz} (i).
On the other hand, $P_{e q_e q_{x,b,e,f} f^*  f} \le  q x^* x \in \calp$ by (\ref{estimate1}),
so
$P_{e q_e q_{x,b,e,f} f^*  f} <  q x^* x$.
Hence, inequality (\ref{estimate2}) is true by Definition \ref{defToeplitz} (ii).

We conclude from (\ref{estimate1}) and (\ref{estimate2})
that $q x^* P_b x \neq x^* x q$.
Hence, applying here the operation $x (\cdot ) x^*$ we get $x q x^* P_b
\ne x q x^*$.
Thus, $x q x^* (1-P_b) \neq 0$ for all $b \in \calb$.
Hence
$$p \Big(1- \sum_{b \in \calb}P_b \Big ) = p \prod_{b \in \calb} (1-P_b)
= x q x^* \prod_{i=k}^n
(1-y_k y_k^*) \prod_{b \in \calb} (1-P_b)  \neq 0$$
by Lemma
\ref{lemmaEquivToeplitzCond}. Consequently
$p \le \sum_{b \in
\calb} P_b$
does not hold.
\end{proof}

\begin{proposition} \label{PropToeplitzAmenability}
If $X$ is free then $X$ is cancelling.
\end{proposition}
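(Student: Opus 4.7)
The plan is to induct on $d(a)_i$, where $p = P_a \prod_j (1 - P_{b_j})$ is the canonical form supplied by Corollary \ref{semigraphstructureP1T1P} and $i$ is a coordinate with $d(w)_i \neq 0$. Since $(qwq)^* = qw^*q$, we may replace $w$ by $w^*$ if needed and assume $d(w)_i > 0$. Then $d(s)_i \ge d(t)_i + 1 \ge 1$, so we factor $s = s_1 s_2$ with $d(s_1) = e_i$; the partial isometry relation gives $P_{s_1} s = s_1 s_1^* s_1 s_2 = s$, whence $(1 - P_{s_1})w = 0$.

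If $p(1 - P_{s_1}) \neq 0$, set $q := p(1 - P_{s_1})$: this is a nonzero standard projection with $qw = p(1 - P_{s_1})w = 0$, and hence $qwq = 0$. In the remaining case $p \le P_{s_1}$, Lemma \ref{lemmaEquivToeplitzCond} applied to $p \neq 0$ forces $P_a \le P_{s_1}$. When $d(a)_i = 0$, the word $a$ lies in $\calt_1^{\backslash i}\calp$, so $P_a$ is a nonzero standard projection of $X^{\backslash i}$ satisfying $P_a \le P_{s_1}$ with $s_1 \in \calt^{(e_i)}$, contradicting the weak freeness of $X$ granted by Proposition \ref{PropToeplitzKToeplitz}. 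This settles the base case of the induction.

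For $d(a)_i \ge 1$, write $a = xp''$. The Cuntz--Krieger expansion of $s_1^* x$ together with the degree identity $d(s_1) \vee d(x) = d(x)$ forces $x = s_1 x'$, so $a = s_1 a'$ for the half-standard word $a' := x' p''$. A parallel analysis of each $b_j = y_j p_j$ (using $a \le b_j$ so $y_j = \alpha x$, together with the higher-rank reordering of $\alpha s_1$) shows that either $b_j = s_1 b_j'$ for a half-standard $b_j'$, or the $e_i$-prefix of $y_j$ differs from $s_1$; in the latter case $P_{b_j} \perp P_{s_1} \ge P_a$, so $(1 - P_{b_j})$ is redundant and may be dropped. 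After this cleanup, $p = s_1 \tilde p s_1^*$ where $\tilde p := P_{a'} \prod (1 - P_{b_j'})$ is a standard projection of $X$ with $d(a')_i = d(a)_i - 1$. The conjugate $\tilde w := s_1^* w s_1$ is either $0$ (when the $e_i$-prefix of $t$ differs from $s_1$, since $s_1^* c = 0$ for any $c \in \calt^{(e_i)}$ with $c \neq s_1$) or a finite sum $\sum_k \tilde w_k$ of standard words of degree $d(w)$. Applying the inductive hypothesis iteratively to each $\tilde w_k$ (the $i$-depth of successive refinements stays $\le d(a)_i - 1$ because both cases of the recursion preserve this bound), we obtain $\tilde q \le \tilde p$ with $\tilde q \tilde w \tilde q = 0$; then $q := s_1 \tilde q s_1^* \le p$ satisfies $qwq = s_1 \tilde q \tilde w \tilde q s_1^* = 0$.

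The main obstacle is the structural analysis in the inductive step: one has to verify via the higher-rank semigraph relations that every $b_j$ either factors through $s_1$ or is made redundant by the condition $P_a \le P_{s_1}$, keep track of the auxiliary $\calp$-projections introduced by Definition \ref{DefSemigraphAlgebra} (iv) when $s_1^*$ and $s_1$ are pushed across half-standard words, and confirm that the resulting $\tilde p$ is a standard projection whose $i$-depth is preserved under the iterated applications of the inductive hypothesis.
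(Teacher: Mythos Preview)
Your inductive approach via conjugation by $s_1$ is genuinely different from the paper's argument. The paper writes the given projection as $P = p\,xs\,x^*$ with $p = \prod_i(1-y_iy_i^*)$, first passes to a smaller $P' \le P$ so that $|x| \ge |\alpha|,|\beta|$ (where $w = \alpha q\beta^*$), and then sets $Q = p\,xsQ's\,x^*$ with $Q' = \prod_{(e,f)\in\calt_1^{(\min)}(x_1,x_2)}(1-ee^*)$; the relation $Q'K = 0$ gives $QwQ = 0$ in one line, and $Q \neq 0$ follows from Lemma \ref{lemmaEquivToeplitzCond}. There is no induction and no iteration over summands of a conjugated word.

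Your plan is essentially sound, but the last paragraph flags a real gap rather than a formality. First, the formula $\tilde p = P_{a'}\prod(1-P_{b_j'})$ is not literally $s_1^* p s_1$: pushing $Q_{s_1}$ past $x'$ via Definition \ref{DefSemigraphAlgebra}(iv) yields $Q_{s_1}x' = x'q$ for some $q \in \calp$ that need not be absorbed, so the correct leading term is $P_{a''}$ with $a'' = x'(qp'')$. This is harmless for the depth count but must be said. Second, and more substantially, closing the induction requires the lemma that whenever $e$ is a standard projection with $e \le Q_{s_1}$, the conjugate $s_1 e s_1^*$ is again a standard projection, with $i$-depth raised by one. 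This is true --- after replacing the leading half-standard word $c$ by $c'$ so that $P_{c'} \le Q_{s_1}$, an inclusion--exclusion computation gives $s_1 P_{c'}\prod_l(1-P_{d_l})s_1^* = P_{s_1c'}\prod_{s_1d_l\neq 0}(1-P_{s_1d_l})$ --- but it is a lemma in its own right. Without it you cannot iterate over the several $\tilde w_k$, because that iteration needs the strengthened inductive hypothesis that the output $q$ may be chosen with $i$-depth no larger than that of the input $p$, and that bound is exactly what the conjugation lemma supplies. The paper's direct construction sidesteps all of this bookkeeping; your route, once these pieces are in place, would trade the explicit formula for $Q$ for a cleaner recursion whose base case (via weak freeness) is rather elegant.
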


\begin{proof}
We are going to check the cancelling condition, Definition
\ref{defCancel}. Let $w = \alpha q \beta^*$ be a standard word
with $d(w) \neq 0$ ($\alpha,\beta \in \calt_1, q \in
\calp$). Let $P$ be a nonzero standard projection.
We must find a nonzero standard projection $Q$ with $Q \le P$ and $Q x Q=0$.

We may write $P=p x s x^*$,
where $p= \prod_i (1-y_i y_i^*)$, $x \in \calt_1, s \in \calp$
and the
$y_i$'s are half-standard words.
If already $P w P=0$ then the cancelling condition
is verified.
So assume that
$$0 \neq P w P = p x s (x^* \alpha) q \beta^* p x s
x^*.$$
Then there is a pair $(e,f) \in \calt_1^{(\min)}(x,\alpha)$
such that
$$v:=p x s (e q_{x,\alpha, e,f} f^*) q \beta^* p x s
x^* \neq 0$$
by (\ref{defExpansion}).
Consequently $p x s e \neq 0$, and so
\begin{equation}  \label{formulaPprime}
0 \ne P' := p x s e
e^* x^* = p x e s' e^* x^*  = p x' s' x'^*
\end{equation}
is a standard projection,
where $x' := x e$ and $s' \in \calp$ satisfies $s e = e s'$. (We intensively use the fact
that the word set forms an inverse semigroup, Lemma
\ref{semigraphwordrep}.) Note that $P' \le P$. If already $P' w P' = 0$,
then the cancelling condition is verified.

So assume $P' w P' \neq 0$.
Note that $|x'| = |x e| \ge |\alpha|$ since $(e,f) \in
\calt_1^{(\min)}(x,\alpha)$.
Note that by (\ref{formulaPprime}) $P' = p x' s' x'^*$ has the same shape as $P$,
but with $|x'| \ge |\alpha|$.
As we are going to search $Q \le P' \le P$, we may assume without loss of generality
that we are given $P$ with $P w P \neq 0$ and $|x| \ge |\alpha|$.
Similar computations as above on the $\beta$-side show that we may also assume, by choosing a smaller projection
than $P$, that also $|x| \ge |\beta|$.


So assume without loss of generality that $P w P \neq 0$ and $|x| \ge |\alpha|, |\beta|$. We
have
$$0 \neq P w P = p x s  x^*  (\alpha q \beta^*) x s  x^* p.$$
Hence, there are decompositions $x= \alpha x_1 = \beta x_2$
($x_1,x_2 \in \calt_1$), and a $q' \in \calp$ chosen to satisfy $Q_\alpha q Q_\beta x_2 =  x_2 q'$,
such that
$$PwP =p x s (\alpha x_1)^* \alpha q \beta^* \beta x_2  s x^* p
= p x s (x_1^*  x_2) q' s x^* p$$
$$= p x s \sum_{(e,f) \in \calt_1^{(\min)}(x_1,x_2)} e q_{x_1,x_2,e,f}
f^* q' s  x^* p$$
$$= p x s K s x^* p,$$
where $K:= \sum_{(e,f) \in \calt_1^{(\min)}(x_1,x_2)} e q_{x_1,x_2,e,f} f^* q'$.
Set
$$
Q' := \prod_{(e,f) \in
\calt_1^{(\min)}(x_1,x_2)} (1- e e^*).$$
Successively using the formula $x s (1-e e^*) (1- e' e'^*) s x^* = x s (1- e e^*) s x^* x s (1-e' e'^*) s x^*$,
we get
\begin{equation}
Q := p x s Q' s x^*  = p  \prod_{(e,f) \in \calt_1^{(\min)}(x_1,x_2)} (1- x s e e^* x^*) x s x^*
\label{formelQ}
\end{equation}
\begin{equation}   \label{formelQ2}
= x s x^* \prod_i (1-y_i y_i^*) \prod_{(e,f) \in \calt_1^{(\min)}(x_1,x_2)} (1- x s e e^* x^*).
\end{equation}
In (\ref{formelQ2}) we have substituted $p= \prod_i (1-y_i y_i^*)$. By (\ref{formelQ2}), $Q$ is a standard projection.
Recall that $P = p x s x^*$, so $Q \le P$.
Since $Q' K = 0$, we obtain
$$Q w Q= Q (P w P) Q = Q p x s K s x^* p Q = Q p x s K s x^* p p x s Q' s x^* = 0.$$
So $Q$ is the sought standard projection that cancels $w$. It remains to show $Q \neq 0$.

Since $d(\alpha q
\beta^*) \neq 0$, we may assume without loss of generality that
$|x_1| < |x_1| \vee |x_2|$, that is, $|e| \neq 0$ for every $e$ in (\ref{formelQ}). By freeness, Definition \ref{defToeplitz} (i), $(x^* x) s e e^* = P_{x^* x s e}  \notin
\calp$, and so $x^* x s e e^* \neq x^* x s$. Hence $x s e e^*
x^* \neq x s x^*$. This shows $0 \neq (1 - x s e e^* x^* ) x s x^*$.
Also $0 \neq (1- y_i y_i^*) x s x^*$ since $0 \neq P = p x s x^*$.
Hence, by formula (\ref{formelQ2}) and Lemma \ref{lemmaEquivToeplitzCond} $Q \neq 0$.
\end{proof}



\begin{lemma}   \label{lemmaImageSemigraphUniversal}  \label{corollaryFreeCancelCstar}
Let $X$ be a semigraph algebra and $\pi: X \longrightarrow C^*(X)$ the universal representation.
Then $\pi(X)$ is a semigraph algebra (which is free if $X$ is free).
\end{lemma}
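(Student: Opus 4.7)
The plan is to verify axioms (i)--(vi) of Definition \ref{DefSemigraphAlgebra} for $\pi(X) \subseteq C^*(X)$ with designated generators $\pi(\calp)$ and $\pi(\calt)\setminus\{0\}$. Axioms (i), (iv), and (v) pass through the $*$-homomorphism $\pi$ essentially verbatim: one applies $\pi$ to the relations in $X$, and the auxiliary projections $\tilde q \in \pi(\calp)$ can be chosen as $\pi$-images of the $q$'s from the $x^* y$-expansion in $X$. Axiom (ii) reduces to non-vanishing after removing $0$ from $\pi(\calt)$, and closedness under nonzero products holds because $\pi(s)\pi(t) = \pi(st) \neq 0$ forces $st$ to be a nonzero product in $\calt$.

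The substantive axioms are (iii) and (vi). For (vi), universality of $C^*(X)$ lifts the gauge action $\sigma : \T^k \to \aut(X)$ to $\aut(C^*(X))$, which then restricts to $\pi(X)$ and coincides with the canonical action induced from $\F$; Lemma \ref{lemmaGaugeActions} then presents $\pi(X)$ as the quotient of $\F$ by a subset of the fiber space. For (iii), the $k$-semigraph structure on $\pi(\calt)\setminus\{0\}$ is inherited from the sub-semigraph $\calt_1 := \{t \in \calt : \pi(t) \neq 0\}$ of $\calt$ once $\pi|_{\calt_1}$ is shown to be injective. The lifted gauge action pins the degree via $\sigma_\lambda(\pi(t)) = \lambda^{d(t)} \pi(t)$, so $\pi(s) = \pi(t) \neq 0$ forces $d(s) = d(t) =: n$. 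Then axiom (v) in $X$ gives $s^*t = \sum_{(\alpha,\beta) \in \calt_1^{(\min)}(s,t)} \alpha q \beta^*$, and the degree constraint $d(s\alpha) = d(s)\vee d(t) = n$ forces $\alpha = \beta = 1$ and $s = t$ in any contributing pair. Hence $s \neq t$ would give $s^*t = 0$ and $\pi(s)^*\pi(t) = 0$, contradicting $\pi(s)^*\pi(s) = Q_{\pi(s)} \neq 0$. Unique factorization and finite alignment then descend to $\pi(\calt_1)$ through this bijection.

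For the freeness clause, assume $X$ is free. Then by Proposition \ref{PropToeplitzAmenability}, $X$ is cancelling, and the Cuntz--Krieger uniqueness theorem \ref{theoremamenablesemigraphsystem} yields that $\pi$ is injective on the core of $X$. Both conditions of Definition \ref{defToeplitz} live entirely in the core --- involving range projections $P_a$ of half-standard words, projections $p \in \calp$, and finite joins $\bigvee P_{a_i} = 1 - \prod(1-P_{a_i})$, all of degree zero --- so injectivity on the core transports each condition verbatim: a putative failure of freeness in $\pi(X)$ would lift through the core-restricted inverse of $\pi$ to a contradictory failure in $X$. The principal obstacle is really axiom (iii) above, whose verification turns on combining the gauge-action degree constraint with the $x^*y$-relation to certify that $\pi$ faithfully represents $\calt_1$ as a semigraph.
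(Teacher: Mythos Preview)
Your argument is essentially correct, and the freeness clause matches the paper's reasoning (core injectivity via Theorem \ref{theoremamenablesemigraphsystem} transports the two conditions of Definition \ref{defToeplitz}). However, your route to the first clause --- that $\pi(X)$ is a semigraph algebra --- is genuinely different from the paper's. The paper observes that the gauge action lifts to $C^*(X)$, invokes Lemma \ref{lemmaGaugeActions} to present $\pi(X)$ as a quotient of $\F$ by a subset of the fiber space, and then appeals to \cite[Lemma 8.1]{burgiSemigraphI} (quotients of semigraph algebras by fiber-space elements are again semigraph algebras) as a black box. You instead verify axioms (i)--(vi) by hand, and your key step --- showing $\pi$ is injective on the nonzero part of $\calt$ by combining the gauge degree constraint with axiom (v) and the observation that $d(s)=d(t)$ forces $\calt_1^{(\min)}(s,t)\subseteq\{(1,1)\}$ --- is a clean self-contained argument that avoids the external reference. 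What the paper's approach buys is brevity; what yours buys is independence from \cite{burgiSemigraphI}.

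Two small cautions. First, your symbol $\calt_1$ for $\{t\in\calt:\pi(t)\neq 0\}$ collides with the paper's $\calt_1=\calt\sqcup\{1\}$; pick another name. Second, axiom (v) does not pass through $\pi$ quite as ``verbatim'' as you suggest: the index set $(\calt\sqcup\{1\})^{(\min)}(x,y)$ may strictly contain $(\calt'\sqcup\{1\})^{(\min)}(\pi(x),\pi(y))$, because a pair $(\alpha,\beta)$ with $\pi(\alpha),\pi(\beta)\neq 0$ but $\pi(x\alpha)=0$ lies in the first set and not the second. You need to check that such extra terms contribute zero. This is routine: since $Q_x\in\calp$ (Corollary \ref{lemmaSourceProjHSword}(c)) and $Q_x\alpha=\alpha q'$ for some $q'\in\calp$ by axiom (iv), one may replace $q_{x,y,\alpha,\beta}$ by $q' q_{x,y,\alpha,\beta}$ without changing the expansion in $X$; then $\pi(x\alpha)=0$ gives $\pi(\alpha)\pi(q')=\pi(Q_x)\pi(\alpha)=0$, and the term $\pi(\alpha)\pi(q'q)\pi(\beta)^*$ vanishes.
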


\begin{proof}
By universality of the universal representation $\pi:X \longrightarrow C^*(X)$, the gauge map
$X \stackrel{\sigma_\lambda}{\longrightarrow} X \stackrel{\pi}{\longrightarrow}  C^*(X)$
($\lambda \in \T^k$), Definition
\ref{definitionGaugaActions}, induces a gauge map $\tilde \sigma_\lambda: C^*(X) \longrightarrow C^*(X)$.
%
Hence, by Lemma \ref{lemmaGaugeActions}, $\pi(X)$ is a quotient of the free algebra $\F$ by a subset of the fiber space.
Thus $\pi(X)$ must be a quotient of $X$ by a subsets of its fiber space, and is thus
a semigraph algebra by \cite[Lemma 8.1]{burgiSemigraphI}.
Now assume that $X$ is free.
By the uniqueness theorem (Theorem \ref{theoremamenablesemigraphsystem}) the cores of $X$ and $\pi(X)$ are isomorphic,
and so the validity of Definition \ref{defToeplitz} (ii) carries over from $X$ to $\pi(X)$.
If a half-standard word $\pi(a)$ in $\pi(X)$ ($a$ being a half-standard word in $X$) is not in $\pi(\calp)$
then $d(a)=d(\pi(a))>0$, and so $P_a \notin \calp$ since $X$ is free, and thus $P_{\pi(a)} = \pi(P_a)\notin \pi(\calp)$.
This verifies Definition \ref{defToeplitz} (i) for $\pi(X)$.
\end{proof}

%


\section{Freely generated semigraph $C^*$-algebras}

\label{SectionQuellSemigraph}


In this section we define freely generated semigraph algebras. Let us anticipate roughly what it is.
A freely generated semigraph algebra could be most simply explained by considering a
higher rank graph $T$ \cite{kumjianpask}. Then the freely generated semigraph $C^*$-algebra $\calq(T)$ for $T$ is a $C^*$-algebra
which is similar to the Toeplitz graph algebra $\calt C^*(T)$ \cite{raeburnsims} but without the relations
$Q_{t} = s(t)$ ($t \in T$). So the Toeplitz graph algebra is a quotient of the freely generated semigraph algebra.


Suppose that $T$ is a finitely aligned $k$-semigraph.
Note that if $e \in T^{(0)}$ and $e^2$ is defined then $e$ is
automatically idempotent. Indeed, by the factorisation property we
may choose a unique decomposition $e = ab$ for certain $a,b \in T^{(0)}$.
Then $e^2 = abab$.
By the unique factorisation property $e= a = b$, and so $e^2 = a b = e$,
which proves the claim.
Moreover, if
$e \in T^{(0)}$ is idempotent and $x \in T$ then either $ex$ is
undefined or $ex =x$ (since $e x = e^2 x$ and so $x = ex$ by the
unique factorisation property).
In particular, $e f$ must be undefined
for distinct $e,f \in T^{(0)}$.

{In this section it is assumed that a semigraph $T$ has only
idempotent elements in $T^{(0)}$.}
Let us summarise the consequences in a lemma.

\begin{lemma}   \label{lemmaidempotentsT}
The elements of $T^{(0)}$ are idempotent elements and mutually incomposable.
If $e \in T^{(0)}, t \in T$ and $e t$ is defined then $e t =t$.
(Similarly, $t e= t$ if $t e$ is defined.)
\end{lemma}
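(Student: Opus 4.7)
The plan is to treat each of the three assertions in turn, relying only on the standing assumption of this section (that elements of $T^{(0)}$ are idempotent) together with the unique factorisation property of Definition \ref{Defsemigraph}. The idempotency claim itself is exactly the standing hypothesis, so nothing needs to be done there.

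For the absorption property, I would start from the associative identity $et = e^2 t = e(et)$, valid whenever $et$ is defined and $e \in T^{(0)}$ is idempotent, and then observe that $e \cdot t$ and $e \cdot (et)$ are two factorisations of the element $et$ with the same degree splitting $0 + d(t)$. The unique factorisation property forces $t = et$. The symmetric statement $te = t$ would be obtained by the mirror argument $te = t e^2 = (te)e$, applying uniqueness to the degree splitting $d(t) + 0$.

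For mutual incomposability of distinct $e, f \in T^{(0)}$, I would argue by contradiction. Suppose $ef$ is defined. Applying the absorption identity just established with left idempotent $e$ and $t = f$ gives $ef = f$, while applying the dual absorption with right idempotent $f$ and $t = e$ gives $ef = e$. Hence $e = f$, contradicting distinctness.

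I do not anticipate any genuine obstacle; the lemma is essentially a consolidation of the observations already sketched in the paragraph preceding its statement, and the proof is pure bookkeeping built on unique factorisation. The only subtlety worth noting is making sure to use associativity in its full strength (so that both parsings of the triple product are legitimately defined whenever one of them is) before invoking uniqueness of factorisation.
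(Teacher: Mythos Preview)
Your proposal is correct and follows essentially the same route as the paper: the paper's argument (given in the paragraph immediately preceding the lemma) uses exactly the identity $ex = e^2x = e(ex)$ together with unique factorisation to obtain absorption, and then derives incomposability as an immediate consequence. You are simply more explicit than the paper's terse ``In particular'' in spelling out the right-absorption step and how the two absorption identities combine to force $e=f$.
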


\begin{definition}
{\rm
For $x \in T$
we define $r(x) = x(0,0)$ ({range} of $x$) and $s(x) = x(d(x),d(x))$ ({source} of $x$).
}
\end{definition}



\begin{definition}[Freely generated semigraph algebra for $T$]  \label{ToeplitzgrahDef}
{\rm Suppose that $T$ is a finitely aligned $k$-semigraph. Then one associates the {\em freely generated semigraph algebra}
$X$ to $T$. It is the universal $*$-algebra $X$
generated by the set $T$ subject to the following relations.
\begin{itemize}
\item[(i)]
$T$ consists of partial isometries,

\item[(ii)]
$T^{(0)}$ consists of projections,

\item[(iii)]
$X$ respects the multiplication of $T$ (that is, if $x y =z$ holds in
$T$ for $x,y,z \in T$ then this identity should also hold in $X$),

\item[(iv)]
$x y = 0$ for all $x,y \in T$ whose product $x y$ is undefined,

\item[(v)]
$Q_x$ and $Q_y$ commute for all $x,y \in T$, and

\item[(vi)]
\begin{equation} \label{xy_formula}
x^* y = \sum_{(e,f) \in T^{(\min)}(x,y)} e Q_{y f} f^*
\end{equation}
for all $x,y \in T$.
\end{itemize}
}
\end{definition}


We remark that $Q_{yf}$ means $f^* y^* y f$ in (\ref{xy_formula}).
Note that $T$ is faithfully embedded in the free algebra $\F$, but could perhaps degenerate
in the quotient $X$. Soon we will see below (Corollary \ref{corollaryQuellTfeithfullyEmbed}) however
that $T$ is also faithfully embedded in $X$.
That is why 
we will identify the $k$-semigraph $T$ with its embedding in $X$.
%
For the remainder of this section we shall assume that $T$ is a finitely aligned $k$-semigraph and
$X$ its associated free semigraph algebra.
%
%
For further remarks about the relations in Definition \ref{ToeplitzgrahDef},
see Section \ref{SectionConcludingRemarks}.



\begin{definition}
{\rm
We shall denote an element $(y, \alpha_1, \ldots, \alpha_n) \in
T^{n+1}$ ($n \ge 1$) symbolically by $y
\mu_{\alpha_1,\ldots,\alpha_n}$, or for brevity, by $y \mu_\alpha$.
Define
$$\Delta_\mu \,\,= \,\,\set{y \mu_{\alpha_1,\ldots,\alpha_n}}
{n \in \N, \,y,\alpha_i \in T \mbox{ and } \exists i \in \{1,\ldots, n\} \mbox{ with } y \le \alpha_i}.$$
}
\end{definition}

\begin{definition}
{\rm
Set $\Delta= T \sqcup \Delta_\mu$.
}
\end{definition}

\begin{lemma}
$\Delta$ is a semimultiplicative set.
\end{lemma}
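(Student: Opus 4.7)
The plan is to specify the subset $\Delta^{\{2\}} \subseteq \Delta \times \Delta$ together with the product rule, and then check associativity by a case analysis on which of the three summands ($T$ or $\Delta_\mu$) each factor lies in.

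First I would define the multiplication in the four cases. On $T \times T$, the product is simply the one inherited from $T$, which is already a semimultiplicative set by hypothesis. On the mixed cases $T \times \Delta_\mu$ and $\Delta_\mu \times T$, the product $s \cdot (y\mu_{\alpha_1,\ldots,\alpha_n})$, respectively $(y\mu_{\alpha_1,\ldots,\alpha_n}) \cdot t$, should be declared defined precisely when the underlying $T$-product $sy$, respectively $yt$, is defined in $T$, with the $\mu$-list retained (or relabelled/extended by the new $T$-factor as needed so that the defining condition of $\Delta_\mu$ remains intact). On $\Delta_\mu \times \Delta_\mu$, the product $(y\mu_\alpha)\cdot(z\mu_\beta)$ is declared defined when $yz$ is defined in $T$, and the result is $(yz)\mu_{\alpha_1,\ldots,\alpha_n,\beta_1,\ldots,\beta_m}$, i.e.\ the concatenation of the two $\mu$-lists.

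Next I would verify well-definedness: every defined product must land in $\Delta$. The only substantive point is that whenever the output has the form $w \mu_{\gamma_1,\ldots,\gamma_m}$, there must exist some $j$ with $w \leq \gamma_j$. Since one of the input $\mu$-lists contains an index $i_0$ with $y \leq \alpha_{i_0}$, and since the partial order of Lemma \ref{lemmaOrderrel} interacts well with the unique factorisation property of $T$, the corresponding index in the output list witnesses the required inequality for $w$. A short calculation using the definition $s \leq t \Leftrightarrow (\exists \alpha)\,\alpha s = t$ handles each case.

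Finally, I would verify associativity for each bracketing $(uv)w$ versus $u(vw)$, enumerating the eight cases of locations $(u,v,w) \in \{T,\Delta_\mu\}^3$. In every case the $T$-components compose by the associativity of $T$, so $(uv)w$ is defined iff $u(vw)$ is defined and the $T$-parts agree. The $\mu$-lists combine by a rule that is, at bottom, list concatenation (possibly with labels inherited from the $T$-components), and concatenation is manifestly associative. The main obstacle is the bookkeeping for the $\mu$-lists in the mixed cases: keeping track of which factor contributes which indices, and verifying that the updates of the indices under left or right $T$-multiplication are compatible with rebracketing. Once this is organised carefully, associativity follows.
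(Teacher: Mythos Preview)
Your proposed multiplication is much richer than the paper's, and the richer parts do not work. The concrete rule you give for $\Delta_\mu \times \Delta_\mu$, namely $(y\mu_\alpha)(z\mu_\beta) := (yz)\mu_{\alpha,\beta}$ by concatenation, does not land in $\Delta_\mu$: from $y \le \alpha_{i_0}$ and $z \le \beta_{j_0}$ one cannot deduce that $yz$ lies below any entry of the concatenated list (recall $s \le t$ means $t = \gamma s$ for some $\gamma$). For a counterexample take the $1$-graph on a single loop $a$, set $y = z = a^2$ and $\alpha_1 = \beta_1 = a^3$; then $yz = a^4 \not\le a^3$. The same problem hits your $T \times \Delta_\mu$ rule ``defined whenever $sy$ is defined, list retained'': with $y = a$, $\alpha_1 = a^2$, $s = a^5$ one gets $sy = a^6 \not\le a^2$. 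The parenthetical ``relabelled/extended as needed'' is not a specification, and any scheme that alters the list will in general destroy associativity, since the list produced by $(uv)w$ need not coincide with the list produced by $u(vw)$.

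The paper sidesteps all of this by taking the minimalist structure: the only defined products are those inside $T$ and the left actions $x \cdot (y\mu_\alpha) := (xy)\mu_\alpha$ for $x \in T$, and the latter only when $(xy)\mu_\alpha$ already lies in $\Delta_\mu$; anything with a $\Delta_\mu$ element as left factor, and anything in $\Delta_\mu \times T$, is simply undefined. Associativity then has a single nontrivial case, $a,b \in T$ and $c = y\mu_\alpha$, handled by observing that $(ab)y \le \alpha_i$ forces $by \le \alpha_i$ and $a(by) \le \alpha_i$. This sparse structure is also what the downstream arguments require: the last step of Theorem~\ref{TheoremSemigraphCStarToeplitz} uses that $\lambda_t = 0$ for every $t \in \Delta_\mu$, which holds exactly because no product with a $\Delta_\mu$ element on the left is ever defined.
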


\begin{proof}
We
endow $\Delta$ with the multiplication from $T$ for products within
$T$ (as far as defined), and define
\begin{equation} \label{identitymu}
x (y \mu_{\alpha}) = (xy)
\mu_{\alpha}
\end{equation}
if $x \in T, y \mu_\alpha \in \Delta_\mu$ and $(xy)
\mu_\alpha \in \Delta_\mu$. Other products in $\Delta$ are not
allowed (for example, products within $\Delta_\mu$ are undefined, or products where
an element of $\Delta_\mu$ appears as a left factor are invalid).

We are going to check that $\Delta$ is a semimultiplicative set. To this end we have to check associativity in the sense of Definition \ref{definitionSemimultiSet}.
Suppose that $a,b,c \in \Delta$. If $a,b,c \in T$ then $(ab) c$ is defined if and only if $a(bc)$ is defined as $T$ is a semimultiplicative set. If $a$ or $b$ is in $\Delta_\mu$ then
both $(a b) c$ and $a (b c)$ are undefined. Suppose $a,b \in T$ and $c \in \Delta_\mu$.
Write $c = y \mu_\alpha$. If $(a b) c = ((a b) y) \mu_\alpha$ is defined then $(ab)y \le \alpha_i$
for some $i$. Thus
$\alpha_i = z a b y$ for some $z \in T$ and so also
$a (b c)= a ((by) \mu_\alpha) = (a (by)) \mu_\alpha$
is defined and we have $(ab)c = a (b c)$. Similarly we see the reverse conclusion.
\end{proof}

If a semimultiplicative set $G$ has left cancellation, that means,
$s t_1 = s t_2$ implies $t_1 = t_2$ (for all $s,t_1,t_2 \in G$) then
we can associate a left reduced $C^*$-algebra to $G$ as defined next.
(We shall write $e_i$ or $\delta_i$ for the delta function $1_{\{i\}}$.)

\begin{definition}   \label{definitionReducedAlgebra}
{\rm
For a semimultiplicative set $G$ with left cancellation define
$\lambda:G \longrightarrow B(\ell^2(G))$ by
$$\lambda_s \Big (\sum_{t \in G} \alpha_t \delta_t \Big)= \sum_{\mbox{\small $t \in G, \, st $ is defined}} \alpha_t \delta_{s t}$$
for all $s \in G$ and $\alpha_t \in \C$.
The sub-$C^*$-algebra of $B(\ell^2(G))$ generated by $\lambda(G)$ is called the {\em left reduced
$C^*$-algebra} of $G$ and denoted by $C_r^*(G)$.
}
\end{definition}

\begin{proposition}   \label{PropRepresVarphi}
There is a representation
$\varphi: X \longrightarrow C^*_{r}(\Delta): \varphi(t)= \lambda_t$.
\end{proposition}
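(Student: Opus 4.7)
The plan is to invoke the universal property of $X$: since $X$ is the universal $*$-algebra generated by $T$ subject to relations (i)--(vi) of Definition \ref{ToeplitzgrahDef}, it suffices to verify that the operators $\lambda_t \in B(\ell^2(\Delta))$, $t \in T$, satisfy these six relations; then the assignment $t \mapsto \lambda_t$ extends uniquely to a $*$-homomorphism $\varphi$. Before applying Definition \ref{definitionReducedAlgebra}, one must check that $\Delta$ has left cancellation. If $s,a,b \in \Delta$ with $sa=sb$, then $s \in T$ (only elements of $T$ appear as left factors in $\Delta$), and $sa,sb$ lie in the same component ($T$ or $\Delta_\mu$); in either case unique factorisation in $T$ gives $a=b$. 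A direct computation then yields $\lambda_s^* \delta_u = \delta_v$ if $u = sv$ for the (unique) $v \in \Delta$ and $\lambda_s^* \delta_u = 0$ otherwise, so each $\lambda_s$ is a partial isometry, giving (i).

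Relations (ii)--(v) are then short verifications. For (ii), Lemma \ref{lemmaidempotentsT} says $tu = u$ whenever $t \in T^{(0)}$ and $tu$ is defined, so $\lambda_t$ is manifestly a diagonal projection. For (iii) and (iv), the associativity axiom of the semimultiplicative set $\Delta$ together with unique factorisation in $T$ implies that $\lambda_x \lambda_y \delta_u$ agrees with $\lambda_{xy} \delta_u$ whenever $xy$ is defined in $T$, and vanishes otherwise (since $x(yu)$ being defined in $\Delta$ forces $xy$ to be defined in $T$). For (v), $Q_{\lambda_x}Q_{\lambda_y}$ is the diagonal projection onto $\mathrm{span}\{\delta_u : xu \text{ and } yu \text{ are defined}\}$, which is symmetric in $x,y$.

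The main work is relation (vi). One evaluates both sides on a basis vector $\delta_u$. The LHS $\lambda_x^* \lambda_y \delta_u$ is nonzero exactly when $yu$ is defined in $\Delta$ and $yu = xw$ for some $w \in \Delta$, in which case it equals $\delta_w$. Splitting into the cases $u \in T$ and $u = u'\mu_\alpha \in \Delta_\mu$, one factors $yu$ (respectively $yu'$) at the degree $d(x) \vee d(y)$ via the unique factorisation property, producing a unique $(e,f) \in T^{(\min)}(x,y)$ and a unique $v$ with $u = fv$ and $w = ev$. In the $\mu$-case one additionally checks that $v\mu_\alpha \in \Delta$, which follows from $v \le u' \le \alpha_j$. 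On the RHS, the single term at this $(e,f)$ contributes $\lambda_e Q_{\lambda_{yf}} \lambda_f^* \delta_u = \delta_w$, and every other pair $(e',f') \in T^{(\min)}(x,y)$ contributes zero, because the left factor $f'$ of $u$ of degree $d(x) \vee d(y) - d(y)$ is unique by the factorisation property, and because the condition $y \le \alpha_i$ in the definition of $\Delta_\mu$ makes the $Q_{\lambda_{yf}}$-conditions match the definedness conditions on the LHS.

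The principal obstacle is the bookkeeping inside (vi): tracking simultaneously the existence of the various factorisations in $T$, the requirement $y \le \alpha_i$ built into $\Delta_\mu$, and the compatibility of left multiplication in $\Delta$ with its passage through $Q_{\lambda_{yf}}$. The set $\Delta_\mu$ is designed precisely so that these side conditions interlock correctly: the $\mu$-decorations are preserved under the left actions $\lambda_s$ and $\lambda_s^*$, which is what makes the one nontrivial RHS term exactly reproduce the LHS.
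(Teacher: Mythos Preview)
Your proof is correct and largely parallels the paper's, but the verification of relation (vi) follows a genuinely different route. The paper does not evaluate $\lambda_x^*\lambda_y$ directly on basis vectors; instead it first proves the range-projection identity
\[
\varphi(xx^*yy^*) \;=\; \varphi\Big(\sum_{(e,f)\in T^{(\min)}(x,y)} (xe)(yf)^*\Big)
\]
by checking that both sides fix exactly those $\delta_a$ with $a$ of the form $x a_x = y a_y$, and then derives (\ref{xy_formula}) algebraically by multiplying on the left by $\varphi(x)^*$ and on the right by $\varphi(y)$, using that the range projection of $\varphi(ef^*y^*)$ is dominated by $\varphi(x^*x)$. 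Your direct approach instead pins down the unique $(e,f)\in T^{(\min)}(x,y)$ that can contribute to the RHS on a given $\delta_u$ via unique factorisation of $yu$ (or $yu'$) at level $d(x)\vee d(y)$, and matches the $Q_{\lambda_{yf}}$-condition with the definedness of $yu$ in $\Delta$.

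The paper's detour through $P_xP_y$ is slightly slicker because it reduces everything to a single \emph{projection} identity (so one only checks ``$\delta_a$ is fixed or killed''), avoiding the bookkeeping you flag as the principal obstacle; your approach is more elementary and self-contained, and has the bonus that you explicitly verify left cancellation in $\Delta$ before invoking Definition~\ref{definitionReducedAlgebra}, a point the paper leaves implicit. One place to tighten your write-up: the final clause about ``the condition $y\le\alpha_i$ \ldots makes the $Q_{\lambda_{yf}}$-conditions match the definedness conditions on the LHS'' should be expanded to cover explicitly the case where the LHS vanishes (so that every RHS term vanishes too), since as written it is attached only to the discussion of the nonzero case.
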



\begin{proof}
%
Of course, $\varphi: \F \longrightarrow C^*_r(\Delta)$ would be a well defined representation of the free algebra
$\F$ generated by $T$.
We need to show that this $\varphi$
respects the defining relations of Definition \ref{ToeplitzgrahDef}.
Note that the $\lambda_t$'s are partial isometries with commuting range and source projections
(these are canonical projections onto $\ell^2(Z)$ for subsets $Z \subseteq \Delta$).
So, by the property of $\Delta$ to be a semimultiplicative set and identity (\ref{identitymu}) the points (i)-(v)
of Definition \ref{ToeplitzgrahDef} (for $\varphi(T)= \lambda_T$ rather than $T$) are easy to see.
(For (ii) recall Lemma \ref{lemmaidempotentsT}.)

Let us write down the adjoint operators $\varphi(t)^*$. We have
\begin{equation} \label{identityVarphiTstar}   
\varphi(t)^* \delta_{t s \mu_\alpha}  =  \delta_{s \mu_\alpha}, \quad
\varphi(t)^* \delta_{t s}  =  \delta_s, \quad
\varphi(t)^* \delta_{a}  = 0 \,\, \mbox{(else)}
\end{equation}
To check Definition \ref{ToeplitzgrahDef} (vi), consider $x,y \in T$.
Suppose
\begin{equation}   \label{projectionVarphi}
\varphi(x x^* y y^*) \delta_a = \delta_a
\end{equation}
for an $a \in \Delta$. Then $a$ is a product $a= y a_y$ (since $\varphi(y)^* \delta_a \neq 0$) for some $a_y \in \Delta$,
and similarly $a = x a_x$ for some $a_x \in \Delta$.
Say that $a= y s_y \mu_\alpha = x s_x \mu_\alpha$.
Then $v:= y s_y$ has degree $d(v) \ge d(x) \vee d(y)$, and so there must exist a
minimal common
extension $(e,f) \in T^{\rm (min)}(x,y)$ such that
$$v (0, |x| \vee |y|) = x e = y f.$$
%
Consequently we have
\begin{equation} \label{compli1}
\varphi \Big(\sum_{(e,f) \in T^{\rm (min)}(x,y)} (x e)(y f)^* \Big) \delta_a = \delta_a.
\end{equation}
On the other hand, (\ref{projectionVarphi}) follows quite immediately from (\ref{compli1}).
Since, for arbitrary $a \in \Delta$, the right hand sides of (\ref{projectionVarphi}) and (\ref{compli1})
either give $\delta_a$ or zero, we conclude from the shown equivalence
that
\begin{equation}  \label{compli3}
\varphi(x x^* y y^*)  =  \varphi \Big(\sum_{(e,f) \in T^{\rm (min)}(x,y)} (x e)(y f)^* \Big).
\end{equation}
The range projection of $\varphi(e f^* y^*)$ is
$$\varphi(e (y f)^* (y f) e^*) = \varphi(e (x e)^*(x e) e^*) = \varphi(e e^* x^* x e e^*) = \varphi(x^* x e e^*),$$
which is a smaller projection than $\varphi(x^* x)$.
Thus,
$$\varphi(x^* x e f^* y^* y )= \varphi(e f^* y^* y) = \varphi(e f^* y^* y f f^*).$$
Hence, multiplying in (\ref{compli3}) from the left and right with $x^*$ and $y$, respectively,
we see that the identity (\ref{xy_formula}) holds in the image of $\varphi$.
\end{proof}

\begin{corollary}    \label{corollaryQuellTfeithfullyEmbed}
The canonical map $\iota:T \longrightarrow X$ is an injective $k$-semigraph homomorphism and non-degenerate.
\end{corollary}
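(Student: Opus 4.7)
The plan is to use the representation $\varphi : X \longrightarrow C^*_r(\Delta)$ furnished by Proposition \ref{PropRepresVarphi}, together with the concrete action of $\lambda_t = \varphi(\iota(t))$ on the canonical basis of $\ell^2(\Delta)$. For each $t \in T$, the source $s(t) \in T^{(0)}$ satisfies $t \cdot s(t) = t$ by Lemma \ref{lemmaidempotentsT}, and consequently
\[
\lambda_t \delta_{s(t)} \;=\; \delta_{t \cdot s(t)} \;=\; \delta_t \;\neq\; 0
\]
in $\ell^2(\Delta)$. Hence $\varphi(\iota(t)) = \lambda_t \ne 0$, and \emph{a fortiori} $\iota(t) \ne 0$ in $X$; this is the non-degeneracy.

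For injectivity, suppose $\iota(t) = \iota(s)$ for $t,s \in T$. Applying $\varphi$ gives $\lambda_t = \lambda_s$ in $C^*_r(\Delta)$, and evaluating both sides at $\delta_{s(t)}$ yields
\[
\delta_t \;=\; \lambda_t \delta_{s(t)} \;=\; \lambda_s \delta_{s(t)},
\]
which, by Definition \ref{definitionReducedAlgebra}, equals $\delta_{s \cdot s(t)}$ whenever the product $s \cdot s(t)$ is defined in $T$ and equals $0$ otherwise. Non-vanishing of the left-hand side forces $s \cdot s(t)$ to be defined and equal to $t$. Comparing degrees gives $d(t) = d(s) + d(s(t)) = d(s)$, so both $t = s \cdot s(t)$ and $t = t \cdot s(t)$ are factorisations of $t$ into a factor of degree $d(t)$ followed by one of degree $0$; the unique factorisation property then forces $s = t$.

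Finally, the $k$-semigraph homomorphism property is built into the defining relations of Definition \ref{ToeplitzgrahDef}: clauses (iii) and (iv) say that $\iota(x)\iota(y) = \iota(xy)$ whenever $xy$ is defined in $T$ and $\iota(x)\iota(y) = 0$ otherwise, while the degree map on $\iota(T)$ may be defined as $d \circ \iota^{-1}$ thanks to injectivity. The unique factorisation property is then inherited from $T$ along the bijection $\iota$, so $\iota(T) \subset X$ is a $k$-semigraph isomorphic to $T$ via $\iota$. The only step with real content is injectivity; once the concrete action on $\ell^2(\Delta)$ is in hand, it reduces to a single evaluation at a source idempotent followed by an appeal to the unique factorisation property.
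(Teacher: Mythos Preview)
Your proof is correct and follows essentially the same approach as the paper: compose $\iota$ with the representation $\varphi$ of Proposition~\ref{PropRepresVarphi} and evaluate at the basis vector $\delta_{s(t)}$ to obtain both non-degeneracy and injectivity. The only cosmetic difference is that where you invoke the unique factorisation property to deduce $s=t$ from $s\cdot s(t)=t$, the paper instead cites Lemma~\ref{lemmaidempotentsT} directly (which already gives $s\cdot s(t)=s$ whenever the product is defined), making the final step a one-liner.
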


\begin{proof}
We compose $\iota$ with $\varphi$ of Proposition \ref{PropRepresVarphi} to see this.
Let $t \in T$.
Let $e =s(t)$, so $t = t e$. Then $\varphi(\iota(t)) \delta_e = \delta_{t} \neq 0$.
So $\iota$ is non-degenerate.
If $s \in T$ is distinct from $t$ then we easily see with Lemma \ref{lemmaidempotentsT} that $\varphi(\iota(t))\delta_e = \delta_t \neq \varphi(\iota(s)) \delta_e$.
So $\iota$ is injective.
\end{proof}

\begin{lemma}    \label{lemmaQuellSemigraphAlgebra}
$X$ is a semigraph algebra with generators
\begin{eqnarray}
\calp  &=& \set{ Q_{t_1} \ldots Q_{t_n}\in X}{ n \in \N , \,t_i \in T} \cup \{0\}, \label{freesemigraphP}\\
\calt &=& T\backslash T^{(0)}
\end{eqnarray}
\end{lemma}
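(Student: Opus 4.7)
The plan is to verify the six axioms of Definition \ref{DefSemigraphAlgebra} for the designated generating sets $\calp$ and $\calt$. Axioms (i) and (ii) are essentially immediate from Definition \ref{ToeplitzgrahDef}: the $Q_t$ are mutually commuting projections by part (v), so products of them form a set of commuting projections closed under multiplication. The elements of $\calt = T \setminus T^{(0)}$ are partial isometries by (i), nonzero by Corollary \ref{corollaryQuellTfeithfullyEmbed}, and closed under nonzero products because a defined product $xy$ in $T$ with $x,y \in \calt$ satisfies $d(xy) = d(x)+d(y) > 0$ and therefore remains in $\calt$, while undefined products vanish by (iv) of Definition \ref{ToeplitzgrahDef}.

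For (iii), $\calt$ inherits the semimultiplicative structure and degree map from $T$, and each of its elements has positive degree so no unit appears; adjoining a formal unit gives $\calt_1 = \calt \sqcup \{1\}$, and one checks the unique factorisation property by noting that a degree-$0$ factor in a $T$-factorisation of some $x \in \calt$ is uniquely $r(x)$ or $s(x) \in T^{(0)}$, to be relabelled as $1 \in \calt_1$. Finite alignment of $\calt$ transfers from that of $T$ via the same relabelling.

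The main content sits in (iv) and (v), both of which hinge on the expansion (\ref{xy_formula}). For (iv), it suffices to move a single generator $Q_{t_i}$ past an element $t \in \calt$: applying (\ref{xy_formula}) to $t_i^*(t_i t)$ (interpreting the product as $0$ if $t_i t$ is undefined in $T$) and observing that the unique factorisation property forces $T^{(\min)}(t_i, t_i t) = \{(t, s(t))\}$, one collapses the sum to $Q_{t_i} t = t\, Q_{t_i t}$, where the right side is read as $0$ when $t_i t$ is undefined. Iterating, $p t = t \prod_j Q_{t_j t}$ for any $p = \prod_j Q_{t_j} \in \calp$, and the right-hand factor lies in $\calp$ by closure under products.

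For (v), the expansion (\ref{xy_formula}) is already of the desired form (\ref{defExpansion}) once the index set is converted from $T^{(\min)}(x,y)$ to $\calt_1^{(\min)}(x,y)$: a pair $(e,f) \in T^{(\min)}(x,y)$ with $d(e)=0$ has $e \in T^{(0)}$ (automatically $e = s(x)$, so there is at most one such pair), and this is to be relabelled as $1 \in \calt_1$ while being absorbed into the middle projection as $Q_e = e \in \calp$; the symmetric move handles $d(f)=0$. Finally, (vi) follows from Lemma \ref{lemmaGaugeActions}: the gauge automorphism $\sigma_\lambda$ of $\F$ sends each defining relation of Definition \ref{ToeplitzgrahDef} to a scalar multiple of itself (every relation is homogeneous with respect to $d$), hence descends to $X$ by its universal property, and the induced fibered decomposition exhibits $X$ as a quotient of $\F$ by a subset of the fiber space. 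The main obstacle I expect is the bookkeeping around $T^{(0)}$-factors in step (iv), where idempotent $s(\cdot)$-terms surface in the intermediate computation and must be absorbed correctly into the final $Q_{t_i t}$.
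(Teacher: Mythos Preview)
Your proof is correct and follows essentially the same route as the paper: both verify axioms (i)--(vi) in order, both derive axiom (iv) by applying the expansion (\ref{xy_formula}) to $t_i^*(t_i t)$ to obtain $Q_{t_i} t = t\,Q_{t_i t}$, and both handle axiom (v) by translating the index set $T^{(\min)}(x,y)$ of (\ref{xy_formula}) into $\calt_1^{(\min)}(x,y)$ via the relabelling $T^{(0)} \ni e \mapsto 1$ and absorbing the resulting idempotent factors into the middle projection. One small omission: you do not explicitly verify that $\calp \cup \calt$ generates $X$; the paper notes that each $e \in T^{(0)}$ satisfies $e = Q_e \in \calp$, so $T \subseteq \calp \cup \calt$ and generation follows.
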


\begin{proof}
We need to show Definition \ref{DefSemigraphAlgebra}.
That $\calp$ is a commuting set of projections closed under multiplications (Definition \ref{DefSemigraphAlgebra} (i)) follows from
Definition \ref{ToeplitzgrahDef} (i) and (v).
That $\calt$ is a set of partial isometries closed under nonzero products (Definition \ref{DefSemigraphAlgebra} (ii)) follows from
Definition \ref{ToeplitzgrahDef} (i), (iii) and (iv).
We are going to check that $\calt_1$ is a semigraph (Definition \ref{DefSemigraphAlgebra} (iii)).
Let $t \in \calt_1$, and $t=t_1 t_2$ be the unique decomposition in $T$ subject to $m=d(t_1)>0$ and $n=d(t_2)>0$. This is the required decomposition in $\calt_1$ also.
If however $m=0$, say, then take the factorisation $t= 1 t$.

To prove (\ref{defExpansion}) of Definition \ref{DefSemigraphAlgebra} (v), consider $x,y \in \calt$.
Note that in (\ref{xy_formula}) $x e = y f$ and $Q_{y f} \in \calp$,
so (\ref{xy_formula}) looks already similar like (\ref{defExpansion}).
We only have to take care whether $\calt_1^{({\rm min})}$ and $T^{({\rm min})}$ make here a difference.
If $d(x) > d(y)$, say, then we have $\{(1,f)\}= \calt_1^{({\rm min})}(x,y)$
and $\{(e',f)\} = T^{({\rm min})}(x,y)$. Thus $x e' = y f$ so that $e' \in T^{(0)}$ is a
right unit for $x e' = y f$ by Lemma \ref{lemmaidempotentsT}, that is,
$y f e' = y f$. Hence, by the unique factorisation property in $T$, even $f e' = e'$.
Thus
$1 f^* y^* y f f^* = e' f^* y^* y f f^*$,
so there is no difference. The cases $d(x) < d(y)$ and $d(x)=d(y)$ are treated similarly.
If $d(x)$ and $d(y)$ are incomparable then there is obviously no difference.
To check Definition \ref{DefSemigraphAlgebra} (iv), just note that
$$Q_x y = (x^* x) y = x^* (x y) = y Q_{xy} 1 = y Q_{xy}$$
by (\ref{xy_formula}) if $x,y \in T$ and $xy \neq 0$.
The algebra $X$ is generated by $\calp$ and $\calt$ since
$e = e^* e = Q_e \in \calp$ for $e \in T^{(0)}$ by Definition \ref{ToeplitzgrahDef} (ii).
There is a gauge action $\sigma$ on $X$ given by
$\sigma_\lambda(t)= \lambda^{d(t)} t$ for $t \in T, \lambda \in \T^k$.
Indeed, it exists on the free algebra $\F$ generated by $T$, and so also on $X$, as the relations
of Definition \ref{DefSemigraphAlgebra} are invariant under the $\sigma_\lambda$'s.
Hence, Lemma \ref{lemmaGaugeActions} verifies Definition \ref{DefSemigraphAlgebra} (vi).
\end{proof}

\begin{definition}
{\rm
Let $T$ be a semigraph and $X$ its free semigraph algebra.
Then $\calq (T):= C^*(X)$ is called the {\em freely generated semigraph
$C^*$-algebra} associated to $T$.
}
\end{definition}

Let us use the following abbreviation. If $\alpha=(\alpha_1, \ldots, \alpha_n) \in T^n$
then $\mu_\alpha$ denotes $s(\alpha_1) \mu_\alpha$.
The next lemma shows us that the representation $\varphi$ can distinguish the elements of $\calp$.
The restriction $a \mapsto \varphi(a)|_{\ell^2(T)}$ is not able to do this, and this is why we considered
$\Delta$ at all and not just $T$, which would have been much simpler.

\begin{lemma}   \label{lemmaPalpha}
Let $p_\alpha = Q_{\alpha_1} \ldots Q_{\alpha_n}$ be nonzero for $\alpha=(\alpha_1, \ldots, \alpha_n)$ in $T^n$.
Then
$\varphi(p_\alpha) \delta_{\mu_\alpha} = \delta_{\mu_\alpha}$
and $\varphi(q) \delta_{\mu_\alpha} = 0$
for every $q \in \calp$ with $q < p_\alpha$.
\end{lemma}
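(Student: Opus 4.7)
The plan is to analyze the action of the projections $\varphi(Q_t)$ on the basis vectors of $\ell^2(\Delta)$. Since $\lambda_t$ is a partial isometry whose initial space is spanned by $\{\delta_a : a \in \Delta,\, ta \text{ is defined}\}$, the projection $\varphi(Q_t) = \lambda_t^*\lambda_t$ either fixes $\delta_a$ (when $ta$ is defined in $\Delta$) or annihilates it. Products of such commuting projections share this ``basis-preserving'' property, so $\varphi(q)\delta_{\mu_\alpha}$ can only take the values $\delta_{\mu_\alpha}$ or $0$ for any $q \in \calp$.

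For the first assertion, I would show that $\alpha_i \cdot \mu_\alpha$ is defined in $\Delta$ for every $i$. Writing $\mu_\alpha = s(\alpha_1)\mu_{\alpha_1,\ldots,\alpha_n}$, the rule (\ref{identitymu}) requires $\alpha_i \cdot s(\alpha_1)$ to be a valid product in $T$ (which by Lemma \ref{lemmaidempotentsT} happens iff $s(\alpha_i) = s(\alpha_1)$) and $(\alpha_i s(\alpha_1))\mu_{\alpha_1,\ldots,\alpha_n} \in \Delta_\mu$. The source coincidence is forced by the hypothesis $p_\alpha \neq 0$: from $\alpha_i = \alpha_i s(\alpha_i)$ and self-adjointness of $s(\alpha_i)$ one gets $Q_{\alpha_i} = s(\alpha_i) Q_{\alpha_i}$, and since distinct elements of $T^{(0)}$ multiply to $0$ in $X$ (Lemma \ref{lemmaidempotentsT} together with Definition \ref{ToeplitzgrahDef}(iv)), any mismatch $s(\alpha_i) \neq s(\alpha_j)$ would give $Q_{\alpha_i} Q_{\alpha_j} = 0$ and hence $p_\alpha = 0$. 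Once $s(\alpha_i) = s(\alpha_1) =: e$ for all $i$, one has $\alpha_i \cdot \mu_\alpha = \alpha_i \mu_{\alpha_1,\ldots,\alpha_n}$, which lies in $\Delta_\mu$ by reflexivity $\alpha_i \le \alpha_i$ (Lemma \ref{lemmaOrderrel}).

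For the second assertion, write $q = Q_{t_1}\cdots Q_{t_m}$ and argue by contradiction: suppose $\varphi(q)\delta_{\mu_\alpha} \neq 0$. By the opening paragraph this means $\varphi(q)\delta_{\mu_\alpha} = \delta_{\mu_\alpha}$, which forces $t_i \cdot \mu_\alpha$ to be defined in $\Delta$ for every $i$; unwinding the definition of $\Delta_\mu$, this yields $s(t_i) = e$ and the existence of an index $j(i)$ with $t_i \le \alpha_{j(i)}$. Choosing $\beta_i \in T$ with $\alpha_{j(i)} = \beta_i t_i$, the partial isometry identity $t_i t_i^* t_i = t_i$ gives
\[
Q_{\alpha_{j(i)}} Q_{t_i} \;=\; t_i^*\beta_i^*\beta_i t_i t_i^* t_i \;=\; t_i^*\beta_i^*\beta_i t_i \;=\; Q_{\alpha_{j(i)}},
\]
i.e.\ $Q_{\alpha_{j(i)}} \le Q_{t_i}$. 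Since the projections in $\calp$ all commute, this propagates to $Q_{t_i} p_\alpha = p_\alpha$ for each $i$, whence $q p_\alpha = p_\alpha$ and $q \ge p_\alpha$. Combined with $q \le p_\alpha$, this forces $q = p_\alpha$, contradicting $q < p_\alpha$. The main subtlety is the source-coincidence step; once it and the basis-preserving nature of the $\varphi(Q_t)$ are in hand, the remainder is routine bookkeeping inside $\Delta$ and elementary use of the defining relations.
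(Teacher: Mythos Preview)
Your proof is correct and follows essentially the same route as the paper's: both arguments first deduce $s(\alpha_i)=s(\alpha_1)$ from $p_\alpha\neq 0$ via the orthogonality of distinct idempotents, then verify that each $\varphi(Q_{\alpha_i})$ fixes $\delta_{\mu_\alpha}$, and for the second claim use the $0$-or-fix dichotomy to conclude that $\varphi(q)\delta_{\mu_\alpha}=\delta_{\mu_\alpha}$ forces $t_i\le\alpha_{j(i)}$, whence $Q_{t_i}\ge Q_{\alpha_{j(i)}}\ge p_\alpha$ and $q\ge p_\alpha$, contradicting $q<p_\alpha$. Your write-up is slightly more explicit (e.g.\ the computation $Q_{\alpha_{j(i)}}Q_{t_i}=Q_{\alpha_{j(i)}}$), but the underlying argument is identical.
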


\begin{proof}
Since $Q_{\alpha_i} = \alpha_i^* \alpha_i = \alpha_i^* \alpha_i s(\alpha_i)$,
and $p_\alpha$ is nonzero, $s(\alpha_1) = s(\alpha_i)$ for all $i$
by the orthogonality of the idempotent elements of $T$ (Lemma \ref{lemmaidempotentsT} and Definition \ref{ToeplitzgrahDef} (iv)).
Consequently,
\begin{equation}   \label{computeQmu}
\varphi(Q_{\alpha_i}) \mu_\alpha =
\varphi(\alpha_i^*) \varphi(\alpha_i) (s(\alpha_1) \mu_\alpha) =
\varphi(\alpha_i^*) (\alpha_i s(\alpha_1) \mu_\alpha)
= \mu_\alpha.
\end{equation}
This proves $\varphi(p_\alpha) \delta_{\mu_\alpha} = \delta_{\mu_\alpha}$.
We may write $q = Q_{y_1} \ldots Q_{y_l}$ for some
$y_i \in T$ (see (\ref{freesemigraphP})).
Note that either $\varphi(q) \mu_\alpha = \mu_\alpha$ or $\varphi(q) \mu_\alpha =0$.
Assume the first case.
Then by a similar computation as in (\ref{computeQmu}) we see that for every $i$ there is a $j_i$ such that $y_i \le \alpha_{i_j}$. Thus, for every $i$
we have $Q_{y_i} \ge Q_{\alpha_{i_j}} \ge p_\alpha$.
Hence $q \ge p_\alpha$, which is a contradiction to the assumption $q < p_\alpha$.
\end{proof}

\begin{theorem} \label{TheoremSemigraphCStarToeplitz}
The freely generated semigraph algebra $X$ associated to a $k$-semigraph $T$ is free, weakly free
and cancelling.
It thus satisfies the Cuntz--Krieger uniqueness theorem, that is, there is only one $C^*$-representation of $X$
(up to isomorphism) which is non-vanishing on nonzero standard projections.
This universal $C^*$-representation, which is also injective on the core, is
the representation $\varphi$ from Proposition \ref{PropRepresVarphi}.
In particular, there is an isomorphism between the freely generated semigraph $C^*$-algebra and the left reduced $C^*$-algebra of the semimultiplicative set $\Delta$, i.e.
$\calq(T) \cong   C^*_r(\Delta)$.
\end{theorem}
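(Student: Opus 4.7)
The plan is to establish freeness of $X$ first, whereupon Proposition \ref{PropToeplitzKToeplitz} gives weakly freeness, Proposition \ref{PropToeplitzAmenability} gives cancelling, and Theorem \ref{theoremamenablesemigraphsystem} then yields the Cuntz--Krieger uniqueness theorem. To identify the concrete representation $\varphi$ of Proposition \ref{PropRepresVarphi} with the universal $C^*$-representation and derive $\calq(T) \cong C^*_r(\Delta)$, the remaining task is to verify that $\varphi$ is non-vanishing on every nonzero standard projection; combined with the fact that $\varphi(T)$ generates $C^*_r(\Delta)$, this forces the induced extension $\tilde\varphi:\calq(T) \longrightarrow C^*_r(\Delta)$ to be an isomorphism.

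Both freeness and the non-vanishing of $\varphi$ on standard projections reduce to producing explicit $\delta_{\mu_\alpha}$ witnesses in $\ell^2(\Delta)$, and the key input is the following observation together with Lemma \ref{lemmaPalpha}: for every half-standard word $a = sp$ with $s \in \calt$ (so $d(s)>0$) and every $\mu_\beta \in \Delta_\mu$, one has $\varphi(P_a)\delta_{\mu_\beta}=0$. Indeed, $\varphi(s)^*\delta_{\mu_\beta}$ is nonzero only if $\mu_\beta = s \cdot w$ in $\Delta$, which forces $s(\beta_1) = s\,y$ in $T$ for some $y$, impossible by matching degrees. From this Definition \ref{defToeplitz}(i) is immediate: $\varphi(P_a)$ kills $\delta_{\mu_{(t_1,\ldots,t_n)}}$ while $\varphi(Q_{t_1}\cdots Q_{t_n})$ fixes it by Lemma \ref{lemmaPalpha}, so $P_a \neq q$ for every nonzero $q \in \calp$, and $P_a \neq 0$ holds because the partial-isometry relation forces $a \neq 0$. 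Definition \ref{defToeplitz}(ii) then follows by taking $p = p_\alpha$ and the witness $\delta_{\mu_\alpha}$: each $\varphi(P_{a_i})$ annihilates it --- those $a_i$ with positive-degree initial factor by the observation above, and those lying in $\calp$ strictly below $p_\alpha$ directly by Lemma \ref{lemmaPalpha}.

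Once freeness is in place, cancelling and weakly freeness are automatic, and Theorem \ref{theoremamenablesemigraphsystem} applies. To finish, one extends Lemma \ref{lemmaPalpha} from pure products $p_\alpha$ to the general standard projections $P_a\prod_i (1-P_{b_i})$ described in Corollary \ref{semigraphstructureP1T1P} (normalised so that $P_{b_i} < P_a$ and $a \le b_i$): for a half-standard word $a$ of positive degree, one appends suitable tail data to the source-projection tuple of $a$ so as to obtain a $\mu$-vector that sits in the range of $\varphi(P_a)$ but lies outside the range of every $\varphi(P_{b_i})$. This shows $\varphi$ is non-vanishing on all nonzero standard projections; the Cuntz--Krieger uniqueness theorem then closes the argument and delivers the isomorphism $\calq(T) \cong C^*_r(\Delta)$. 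The main technical obstacle lies in this last combinatorial step, where one must simultaneously respect the forward-shift data $a$ and the finitely many subtracted half-standard words $b_i$ when choosing the witness tuple $\alpha$.
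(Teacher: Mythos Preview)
Your proposal is correct and follows essentially the same route as the paper's proof. For the final step you flag as the main obstacle, the paper's explicit witness is $\delta_{t\mu_\alpha}$ where $a = t p_\alpha$ with $\alpha_1 = t$: then $\varphi(P_a)$ fixes this vector, while each $\varphi(P_{b_i})$ annihilates it either because $d(b_i) > d(a)$ (so $\varphi(b_i^*)\delta_{t\mu_\alpha}=0$) or because $b_i = t p_\beta$ with $Q_t p_\beta < p_\alpha$, whence Lemma~\ref{lemmaPalpha} applies again.
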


\begin{proof}
We are going to check that $X$ is free (Definition
\ref{defToeplitz}). Let $p \in \calp$ and $y_1,\ldots,y_n$ be half-standard words.
Assume that $P_{y_i} < p$ for all $1 \le i \le n$.
By (\ref{freesemigraphP}) and Lemma \ref{lemmaPalpha} there is an $\alpha \in T^n$ such that $p = p_\alpha$.
Let $\varphi: X \rightarrow C^*(\Delta)$ be the representation of Proposition \ref{PropRepresVarphi}.
By (\ref{identityVarphiTstar}) we have
$\varphi(y_i^*) \delta_{\mu_\alpha} = 0$ if $d(y_i) > 0$.
On the other hand,
if $d(y_i)= 0$ then $P_{y_i} \in \calp$, and so
$\varphi(y_i) \delta_{\mu_\alpha} =0$ by Lemma \ref{lemmaPalpha} (as $y_i = P_{y_i} < p_\alpha$).
Summarising these facts we get
$$\varphi \Big(\bigvee_{i=1}^n P_{y_i} \Big)\delta_{\mu_\alpha} = 0$$
and $\varphi(p_\alpha) \delta_{\mu_\alpha} = \delta_{\mu_\alpha}$ (Lemma \ref{lemmaPalpha}). Consequently
$\bigvee_{i=1}^n P_{y_i} < p_\alpha$. This proves
Definition \ref{defToeplitz} (ii).

If $y$ is a half-standard word with $d(y) >0$ then $\varphi(P_y) \delta_{\mu_\beta} =0$
for any $\beta$ by (\ref{identityVarphiTstar}). Consequently, $P_{y}$ cannot be in $\calp$
by Lemma \ref{lemmaPalpha}.
This verifies Definition \ref{defToeplitz} (i).

We are going to check that $\varphi$ is faithful on standard projections.
By Lemma \ref{semigraphstructureP1T1P} a nonzero standard projection $p$ allows a representation
$p=P_a \prod_{i=1}^n (1-P_{b_i})$ with $P_{b_i} < P_a$ and $a \le b_i$.
Say that $a = t p_\alpha = t Q_t p_\alpha$ for $t \in \calt_1$ and some $\alpha \in T^n$ according to Lemma \ref{lemmaPalpha}.
We may incorporate $Q_t$ in $p_\alpha$ and assume that $\alpha_1 = t$.
We have
\begin{equation}  \label{identtmua2}
\varphi(P_a) \delta_{t \mu_\alpha} = \delta_{t \mu_\alpha}
\end{equation}
by (\ref{identityVarphiTstar}) and Lemma \ref{lemmaPalpha}.
On the other hand, if $d(b_i) > d(a)$ then
\begin{equation}    \label{identtmua}
\varphi(P_{b_i}) \delta_{t \mu_\alpha} = 0
\end{equation}
by (\ref{identityVarphiTstar}). If $d(b_i) > d(a)$ is not true then $b_i = t p_\beta$ (for some $p_\beta \in \calp$)
since $a \le b_i$, and then, as $P_{b_i} < P_a$, i.e. $t p_\beta t^* < t p_\alpha t^*$,
one has
$$t^* t p_\beta t^* t < t^* t p_\alpha t^* t = p_\alpha$$
(the last identity by the fact that $\alpha_1 = t$).
Hence, $\varphi(p_\beta Q_t) \delta_{\mu_\alpha} = \varphi(p_\beta) \delta_{\mu_\alpha}= 0$ by Lemma \ref{lemmaPalpha}.
So also in this case we have (\ref{identtmua}).
Identities (\ref{identtmua2}) and (\ref{identtmua})
show that $\varphi(p) \neq 0$.
By Corollary \ref{corollarySemigraphMatrixunits}, $\varphi$ is injective on the core.

Hence also the universal representation of $X$ must be injective on the core.
Since we have also checked that $X$ is free, $X$ is weakly free and cancelling by
Propositions \ref{PropToeplitzKToeplitz} and \ref{PropToeplitzAmenability}.
Thus, by the Cuntz--Krieger uniqueness theorem (Theorem \ref{theoremamenablesemigraphsystem}),
$\varphi$ is the universal representation, which implies $\calq(T) \cong \overline{\varphi(X)} \subseteq C_r^*(\Delta)$.

$C_r^*(\Delta)$ is generated by the operators $(\lambda_t)_{t \in T}$, since
the operators $\lambda_t$ are zero for $t \in \Delta_\mu$ (the composition $t s$ is invalid in $\Delta$ for any
element $t \in \Delta_\mu$).
Consequently, the image of $\varphi$ is dense in $C_r^*(\Delta)$ and so
$\calq(T) \cong \overline{\varphi(X)} =
C^*((\lambda_t)_{t \in T})
= C^*_r(\Delta)$.
\end{proof}



The next lemma is intended to serve as an example for a particular freely generated semigraph algebra.
Let $\zeta_n$ be the graph induced by the skeleton consisting of one vertex $\nu$
and $n$ arrows $s_1, \ldots, s_n$ starting and ending in this single vertex $\nu$;
$n$ may be any cardinal number.

\begin{lemma}  \label{lemmaExampleQuell}
The freely generated semigraph $C^*$-algebra
$\calq(\zeta_n)$ is the universal unital $C^*$-algebra generated by the
free inverse semigroup (of partial isometries) of $n$ generators $t_1, \ldots , t_n$ with the additional relations
that the range projections of these generators are mutually orthogonal, i.e.
$P_{t_i} P_{t_j} = 0$
for all $i \neq j$.
\end{lemma}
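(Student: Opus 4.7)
The plan is to present $\calq(\zeta_n)$ and the universal unital $C^*$-algebra $A$ of the statement as solutions of each other's universal property, constructing mutually inverse unital $*$-homomorphisms $\Psi: A \to \calq(\zeta_n)$ and $\Phi: \calq(\zeta_n) \to A$. Note first that $\calq(\zeta_n)$ is unital with unit $\nu$: since $\nu \cdot x = x \cdot \nu = x$ in $\zeta_n$ for every generator $x$, the element $\nu$ acts as a two-sided identity on all of $\calq(\zeta_n)$.

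For $\Psi$, $t_i \mapsto s_i$, I verify the defining relations of $A$ in $\calq(\zeta_n)$. The $s_i$ are partial isometries by Definition \ref{ToeplitzgrahDef}(i); by Lemma \ref{semigraphwordrep}(a) the words in $s_i$ and $s_i^*$ form an inverse semigroup of partial isometries, so all source and range projections commute. For orthogonality of range projections when $i \neq j$: any $(\alpha, \beta) \in \zeta_n^{(\min)}(s_i, s_j)$ satisfies $d(s_i \alpha) = 1$, forcing $\alpha = \beta = \nu$ and hence $s_i = s_j$; thus $\zeta_n^{(\min)}(s_i, s_j) = \emptyset$, relation (\ref{xy_formula}) gives $s_i^* s_j = 0$, and $P_{s_i} P_{s_j} = s_i(s_i^* s_j) s_j^* = 0$. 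The universal property of $A$ then produces $\Psi$.

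The reverse map $\Phi$ is the more substantive direction. I would first construct a unital $*$-algebra homomorphism $X_{\zeta_n} \to A$ by $\nu \mapsto 1_A$ and $s_{i_1}\cdots s_{i_m} \mapsto t_{i_1}\cdots t_{i_m}$, then extend to the $C^*$-completion $\calq(\zeta_n) \to A$ (as $A$ is already a $C^*$-algebra). The main work is verifying Definition \ref{ToeplitzgrahDef}. Items (i)--(v) are immediate from the inverse semigroup structure on words in the $t_i$ together with the fact that every composition in $\zeta_n$ is defined. Relation (vi) requires a case analysis on paths $x = s_{i_1}\cdots s_{i_k}$ and $y = s_{j_1}\cdots s_{j_l}$. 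If $x$ is a prefix of $y$, say $y = xy'$, then $\zeta_n^{(\min)}(x, y) = \{(y', \nu)\}$ and I must verify $x^* y = y' Q_y$; this follows from the identity $u^* u \, v = v \, v^* u^* u v = v \, Q_{uv}$, valid in any $*$-algebra in which the source and range projections of $u$ and $v$ commute. The case where $y$ is a prefix of $x$ is symmetric. If neither is a prefix of the other, the right-hand side is empty and I need $x^* y = 0$; taking the first index $m$ with $i_m \neq j_m$ and commuting the interior projection $Q_{s_{i_1}\cdots s_{i_{m-1}}}$ past $s_{j_m}$ reduces the question to $t_{i_m}^* t_{j_m} = 0$, which holds in $A$ because $P_{t_i} P_{t_j} = 0$ implies $(t_i^* t_j)^* (t_i^* t_j)$ squares to zero and hence (in the $C^*$-algebra $A$) vanishes.

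The maps $\Psi$ and $\Phi$ are mutually inverse on the respective generating sets and hence everywhere. The main obstacle is relation (vi): the prefix subcase relies on the inverse semigroup identity $Q_u v = v Q_{uv}$, while the disjoint subcase hinges on the $C^*$-algebraic bootstrap from $P_{t_i} P_{t_j} = 0$ to $t_i^* t_j = 0$.
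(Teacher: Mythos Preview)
Your proof is correct and follows the same two-map strategy as the paper: construct mutually inverse homomorphisms $A\leftrightarrow\calq(\zeta_n)$ by checking each algebra satisfies the other's universal relations, with the only real work being the case analysis for relation~(vi). One small remark: your $C^*$-bootstrap from $P_{t_i}P_{t_j}=0$ to $t_i^*t_j=0$ is unnecessary, since $t_i^*t_j$ is already a word in the inverse semigroup and hence a partial isometry, so its source projection $(t_i^*t_j)^*(t_i^*t_j)$ is an honest projection that squares (through $P_{t_j}P_{t_i}=0$) to zero and therefore vanishes purely algebraically---this is essentially how the paper handles it, working directly with $P_{\Phi(x)}P_{\Phi(y)}$ and inverse-semigroup commutation rules.
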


\begin{proof}
Set $A= C^*(1, t_1,\ldots,t_n)$. In $A$, $1$ is a unit and the words in the letters $t_i$ form an inverse semigroup (where the inverse element should be the adjoint element in $A$, that is, inverse semigroup elements happen to be partial isometries); moreover $P_{t_i} P_{t_j} = 0$ for $i \neq j$. $A$ is universal with respect to these relations.
The freely generated semigraph $C^*$-algebra $\calq(\zeta_n)$ is a semigraph $C^*$-algebra (Lemma \ref{lemmaQuellSemigraphAlgebra}).
We define the homomorphism
$$\alpha:A \longrightarrow \calq(\zeta_n): \alpha(1) = \nu, \, \alpha(t_i) = s_i.$$
Since the generators $s_i$ form an inverse semigroup by Lemma \ref{semigraphwordrep},
$P_{s_i} P_{s_j} = 0$ by Definition \ref{ToeplitzgrahDef} (vi), and $\nu$ is a unit
in $\calq(\zeta_n)$ by Definition \ref{ToeplitzgrahDef} (iii) and the fact that $v s_i = s_i v = s_i$,
the map $\alpha$ is a well defined homomorphism.

We define the inverse homomorphism
$$\beta:\calq(\zeta_n) \longrightarrow A: \beta(\nu) = 1, \, \beta(s_{i_1} \ldots s_{i_m}) = t_{i_1} \ldots t_{i_m}.$$
$A$ is generated by $\beta(\{1,s_1,\ldots,s_n\})$, so $\beta$ is surjective. We need to show that the relations of Definition
\ref{ToeplitzgrahDef}, for $\beta(\zeta_n)$ rather than $\zeta_n$, hold in $A$.
Definition \ref{ToeplitzgrahDef} (iii) is satisfied in the image of $\beta$ as $\beta$ is multiplicative.
Definitions \ref{ToeplitzgrahDef} (i)-(ii) and (iv)-(v) are obviously also correct in the image of $\beta$.
Definition \ref{ToeplitzgrahDef} (vi) is
$$x^* y  =  \left \{
                                   \begin{array}{ll}
                                     \nu Q_x \nu = Q_x  & \mbox{ if $x = y$ } \\
                                      0 &   \mbox{ if $\zeta_n^{\rm (min)}(x,y)=\emptyset$} \\
                                      a Q_y \nu = a a^* x^* x a  &   \mbox{ if $xa=y$ for some $a \in \zeta_n$} \\
                                   \end{array}  \right .
$$
The first case is tautological, the third one reduces to a tautology in an inverse semigroup,
so holds in the image of $\beta$. The second case we demonstrate for $x = s_1 s_2$ and $y= s_1 s_3 s_4$, say.
One has $\beta(x)^* \beta(y)=\beta(x)^* \beta(x) \beta(x)^* \beta(y) \beta(y)^* \beta(y)=0$ since
\begin{eqnarray*}
\beta(x) \beta(x)^* \beta(y) \beta(y)^* &=& t_1 t_2 t_2^* (t_1^* t_1) t_3 t_4 t_4^* t_3^* t_1^*\\
&=&  t_1 t_2 t_2^*  t_3 t_4 t_4^* t_3^*  (t_1^* t_1) t_1^* = 0,
\end{eqnarray*}
where we have used inverse semigroup rules (commutativity of projections) and the fact that $P_{s_2} P_{s_3} = 0$.
Since all relations of Definition \ref{ToeplitzgrahDef} evidently hold in the image of $\beta$,
$\beta$ must be a well defined homomorphism.
This proves the lemma as $\alpha$ and $\beta$ are inverses to each other.
%
%
\end{proof}


\section{$K$-theory of weakly free semigraph $C^*$-algebras}

\label{SectionKTheoryWeaklyFreeSemigraph}

%

In this section we are going to compute the $K$-theory
of a weakly free semigraph $C^*$-algebra by an application of \cite[Theorem 2.2]{burgiKTheoryGaugeActions}.
Since the setting of \cite{burgiKTheoryGaugeActions} is somewhat lengthy, we do not recall
it here but directly apply it to semigraph algebras.

\begin{definition}  \label{defSourceCoverGenerators}
{\rm
For a semigraph algebra $X$ we say the {\em source projections cover the generators} if
for every $p \in \calp$ there is a $t\in \calt$ such that $p \le Q_t$.
}
\end{definition}

Let $X$ be a $k$-semigraph algebra. Assume that $k$ is finite,
the universal representation $X \longrightarrow C^*(X)$ is injective
(we shall regard $X$ as a subset of $C^*(X)$),
and that the source projections cover the generators
(Definition \ref{defSourceCoverGenerators}).
Define
$\cala_i=\set{t p}{t \in \calt^{(e_i)}, p \in \calp} \backslash \{0\}$
for $1 \le i \le k$.

\begin{lemma}    \label{lemmaPartitionA}
$X$ is generated by $\cala:=\cala_1 \cup \ldots \cup \cala_k$.
\end{lemma}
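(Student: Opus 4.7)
The plan is to verify that the generators $\calp$ and $\calt$ of $X$ both lie in the $*$-subalgebra $\langle \cala \rangle$, since by Definition \ref{DefSemigraphAlgebra} these already generate $X$. I would split this into the $\calt$-part and the $\calp$-part, handling $\calt$ first so it is available for the $\calp$ argument.

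For $\calt$, the key observation is that for any $t \in \calt^{(e_i)}$, the partial isometry identity $t = t Q_t$ together with $Q_t \in \calp$ (Corollary \ref{corollarySpannedStandardWords}(c)) immediately gives $t \in \cala_i$. For a general $t \in \calt$, the non-unitality of $\calt$ forces $d(t) \neq 0$, so the unique factorisation property (Definition \ref{Defsemigraph}) produces a decomposition $t = t_1 t_2 \cdots t_m$ with each $t_j \in \calt^{(e_{i_j})}$ for some coordinate $i_j$. Each factor lies in the corresponding $\cala_{i_j}$, so $t \in \langle \cala \rangle$.

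For a nonzero $p \in \calp$ (the case $p=0$ is trivial), the source-cover hypothesis yields $t \in \calt$ with $p \le Q_t$. I would then factor $t = u t_m$ where $u = t_1 \cdots t_{m-1}$ is a partial isometry in $X \subseteq C^*(X)$ (Lemma \ref{semigraphwordrep}(a)). Since $u^*u$ is a projection bounded by $1$, the positivity estimate
\[
Q_t \;=\; t_m^* u^* u\, t_m \;\le\; t_m^* t_m \;=\; Q_{t_m}
\]
holds in $C^*(X)$, and hence $p \le Q_{t_m}$. Consequently $t_m p \neq 0$, so $t_m p \in \cala_{i_m}$, and the product
\[
tp \;=\; t_1 t_2 \cdots t_{m-1} (t_m p)
\]
lies in $\langle \cala \rangle$ as a product of elements of $\cala$. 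The proof concludes with the identity
\[
p \;=\; p Q_t p \;=\; (tp)^*(tp) \;\in\; \langle \cala \rangle,
\]
using $Q_t p = p$.

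I do not foresee a substantive obstacle; the only point to handle with care is justifying $Q_t \le Q_{t_m}$, which is immediate once one passes to $C^*(X)$ via the injective universal representation and invokes the partial-isometry structure of the word set. Everything else is a direct unwinding of the definitions.
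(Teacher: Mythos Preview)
Your proof is correct and follows essentially the same approach as the paper's: factor $t \in \calt$ into degree-$e_i$ pieces using unique factorisation, and for $p \in \calp$ use the source-cover hypothesis to write $p = (tp)^*(tp)$ as a product of letters from $\cala$. Your extra positivity argument $Q_t \le Q_{t_m}$ in $C^*(X)$ is correct but not strictly needed, since $tp \neq 0$ (from $p = (tp)^*(tp) \neq 0$) already forces $t_m p \neq 0$; the paper simply omits this check.
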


\begin{proof}
Let $t \in \calt$.
Then
$t = t_1 \ldots t_n = (t_1
Q_{t_1}) \ldots  (t_n Q_{t_n})$
for certain $t_i \in \calt$ with $d(t_i) = e_{i_j}$.
So $t$ is a product of elements of $\cala$ as $Q_s \in \calp$ for any $s \in \calt$.
Let $p \in \calp$. Since the source projections cover the generators by assumption,
there is a $t \in \calt$ such that
$p = p Q_t = p t^*t p
= (t p)^* (t p)$.
This is a product of elements of $\cala$ again as we may write
$t p$ as $tp= t_1 (t_2 p)$ where $t_1,t_2 \in \calt$, $d(t_2)=e_i$, so $t_2 p \in \cala_{i}$, and $t_1$ may be further expanded as above.
\end{proof}

Note that by the unique factorisation property in $\calt$ every standard word $w$ may be written as
\begin{equation}  \label{standardrepresSemi}
w = a_1 \ldots a_n p b_m^* \ldots b_1^*
\end{equation}
for suitable letters $a_i,b_j$ in $\bigcup_{i=1}^k \calt^{(e_i)}$ and some $p \in \calp$.

Define $\calx$ to be $C^*(X)$. By Lemma \ref{lemmaPartitionA} $\calx$ is generated by a finitely partitioned alphabet $\cala$.
We have a gauge action (as defined in Definition \ref{definitionGaugaActions}) with respect to this alphabet on $\calx$
and consequently a degree map determined by $d(a_i) = e_i$ for $a_i \in \cala_i$.
We define $\cals$ to be the set of half-standard words.
Their range projections commute by Corollary \ref{corollaryInvSemigroup}.
The core is locally matrical by Corollary \ref{corollarySemigraphMatrixunits}.
We resolve the core by finite dimensional $C^*$-algebras as they are described in Corollary \ref{corollarySemigraphMatrixunits}, and in particular choose $D$ to be the set of all their diagonal entries.
Since the diagonal elements of these matrices of the core are expressible as direct sums of standard projections
(Corollary \ref{corollarySemigraphMatrixunits}),
and every standard projection can be written as a sum of range projections
of half-standard words (Lemma \ref{semigraphwordrep}),
we have $D \subseteq P$, where we set
\begin{eqnarray*}
P &:=& \lin_\Z \set{ P_x \in \calx}{x \in \cals}, \\
Q &:=& \alg^* \set{Q_x\in \calx }{x \in \cals}.
\end{eqnarray*}
We define $W'$ to be the set of standard words. They linearly span $X$ by Lemma \ref{corollarySpannedStandardWords}.
We have all requirements for \cite[Theorem 2.2]{burgiKTheoryGaugeActions}, except the technical conditions (a) and (b)
from \cite{burgiKTheoryGaugeActions}.

\begin{lemma} \label{proofcondab}
If $X$ is weakly free
then the technical conditions (a) and (b) from \cite{burgiKTheoryGaugeActions}
hold.
\end{lemma}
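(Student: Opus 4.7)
The plan is to verify the two technical conditions (a) and (b) of \cite[Theorem 2.2]{burgiKTheoryGaugeActions} by translating each of them into the semigraph language and then reducing to the non-absorption statement of Lemma \ref{lemmaKToeplitz}, which was designed precisely for this purpose. Both conditions are, for each coordinate $i \in \{1,\ldots,k\}$, non-vanishing or injectivity statements about how finite sums $\sum_{b \in \calb} P_b$ of range projections of elements $b \in \cala_i$ interact with the $i$-degree-zero part of $\calx$. The natural candidate for the degree-zero subalgebra in the semigraph setting is the fiber space of $X^{\backslash i}$, and the first step is to identify the objects appearing in (a) and (b) as (or replace them by) elements of that fiber space.

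Having made the translation, I would perform the reduction from $\cala_i$ to $\calt^{(e_i)}$: for any $b = tp \in \cala_i$ with $t \in \calt^{(e_i)}$ and $p \in \calp$ one has $P_{tp} = tpt^* \le tt^* = P_t$, so any relation of the form $x = \bigl(\sum_{b \in \calb} P_b\bigr) x$ with $\calb \subseteq \cala_i$ finite implies a corresponding relation $x = \bigl(\sum_{t \in \calb'} P_t\bigr) x$ with $\calb' \subseteq \calt^{(e_i)}$ finite. Lemma \ref{lemmaKToeplitz} then forces $x = 0$ whenever $x$ lies in the fiber space of $X^{\backslash i}$, and this is precisely what each of (a) and (b) require.

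Lemma \ref{lemmamini22} enters in the reduction whenever one must move a range projection $P_a$ with $a \in \calt^{(e_i)}$ past an $X^{\backslash i}$-word and produce the appropriate sum of range projections on the other side. This is exactly the sort of pushing argument used in the proof of Lemma \ref{lemmaKToeplitz} and is needed again to exhibit the elements appearing in (a) and (b) in the correct form before Lemma \ref{lemmaKToeplitz} can be applied.

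The main obstacle is not any single computation but the bookkeeping required to match the abstract framework of \cite{burgiKTheoryGaugeActions}, with its finitely partitioned alphabet $\cala$, gauge action, set $\cals$ of half-standard words, diagonal set $D$, and spanning set $W'$, to the concrete semigraph data $\calp, \calt, \cala_i, X^{\backslash i}$ used here. Once this matching is fixed, the verification of both (a) and (b) is just the reduction $P_{tp} \le P_t$ followed by a direct appeal to Lemma \ref{lemmaKToeplitz}.
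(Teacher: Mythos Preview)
Your proposal is correct and follows essentially the same route as the paper: the paper's own proof is deliberately terse, simply instructing the reader to imitate \cite[Lemma 2.4]{burgiKTheoryGaugeActions} using the standard-word representation (\ref{standardrepresSemi}) together with Lemmas \ref{lemmamini22}, \ref{lemmaKToeplitz}, and \ref{lemmaSourceProjHSword}. Your plan identifies the same key ingredients---the pushing lemma \ref{lemmamini22}, the reduction $P_{tp}\le P_t$ from $\cala_i$ to $\calt^{(e_i)}$, and the non-absorption Lemma \ref{lemmaKToeplitz}---so the only additions you might make explicit are the use of (\ref{standardrepresSemi}) to put words in the right shape and of Lemma \ref{lemmaSourceProjHSword} to ensure the relevant source projections land in $\calp$.
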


\begin{proof}
The proof is very similar (an almost word by word translation) of \cite[Lemma 2.4]{burgiKTheoryGaugeActions}.
So we ask the reader to prove it along the lines of \cite[Lemma 2.4]{burgiKTheoryGaugeActions}
by using representation (\ref{standardrepresSemi})
and Lemma \ref{lemmamini22}, Lemma \ref{lemmaKToeplitz} and Lemma \ref{lemmaSourceProjHSword}
where necessary.
\end{proof}

We have now all requirements for \cite[Theorem 2.2]{burgiKTheoryGaugeActions} which states the following.

\begin{theorem}[\cite{burgiKTheoryGaugeActions}, Theorem 2.2] \label{ktheorytheorem}
The identical embedding $\theta:C^*(Q) \longrightarrow \calx$ induces an isomorphism
$K_0(\theta)$, and $K_1(\calx) =0$.
\end{theorem}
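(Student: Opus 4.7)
The plan is to treat this statement as a direct application of \cite[Theorem 2.2]{burgiKTheoryGaugeActions}, so the whole task is to tabulate how each hypothesis of that theorem has been verified by the preceding material and then invoke it. Concretely, I would assemble the setup as follows: Lemma \ref{lemmaPartitionA} exhibits $\calx = C^*(X)$ as generated by the finitely partitioned alphabet $\cala = \cala_1 \cup \ldots \cup \cala_k$; the canonical gauge action of Definition \ref{definitionGaugaActions} equips $\calx$ with a $\T^k$-action whose degree map assigns $e_i$ to each letter of $\cala_i$; and the standard words $W'$ linearly span $X$ by Corollary \ref{corollarySpannedStandardWords}(a), each admitting the factorised form (\ref{standardrepresSemi}) via the unique factorisation property.

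Next I would identify the ambient combinatorial data $P$, $Q$, $D$, $\cals$ required by the cited theorem. The set $\cals$ of half-standard words has commuting range projections by Corollary \ref{corollaryInvSemigroup}, giving a commutative $P = \lin_\Z\{P_x \mid x \in \cals\}$; the algebra $Q = \mathrm{Alg}^*\{Q_x \mid x \in \cals\}$ is commutative as well. For the diagonal $D$ I would invoke Corollary \ref{corollarySemigraphMatrixunits}: the core is a net of finite-dimensional matrix algebras whose diagonal entries are finite sums of orthogonal standard projections, and by Lemma \ref{semigraphwordrep}(b) each standard projection decomposes as a sum of range projections of half-standard words, so $D \subseteq P$ as required.

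The only non-bookkeeping ingredient is verification of the technical conditions (a) and (b) of \cite{burgiKTheoryGaugeActions}; this is precisely the content of Lemma \ref{proofcondab}, which uses the weakly free hypothesis through Lemma \ref{lemmaKToeplitz} together with Lemma \ref{lemmamini22} and Lemma \ref{lemmaSourceProjHSword}, in parallel with \cite[Lemma 2.4]{burgiKTheoryGaugeActions}. With (a) and (b) in hand, every hypothesis of \cite[Theorem 2.2]{burgiKTheoryGaugeActions} is met.

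Finally I would simply conclude: \cite[Theorem 2.2]{burgiKTheoryGaugeActions} then asserts that the inclusion $\theta: C^*(Q) \hookrightarrow \calx$ induces an isomorphism on $K_0$ and that $K_1(\calx) = 0$. The main obstacle in this proof is, strictly speaking, already outsourced: it is the verification of conditions (a) and (b) in Lemma \ref{proofcondab}, where the weakly free hypothesis is used essentially to prevent range projections of degree-$e_i$ generators from absorbing a standard projection of $X^{\backslash i}$. Beyond that, the proof is a matter of matching the notation of \cite{burgiKTheoryGaugeActions} to the semigraph-algebra data set up above.
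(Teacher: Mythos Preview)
Your proposal is correct and mirrors the paper's approach exactly: the theorem is not proved in the paper at all but is cited directly from \cite[Theorem 2.2]{burgiKTheoryGaugeActions}, and the paper's preceding discussion (Lemma \ref{lemmaPartitionA}, the gauge action, Corollaries \ref{corollaryInvSemigroup}--\ref{corollarySemigraphMatrixunits}, Lemma \ref{semigraphwordrep}(b), and Lemma \ref{proofcondab}) is precisely the verification of hypotheses that you have tabulated. There is nothing to add.
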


By (\ref{freesemigraphP}) we have $Q = \lin(\calp)$. Since the projections $\calp$ commute,
we have
$K_0(C^*(Q)) = \ring(\calp)$,
where $\ring(\calp)$ denotes the subring of $C^*(X)$ generated by $\calp$, regarded then as an abelian group under addition.
Since $Q$ is a subset of the core, which is locally matrical, $C^*(Q)$ is an AF-algebra
and thus $K_1(C^*(Q))=0$.
Theorem \ref{ktheorytheorem} states that the $K$-theory of $C^*(X)$ is the $K$-theory
of $C^*(Q)$, which we have now.
%
%
Theorem \ref{ktheorytheorem}, Lemma \ref{proofcondab} and the above discussion now yield the following theorem.

\begin{theorem} \label{TheoremKTheory}
Let $X$ be a weakly free $k$-semigraph algebra
(with $k < \infty$)
whose universal representation $X \longrightarrow C^*(X)$
is injective.
Suppose that the source projections cover the generators.
Then the semigraph $C^*$-algebra has the following $K$-theory:
$$K_1(C^*(X)) = 0, \qquad
K_0(C^*(X)) \cong  \ring(\calp)$$
via
$[p] \longleftrightarrow p$
for $p \in \calp$.
($\ring(\calp)$ denotes the subring of $X$ generated by $\calp$, regarded then as an abelian group under addition.)
\end{theorem}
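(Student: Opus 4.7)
The plan is to assemble the theorem as a direct corollary of the preceding discussion, so really the task is to organise the preparatory lemmas into a clean deduction from \cite[Theorem 2.2]{burgiKTheoryGaugeActions}. Let me outline the steps I would follow.

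First, I would verify that all the data required by the machinery of \cite{burgiKTheoryGaugeActions} is in place. Lemma \ref{lemmaPartitionA} partitions a generating alphabet as $\cala = \cala_1 \sqcup \ldots \sqcup \cala_k$ (using that source projections cover the generators), and the gauge action of Definition \ref{definitionGaugaActions} provides the degree map with $d(a) = e_i$ for $a \in \cala_i$. For the auxiliary set $\cals$ I would take the half-standard words, whose range projections commute by Corollary \ref{corollaryInvSemigroup}; for the local matrix structure of the core I would invoke Corollary \ref{corollarySemigraphMatrixunits}, choosing $D$ to be the diagonal entries of the finite-dimensional subalgebras there, so $D \subseteq P := \lin_\Z\{P_x : x \in \cals\}$; and for $W'$, the standard words, which linearly span $X$ by Corollary \ref{corollarySpannedStandardWords}. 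The standard-word representation \eqref{standardrepresSemi} ensures that every word decomposes along $\cala$, and injectivity of the universal representation allows identification $X \subseteq \calx := C^*(X)$.

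Next, I would check the two technical conditions (a) and (b) of \cite[Theorem 2.2]{burgiKTheoryGaugeActions}, which is exactly the content of Lemma \ref{proofcondab}: weak freeness of $X$ translates into the hypotheses through Lemmas \ref{lemmamini22}, \ref{lemmaKToeplitz} and \ref{lemmaSourceProjHSword}, in parallel to \cite[Lemma 2.4]{burgiKTheoryGaugeActions}. With these verifications in hand, Theorem \ref{ktheorytheorem} applies and yields $K_1(\calx) = 0$ together with an isomorphism $K_0(C^*(Q)) \xrightarrow{K_0(\theta)} K_0(\calx)$ induced by the inclusion $\theta: C^*(Q) \hookrightarrow \calx$.

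It then remains to compute $K_0(C^*(Q))$. By the description \eqref{freesemigraphP}-style of $\calp$ and the defining formula for $Q$, one has $Q = \lin(\calp)$; since the elements of $\calp$ are mutually commuting projections, $Q$ is a commutative $*$-algebra and $C^*(Q) \subseteq$ core. Because the core is locally matricial by Corollary \ref{corollarySemigraphMatrixunits}, $C^*(Q)$ is an AF-algebra, so $K_1(C^*(Q)) = 0$ (consistent with $K_1(\calx)=0$) and $K_0(C^*(Q))$ is the abelian group of formal $\Z$-linear combinations of $\calp$ modulo the relations of the projection calculus in $\calx$; that is exactly $\ring(\calp)$ viewed as an additive group, with $[p] \leftrightarrow p$. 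Composing with $K_0(\theta)$ gives the stated isomorphism.

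The main obstacle is not really obstacle-shaped here: everything has been bundled into Lemma \ref{proofcondab} and Theorem \ref{ktheorytheorem}. The only nontrivial bookkeeping is confirming that the abstract hypotheses of \cite[Theorem 2.2]{burgiKTheoryGaugeActions}—partitioned alphabet, commuting range projections of $\cals$, local matriciality of the core with the diagonal set $D \subseteq P$, and the two technical conditions—are all produced by the semigraph results already in the paper; I would check these point by point rather than prove anything new.
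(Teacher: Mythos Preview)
Your proposal is correct and follows essentially the same route as the paper: assemble the hypotheses of \cite[Theorem~2.2]{burgiKTheoryGaugeActions} from the semigraph lemmas (partitioned alphabet via Lemma~\ref{lemmaPartitionA}, commuting range projections, local matriciality of the core, $D\subseteq P$, technical conditions (a),(b) via Lemma~\ref{proofcondab}), invoke Theorem~\ref{ktheorytheorem}, and then identify $K_0(C^*(Q))$ with $\ring(\calp)$ using $Q=\lin(\calp)$ (Corollary~\ref{lemmaSourceProjHSword}(c)) and the AF property of $C^*(Q)$. There is nothing to add or correct.
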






We are going to write $\ring(\calp)$ as a direct limit of subrings by using common refinements
of projections in $\calp$.
For each finite subset $\calq$ of $\calp$ we consider the subring $\ring(\calq)$ generated by $\calq$.
This ring is generated by the base elements
\begin{equation}   \label{exprQA}
p_{\calq,A} = \prod_{p \in A} p \prod_{q \in \calq \backslash A} (1-q),
\end{equation}
where $A$ is a nonzero subset of $\calq$.
Note that these base elements are mutually orthogonal for different $A$'s.
The projection $q_{\calq,A}$
is nonzero if and only if
\begin{equation}   \label{exprQA2}
\Big (\prod_{p \in A} p \Big ) (1-q) \neq 0
\end{equation}
for all $q \in \calq \backslash A$.
This follows from Lemma \ref{lemmaEquivToeplitzCond}.
Let $\hat{\calq}$ denote the family of all subsets $A$ of $\calq$ for which $p_{\calq,A}$ is nonzero.
Since the the $p_{\calq,A}$'s are mutually orthogonal, we have
$\ring(\calq) = \Z^{|\hat{\calq}|}$.
We write $\ring(\calp)$ as a direct limit
\begin{equation}  \label{formulaRingP}
\ring(\calp) = \underrightarrow{\lim}_{\calq \subseteq \calp} \ring(\calq) = \underrightarrow{\lim}_{\calq \subseteq \calp} \Z^{|\hat{\calq}|}.
\end{equation}

\section{$K$-theory of freely generated semigraph $C^*$-algebras}

\label{SectionKTheoryQuellSemigraph}

We aim now to apply Theorem \ref{TheoremKTheory} to the freely generated semigraph $C^*$-algebra.
To this end consider the freely generated semigraph algebra $X$ associated to a finitely aligned
$k$-semigraph $T$. We may go over to its image $X'$ in $\calq(T)$, and write again $X$ rather than $X'$ for simplicity. This semigraph algebra is weakly free
by Theorem \ref{TheoremSemigraphCStarToeplitz} and Lemma \ref{corollaryFreeCancelCstar}.
By (\ref{freesemigraphP}) it is clear that the source projections cover the generators.
We can thus apply Theorem \ref{TheoremKTheory} if $k$ is finite.
If $k$ is infinite then we write $\calq(T)$ as the direct limit
$\calq(T) \cong \underrightarrow{\lim}_{k_0} C^*(X^{(k_0)})$,
where $k_0$ runs over the finite subsets of $k$, and
$X^{(k_0)}$ denotes the sub-semigraph algebra of $X$
which is generated by all elements of $T$ which have degree zero at any coordinate outside of $k_0$
(same proof as Lemma \ref{lemmaXbacki}).
Again, in $X^{(k_0)}$
the source projections cover the generators
by (\ref{freesemigraphP}).
Since $X$ is weakly free, $X^{(k_0)}$ is also weakly free.
So we can apply Theorem \ref{TheoremKTheory} to each
$C^*(X^{(k_0)})$ and get
\begin{equation}   \label{K0calqt}
K_0(\calq(T)) = \underrightarrow{\lim}_{k_0} K_0( C^*(X^{(k_0)}) ) =  \underrightarrow{\lim}_{k_0} \ring(\calp^{(k_0)}) = \ring(\calp).
\end{equation}






\begin{lemma}
If $s,t \in T$ then in $\calq(T)$ we have
\begin{eqnarray}
&&Q_t = Q_s \quad \Longleftrightarrow \quad  s =t,\\
&&  Q_t < Q_s  \quad
\Longleftrightarrow \quad s< t,   \label{smallerprojection0} \\
&& Q_s Q_t < Q_t \quad \mbox{ if } \quad s \not \le t .   \label{finerQ} 
\label{smallerprojection}
\end{eqnarray}

\end{lemma}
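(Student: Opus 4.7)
The plan is to leverage the faithful representation $\varphi:\calq(T)\longrightarrow C^*_r(\Delta)$ from Theorem \ref{TheoremSemigraphCStarToeplitz} and test the source projections against a single well-chosen vector in $\ell^2(\Delta)$. Specifically, for each $t\in T$ consider $\delta_{s(t)\mu_{(t)}}$, where $s(t)\mu_{(t)}\in\Delta_\mu$ is well-defined because $t\cdot s(t)=t$ (Lemma \ref{lemmaidempotentsT}) witnesses $s(t)\le t$. The whole argument will reduce to verifying the key identity
$$\varphi(Q_s)\,\delta_{s(t)\mu_{(t)}}\;=\;\begin{cases}\delta_{s(t)\mu_{(t)}} & \text{if }s\le t,\\ 0 & \text{otherwise,}\end{cases}$$
for all $s,t\in T$.

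To establish this identity, compute $\lambda_s\delta_{s(t)\mu_{(t)}}=\delta_{(s\cdot s(t))\mu_{(t)}}$ via (\ref{identitymu}), provided the right-hand side defines an element of $\Delta$. Two conditions must hold: (a) the product $s\cdot s(t)$ must be defined in $T$, and (b) the resulting element must lie in $\Delta_\mu$. By Lemma \ref{lemmaidempotentsT}, condition (a) forces $s(s)=s(t)$ (the target idempotent of $s$ must coincide with the idempotent $s(t)$), in which case $s\cdot s(t)=s$; condition (b) then reads $s\le t$. Since $s\le t$ already implies $s(s)=s(t)$ via unique factorisation applied to $t=\alpha s$, both conditions collapse to the single requirement $s\le t$. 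Applying $\lambda_s^*$ via (\ref{identityVarphiTstar}) then returns $\delta_{s(t)\mu_{(t)}}$, completing the key identity.

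Given the key identity, all three assertions follow. For $Q_s=Q_t\Leftrightarrow s=t$, the forward direction yields both $s\le t$ and $t\le s$ by applying $\varphi$ to the test vectors for $t$ and for $s$, so Lemma \ref{lemmaOrderrel} gives $s=t$; the converse is trivial. For $Q_t<Q_s\Leftrightarrow s<t$, the direction ``$\Leftarrow$'' follows from writing $t=\alpha s$ and computing $Q_t=s^*Q_\alpha s\le s^*s=Q_s$, together with $Q_t\ne Q_s$ from the first equivalence; ``$\Rightarrow$'' uses that $Q_t\le Q_s$ combined with $\varphi(Q_t)\delta_{s(t)\mu_{(t)}}=\delta_{s(t)\mu_{(t)}}$ forces $\varphi(Q_s)\delta_{s(t)\mu_{(t)}}=\delta_{s(t)\mu_{(t)}}$, hence $s\le t$, and $s\ne t$ from the first equivalence then yields $s<t$. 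For the final assertion, $s\not\le t$ combined with the key identity gives $\varphi(Q_sQ_t)\delta_{s(t)\mu_{(t)}}=0\ne\delta_{s(t)\mu_{(t)}}=\varphi(Q_t)\delta_{s(t)\mu_{(t)}}$, so $Q_sQ_t\ne Q_t$; since $\calp$ is commutative and $Q_s$ is a projection, $Q_sQ_t\le Q_t$, yielding the strict inequality.

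The main obstacle is the careful bookkeeping in the key identity, where one must correctly reconcile the two definedness conditions for $s\cdot(s(t)\mu_{(t)})\in\Delta$ with the order relation $s\le t$. The delicate point is that condition (a) by itself only forces matching of sources and not $s\le t$, so condition (b) is genuinely needed; conversely, once $s\le t$ holds, the source match is automatic via unique factorisation. After this identity is verified, everything else reduces to routine applications of the faithfulness of $\varphi$.
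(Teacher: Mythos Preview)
Your proof is correct and follows essentially the same route as the paper's. The paper cites Lemma \ref{lemmaPalpha} (and implicitly the computation in its proof) to obtain precisely your ``key identity'' for the test vector $\delta_{\mu_t}=\delta_{s(t)\mu_{(t)}}$, whereas you re-derive that identity directly from the definition of $\Delta$ and the formulas (\ref{identitymu}) and (\ref{identityVarphiTstar}); after that, the deductions of the three equivalences are identical in both arguments.
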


\begin{proof}
If $Q_s = Q_t$ then
by Lemma \ref{lemmaPalpha}
$\varphi(Q_s) \delta_{\mu_s} = \delta_{\mu_s} = \varphi(Q_t) \delta_{\mu_s}$ and thus $t \le s$.
Similarly, $s \le t$. We conclude that $s=t$ by Lemma \ref{lemmaOrderrel}.

If $Q_t < Q_s$ then by Lemma \ref{lemmaPalpha}
$\varphi(Q_t) \delta_{\mu_t} = \delta_{\mu_t} = \varphi(Q_s) \delta_{\mu_t}$ and thus $s \le t$.
Since $s \neq t$, we have $s < t$.
For the reverse implication, if $s < t$ then $t = \alpha s$ for some $\alpha \in T$. So $Q_t \le Q_s$. Since $s \neq t$,
we have $Q_t < Q_s$.

Suppose $Q_t Q_s = Q_t$. Then by Lemma \ref{lemmaPalpha}
$\varphi(Q_t Q_s) \delta_{\mu_t} = \varphi(Q_t) \delta_{\mu_t} = \delta_{\mu_t}$.
Consequently, $\varphi(Q_s) \delta_{\mu_t} \neq 0$, which implies that
$s \le t$.
\end{proof}

\begin{theorem}  \label{theoremKTheoryQuell}
Let $T$ be a $k$-semigraph. Then the freely generated semigraph $C^*$-algebra $\calq(T)$ has the following
$K$-theory.
If $T$ is finite then $C^*(T)$ is a finite dimensional $C^*$-algebra and
\begin{equation}  \label{k0QuellTheoFiniteT}
K_0(\calq(T)) \cong  \Z^{|\hat{\calp}|},   \qquad
K_1(\calq(T)) = 0.
\end{equation}
If $T$ is countably infinite then
\begin{equation}     \label{k0QuellTheo}
K_0(\calq(T)) \cong  \bigoplus_\N \Z,  \qquad
K_1(\calq(T)) = 0.
\end{equation}
%
%
\end{theorem}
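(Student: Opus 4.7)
The plan is to combine Theorem~\ref{TheoremKTheory} with the direct-limit formula (\ref{K0calqt}) to identify $K_0(\calq(T))\cong\ring(\calp)$ and $K_1(\calq(T))=0$, and then analyze the abelian group $\ring(\calp)$ via its presentation (\ref{formulaRingP}) as a direct limit of finitely generated free abelian groups. The hypotheses of Theorem~\ref{TheoremKTheory} are already in force: by Theorem~\ref{TheoremSemigraphCStarToeplitz} together with Lemma~\ref{corollaryFreeCancelCstar}, the image of $X$ inside $\calq(T)$ is weakly free, and by (\ref{freesemigraphP}) the source projections cover the generators. In the finite case, $\calt_1=\calt\sqcup\{1\}$ is finite and $\calp$ is finite, since its nonzero elements are the commuting idempotent products $Q_{t_1}\cdots Q_{t_n}$, which depend only on the subset $\{t_1,\ldots,t_n\}\subseteq T$. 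By Corollary~\ref{corollarySpannedStandardWords} this makes $X$ finite-dimensional, hence $\calq(T)=C^*(X)$ is a finite-dimensional $C^*$-algebra. Specializing (\ref{formulaRingP}) to $\calq=\calp$ yields $\ring(\calp)\cong\Z^{|\hat{\calp}|}$, proving (\ref{k0QuellTheoFiniteT}).

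For countably infinite $T$ the set $\calp$ is countable, so I fix an increasing exhaustion $\calq_1\subseteq\calq_2\subseteq\cdots$ of $\calp$ by finite subsets, making (\ref{formulaRingP}) read $\ring(\calp)=\varinjlim_n\Z^{|\hat{\calq_n}|}$. The decomposition
\[
p_{\calq_n,A}=\sum_{B\subseteq\calq_{n+1}\setminus\calq_n,\,A\cup B\in\hat{\calq_{n+1}}} p_{\calq_{n+1},A\cup B}
\]
shows that each transition map sends a basis vector to a sum of \emph{distinct} basis vectors. This makes every $\ring(\calq_n)$ a pure subgroup of $\ring(\calp)$: if $mx\in\ring(\calq_n)$ with $x\in\ring(\calq_{n'})$ for some $n'\ge n$, comparison of coordinates in the $\hat{\calq_{n'}}$-basis shows that the coefficients of $x$ depend only on the $\calq_n$-intersection and are all divisible by $m$, so already $x\in\ring(\calq_n)$. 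Consequently, any finite-rank subgroup $H\le\ring(\calp)$ lies inside $(H\otimes\Q)\cap\ring(\calp)\subseteq\ring(\calq_{n_0})$ for $n_0$ chosen so that a maximal $\Z$-independent subset of $H$ is contained in $\ring(\calq_{n_0})$; in particular $H$ is finitely generated. Pontryagin's criterion then yields that the countable torsion-free abelian group $\ring(\calp)$ is free.

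It remains to compute the rank. By the lemma preceding the theorem, $t\mapsto Q_t$ is injective, so $\calp$ contains infinitely many distinct projections. If $\ring(\calp)$ had finite rank it would coincide with some $\ring(\calq_{n_0})\cong\Z^{|\hat{\calq_{n_0}}|}$, whose only idempotents are the finitely many sums of subsets of its orthogonal-atom basis; this contradicts the presence of infinitely many distinct projections in $\calp$. Hence the rank is countably infinite and $\ring(\calp)\cong\bigoplus_\N\Z$, establishing (\ref{k0QuellTheo}). The main obstacle lies in the freeness step: a countable torsion-free abelian group that is a direct limit of free abelian groups need not be free (witness $\Q=\varinjlim_n\Z$), so one must exploit the specific partition-refinement shape of the transition maps to verify purity and apply Pontryagin's criterion.
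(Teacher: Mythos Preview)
Your argument is correct, but it follows a genuinely different path from the paper's proof in the infinite case. The paper works concretely inside $T$: it exhibits explicit finite families $\calq_n\subseteq\calp$ built from chains $Q_{t(0,i\delta_l)}$ (when some degree coordinate is unbounded) or from $Q_{t_i}$ with $d(t_i)=\delta_i$ (when $k$ itself is infinite), and uses relation~(\ref{finerQ}) to verify that each $\calq_m\in\hat\calq_n$, so $|\hat\calq_n|\ge n$. From this and the partition-refinement shape of the transition maps in~(\ref{formulaRingP}) it concludes $\ring(\calp)\cong\bigoplus_\N\Z$. Your proof, by contrast, avoids any explicit semigraph construction: you observe once and for all that the transition maps send basis vectors to disjoint sums of basis vectors, deduce purity of each $\ring(\calq_n)$ in $\ring(\calp)$, and then invoke Pontryagin's criterion to obtain freeness. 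For the rank you need only the first equivalence of the preceding lemma ($t\mapsto Q_t$ injective) rather than the finer inequality~(\ref{finerQ}). Your route is more structural and would apply verbatim to any direct limit of this partition-refinement type, at the cost of importing a classical theorem from abelian group theory; the paper's route is more hands-on and exhibits where the rank growth actually comes from in the semigraph, but is terser about why rank growth alone forces $\ring(\calp)\cong\bigoplus_\N\Z$.
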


\begin{proof}
We have already obtained (\ref{K0calqt}).
We aim to analyse $\ring(\calp)$ further. By (\ref{formulaRingP}) we need to estimate the size of the set $\hat \calq$ for a finite subset $\calq$ of $\calp$.

Suppose $T$ is finite. Then the set of standard words is a finite set.
Since they span $\calq(T)$, $\calq(T)$ is finite dimensional.
(\ref{k0QuellTheoFiniteT}) follows from (\ref{formulaRingP}).

Now suppose that $T$ is countably infinite.
Suppose $t \in T$ with $d(t)=n \delta_l$ for an $n \in \N$ and some $l \in k$.
Then set
$\calq_m= \{Q_{t(0,i \delta_l)}|\, 1 \le i \le m\}$
for $1 \le m \le n$.
Note that $\calq_m \subseteq \calq_n$.
Then $p_{\calq_n,\calq_m}$ of (\ref{exprQA}) is nonzero.
Indeed, for every $m < j \le n$ (\ref{exprQA2}) is
\begin{equation}
\Big (\prod_{i=1}^m Q_{t(0,i \delta_l)} \Big) (1-Q_{t(0,j \delta_l)}) = Q_{t(0,m \delta_l)} (1-Q_{t(0,j \delta_l)}),
\end{equation}
which is nonzero by (\ref{finerQ}).
Thus every $\calq_m$ is an element of $\hat \calq_n$. Consequently, $|\hat \calq_n| \ge n$ (or even exactly $n$ as one may check).
So, if $k$ is finite then we will find a sequence of $t$'s with $d(t)_{l} \rightarrow \infty$ for some coordinate $l \in k$ and so
%
(\ref{formulaRingP}) shows (\ref{k0QuellTheo}).

So suppose finally the case that $k=\N$ and there is an infinite family
of $t_i$'s such that $d(t_i)=\delta_i$ for all $i \in \N$.
Set $\calq_n = \{Q_{t_1}, \ldots, Q_{t_n}\}$ for $n \in \N$.
Set $A_m = \{Q_{t_m}\}$ for $1 \le m \le n$. Then $A_m \in \hat \calq_n$ since $p_{\calq_n,A_m}$ is nonzero.
Indeed,
(\ref{exprQA2}) is
$Q_{t_m} (1- Q_{t_j})$,
which is nonzero by (\ref{finerQ}).
So we have $|\hat \calq_n| \ge n$.
Again, (\ref{formulaRingP}) shows (\ref{k0QuellTheo}).
\end{proof}

\section{Concluding remarks}  \label{SectionConcludingRemarks}

In this section we remark which known results are generalised by this paper
and include some examples.
This was suggested by one of the referees.
Moreover, we shall revisit the definition of freely generated semigraph algebras.

Weakly freeness (see Definition \ref{defKToeplitz}) and freeness (see Definition \ref{defToeplitz})
generalise the Toeplitz conditions of Toplitz Cuntz algebras \cite{cuntz}, Toeplitz Cuntz--Krieger algebras \cite{cuntzkrieger},
finitely aligned higher rank Toeplitz graph algebras \cite{raeburnsims}
and higher rank Exel--Laca algebras under condition (II) \cite{burgiKTheoryGaugeActions} to the setting of higher rank semigraph algebras;
once directly, in that 
these Toeplitz versions of generalised Cuntz--Krieger algebras satisfy the freeness
condition, and on the other hand in a 
new way by covering algebras like the freely generated
semigraph algebras associated to a $k$-semigraph (Definition \ref{ToeplitzgrahDef}), which have not appeared in the literature
so far.
The assertion that freeness implies cancellation (Proposition \ref{PropToeplitzAmenability}), and so the validity of
a Cuntz--Krieger uniqueness theorem, recovers the corresponding uniqueness theorems for Toeplitz
Cuntz--Krieger algebras \cite{cuntzkrieger} and higher rank Toeplitz graph algebras \cite{raeburnsims}.

The $K$-theory computation of Theorem \ref{TheoremKTheory} generalises the $K$-theory computation of the higher rank Toeplitz
graph algebras and higher rank Exel--Laca algebras under condition (II) in \cite{burgiKTheoryGaugeActions};
these results, on the other hand, generalise Cuntz' computation of the $K$-theory of the Toeplitz
Cuntz--Krieger algebras in \cite{cuntz3}.
The $K$-theory computation of Theorem \ref{TheoremKTheory} 
may also apply to possible Toeplitz versions of ultragraph algebras \cite{tomforde}
and $C^*$-algebras of labelled graphs \cite{batespask}.
The computation of the $K$-theory of the freely generated semigraph algebras in Theorem \ref{theoremKTheoryQuell}
is new 
as these algebras have not been considered before.

Nevertheless, freely generated semigraph algebras are quite natural generalised Cuntz--Krieger algebras
associated to higher rank graphs. Let us revisit the axiomatic relations of a freely generated semigraph algebra (see Definition \ref{ToeplitzgrahDef}) for a
given $k$-graph $T$ which, for simplicity, we assume receives and emits only finitely many ($1 \le n < \infty$) edges in each vertex.
The relations are motivated by the following rules. (a) The set of $*$-words should form an inverse semigroup of partial isometries.
(b) The range projections $t t^*$ of a fixed degree $d(t)$ should
be mutually orthogonal for $t \in T$. (c) The sum $\sum_{t \in T, d(t)=n} t t^*$ should form a kind of a one-sided unit for sufficiently
many words 
(explained below).

The relations Definition \ref{ToeplitzgrahDef}.(i)-(v) are natural as to imply rule (a). Definition \ref{ToeplitzgrahDef}.(vi)
is driven by rules (b) and (c). How should we define the product $x^* y$ for $x,y \in T$? By rule (c)
we multiply from both sides with a unit, and end up, after applying the orthogonality rule (b), with
$$x^* y = \sum_{e \in T, \,|e|=|x| \vee |y|-|x|} \,\,\sum_{f \in T,\, |f|=|x| \vee |y|-|y|} e e^*  x^* y f f^*
= \sum_{xe = yf} e Q_{yf} f^*.$$
In Cuntz--Krieger algebras and graph algebras one requires that the source projection $Q$ is absorbed by range projections
$P$
(or they are orthogonal),	
and so the middle term $Q_{yf}$ would vanish. For freely generated semigraph algebra
we impose no further relations between the range and source projections. 
These considerations recover Definition \ref{ToeplitzgrahDef}.(vi), and we are done.

%

In the diagram below we visualise the relationship between source projections, indicated as empty circles, of emitting edges and
the range projections, indicated as filled circle, of receiving edges.
We see how the connection between source and range projections alter from graph algebras over Toeplitz algebras to freely
generated semigraph algebras. The two narrow empty circles in the ``Free" diagram should indicate the source projections
of two edges with a common source; the third circle corresponds to another source. The range projections are no longer
completely within the source projections, and the source projections become abelianly free among each other.
Nevertheless, the basic connection $Q_s P_t = 0 \Leftrightarrow st \in T$ holds in all three types of graph algebras
as a link between range and source projections.

%
%
%
%

\vspace{.2cm}
\begin{center}

\begin{picture}(0, 0)(141,0)

\put(20,0){\circle{26}}

\put(20,7.5){\circle*{13}}
\put(26.495,-3.75){\circle*{13}}
\put(13.504,-3.75){\circle*{13}}

\put(20,7.5){\line(0,1){11}}
\put(26.495,-3.75){\line(2,-1){11}}
\put(13.504,-3.75){\line(-2,-1){11}}

\put(60,0){\circle{26}}

\put(60,7.5){\circle*{13}}
\put(66.495,-3.75){\circle*{13}}
\put(53.504,-3.75){\circle*{13}}

\put(60,7.5){\line(0,1){11}}
\put(66.495,-3.75){\line(2,-1){11}}
\put(53.504,-3.75){\line(-2,-1){11}}

\put(23,-30){Graph}	

\put(120,0){\circle{26}}

\put(120,7.5){\circle*{13}}
\put(113.504,-3.75){\circle*{13}}

\put(120,7.5){\line(0,1){11}}
\put(113.504,-3.75){\line(-2,-1){11}}

\put(160,0){\circle{26}}

\put(160,7.5){\circle*{13}}
\put(166.495,-3.75){\circle*{13}}

\put(160,7.5){\line(0,1){11}}
\put(166.495,-3.75){\line(2,-1){11}}

\put(120,-30){Toeplitz}	

\put(230,0){\circle{26}}

\put(220,7.5){\circle*{13}}
\put(213.504,-3.75){\circle*{13}}

\put(220,7.5){\line(-1,1){11}}
\put(213.504,-3.75){\line(-2,-1){11}}

\put(250,0){\circle{26}}
\put(252,-2){\circle{26}}

\put(260,7.5){\circle*{13}}
\put(266.495,-3.75){\circle*{13}}

\put(260,7.5){\line(1,1){11}}
\put(266.495,-3.75){\line(2,-1){11}}



\put(230,-30){Free}		

\end{picture}
\end{center}

\vspace{1.0cm}

The $K$-theory is not a good instrument to distinguish freely generated semigraph algebras. However, we have
a concrete model by partial isometries acting on a 
concrete Hilbert space, namely $C^*_r(\Delta)$.
So we 
focus on some examples of $k$-semigraphs.

\begin{itemize}
\item[(i)]
Typical examples of higher rank semigraphs are higher rank graphs $\Lambda$, the cut-down graph
$\Lambda^{(\le n)} = \{\lambda \in \Lambda|\, d(\lambda)\le n\}$ 
and $\Lambda/\Lambda^{(0)} \sqcup \{1\}$.

\item[(ii)]
Moreover, we may take any subset $\Lambda'$ of a $k$-graph $\Lambda$ and close it to a $k$-semigraph
$\overline{\Lambda'} := \{\lambda(m,n) \in \Lambda|\, \lambda \in \Lambda',\, 0 \le m\le n\le d(\lambda)\}$.

\item[(iii)]
For a shift of finite type with finite alphabet $\cala$ and finite forbidden word set $W \subset \cala^*$
the set $(\cala^*)_W$ of finite allowed words is a $1$-semigraph under concatenation and degree map being the length function
for words, which usually is not a $1$-graph.
Similarly as for the Cuntz--Krieger algebras (where every word in $W$ has length two), we may consider the one-sided shift $(\cala^\N)_W$ and consider
the homomorphism $\phi: \calq((\cala^*)_W) \rightarrow B(\ell^2((\cala^\N)_W))$ induced by shifting.
Note that for a sufficiently large $N$, every projection $\phi(Q_\lambda)$ with $d(\lambda)=N$ is the sum of some projections $\phi(P_\mu)$ with $d(\mu)=N$.
Hence, in $\calq((\cala^*)_W)$, we divide out the ideal $I$ generated by pulling back the relations $\phi(Q_\lambda P_\mu)= 0$ (if true) and $\phi(Q_\lambda P_\mu)=\phi(P_\mu)$ (if true)
for all $\lambda,\mu \in (\cala^*)_W$,
and the single relation 
$\sum_{\alpha \in \cala} P_\alpha = 1$ (this relation is also related to the full semigraph algebra
discussed in \cite{burgiSemigraphI}).
The quotient $\calq((\cala^*)_W)/I$ might be a good model of a Cuntz--Krieger algebra associated to the shift of finite type $(\cala,W)$.
Since we can shift-equivalently change any shift of finite type to another one with forbidden word length two, and so associate
a classical Cuntz--Krieger algebra, this example may not be completely helpful. 
%
But it demonstrates higher flexibility 
by considering semigraphs instead of graphs,
and 
by allowing more freedom in defining relations between source and range projections of generators.
\end{itemize}

%
%
%
%


\bibliographystyle{plain}
\bibliography{references}

\begin{thebibliography}{10}

\bibitem{batespask}
T.~Bates and D.~Pask.
\newblock {$C^*$-algebras of labelled graphs.}
\newblock {\em J. Oper. Theory}, 57(1):207--226, 2007.

\bibitem{brenkenniu}
B.~Brenken and Z.~Niu.
\newblock {The $C^*$-algebra of a partial isometry.}
\newblock {\em Proc. Am. Math. Soc.}, pages 1--8, 2011.

\bibitem{burgiKTheoryGaugeActions}
B.~Burgstaller.
\newblock {The $K$-theory of certain $C^*$-algebras endowed with gauge
  actions}.
\newblock {\em J. Funct. Anal.}, 256(6):1693--1707, 2009.

\bibitem{burgiSemigraphI}
B.~Burgstaller.
\newblock {A Cuntz--Krieger uniqueness theorem for semigraph $C^*$-algebras}.
\newblock {\em Banach J. Math. Anal.}, 6(2):38--57, 2012.

\bibitem{chojorgensen}
I.~Cho and P.~Jorgensen.
\newblock {$C^*$-subalgebras generated by partial isometries.}
\newblock {\em J. Math. Phys.}, 50(2), 2009.

\bibitem{cuntz}
J.~Cuntz.
\newblock {Simple $C^{*}$-algebras generated by isometries}.
\newblock {\em Commun. Math. Phys.}, 57:173--185, 1977.

\bibitem{cuntz3}
J.~Cuntz.
\newblock {$K$-theory for certain $C^*$-algebras}.
\newblock {\em Annals of Math.}, 113:181--197, 1981.

\bibitem{cuntzkrieger}
J.~Cuntz and W.~Krieger.
\newblock {A class of $C^{*}$-algebras and topological Markov chains}.
\newblock {\em Invent. Math.}, 56:251--268, 1980.

\bibitem{kumjianpask}
A.~Kumjian and D.~Pask.
\newblock {Higher rank graph $C^*$-algebras}.
\newblock {\em New York J. Math.}, 6:1--20, 2000.

\bibitem{raeburnsims}
I.~Raeburn and A.~Sims.
\newblock {Product systems of graphs and the Toeplitz algebras of higher-rank
  graphs.}
\newblock {\em J. Oper. Theory}, 53(2):399--429, 2005.

\bibitem{tomforde}
M.~Tomforde.
\newblock {A unified approach to Exel-Laca algebras and $C^*$-algebras
  associated to graphs}.
\newblock {\em J. Oper. Theory}, 50(2):345--368, 2003.

\bibitem{1181.46047}
A.~S. Toms.
\newblock {On the classification problem for nuclear $C^*$-algebras.}
\newblock {\em Ann. Math.}, 167(3):1029--1044, 2008.

\end{thebibliography}

\end{document}